\theoremstyle{plain}
\newtheorem{theorem}{Theorem}
\newtheorem{lemma}[theorem]{Lemma}
\newtheorem{proposition}[theorem]{Proposition}
\newtheorem{corollary}[theorem]{Corollary}
\numberwithin{theorem}{section}
\numberwithin{equation}{theorem}
\theoremstyle{definition}
\newtheorem{definition}[theorem]{Definition}
\newtheorem{example}[theorem]{Example}
\newtheorem{question}[theorem]{Question}
\newtheorem*{question*}{Question}
\newcommand{\Z}{\mathbb{Z}}
\newcommand{\kk}{\Bbbk}
\DeclareMathOperator{\Frac}{Frac}
\DeclareMathOperator{\Mod}{Mod}
\DeclareMathOperator{\Aut}{Aut}
\DeclareMathOperator{\gr}{gr}
\DeclareMathOperator{\moc}{c}
\DeclareMathOperator{\CompS}{CompS}
\DeclareMathOperator{\CompM}{CompM}
\begin{document}

\title{Relative Dixmier property for Poisson algebras}

\author{H.-D. Huang, Z. Nazemian, 
X. Tang, X.-T. Wang, Y.-H. Wang, and J.J. Zhang}

\address{Hongdi Huang: Department of Mathematics, Shanghai University,
Shanghai, 20244, China}

\email{hdhuang@shu.edu.cn}

\address{Zahra Nazemian: University of Graz, Heinrichstrasse 36, 8010 Graz, Austria}

\email{zahra.nazemian@uni-graz.at}

\address{Xin Tang: Department of Mathematics \& Computer Science, 
Fayetteville State University, Fayetteville, NC 28301,
USA}

\email{xtang@uncfsu.edu}

\address{Xingting Wang: 
Department of Mathematics, Louisiana State University, 
Baton Rouge, Louisiana 70803, USA} 

\email{xingtingwang@math.lsu.edu}

\address{Yanhua Wang: School of Mathematics, Shanghai Key Laboratory of Financial Information Technology, Shanghai University of Finance and Economics, Shanghai 200433, China}

\email{yhw@mail.shufe.edu.cn}

\address{James J. Zhang: Department of Mathematics, Box 354350,
University of Washington, Seattle, Washington 98195, USA}

\email{zhang@math.washington.edu}

\begin{abstract}
Dixmier property concerns the bijectivity of endomorphisms for algebras. We introduce a relative Dixmier property, which is a generalization of the Dixmier property. This new concept has applications in proving that several classes of Poisson algebras possess the Dixmier property, as well as in other topics such as the cancellation problem and the non-existence of Hopf coactions. 
\end{abstract}

\subjclass[2020]{Primary 17B63, 17B40, 16S36, 16W20}

\keywords{Dixmier property, Poisson algebra, $u$-invariant}

%\thanks{ }

%\date{\today}
\maketitle

%\tableofcontents

% \setcounter{section}{-1}
\section*{Introduction}
\label{xxsec0}

Poisson algebra has been an active research area in recent years. Various topics related to Poisson algebras have been studied. In particular, there have been significant developments on twisted Poincar{\' e} duality and the modular derivation, Poisson Dixmier-Moeglin equivalence, Poisson enveloping algebras, Poisson spectrum, and Poisson valuations, see papers \cite{BLSM, Goo1, GL, HTWZ1, HTWZ2, JO, LS, LuWW1, LuWW2, LvWZ} and the references therein. 

Let $\Bbbk$ be a base field and assume that everything is over $\Bbbk$. A Poisson $\Bbbk$-algebra $A$ is said to satisfy the {\it Dixmier property} (or simply $A$ is {\it Dixmier}) if every injective Poisson $\Bbbk$-algebra morphism $f\colon A\longrightarrow A$ is bijective. This property is related to the well-known Dixmier conjecture \cite{Dix}, which states that every endomorphism of the $n$-th Weyl algebra over a field of characteristic zero is bijective. The Dixmier conjecture is proved to be stably equivalent to the infamous Jacobian conjecture  \cite{BCW, BK, Tsu1, Tsu2}. By \cite{AvdE}, these two conjectures are further equivalent to 
the Poisson conjecture, which states that every Poisson endomorphism of the Poisson Weyl algebra $W_n$ [see Example \ref{xxexa1.1}] over a field of characteristic zero is bijective. Basically, the Poisson conjecture asserts that $W_n$ satisfies the Dixmier property. 

Note that the Dixmier conjecture still remains wide open. Though it seems rare for a given algebra to satisfy the Dixmier property, this property is important to study. As a matter of fact, the Dixmier property has been extensively studied for several classes of associative algebras of a `quantum nature', see the papers \cite{AW, KL1, KL2, Ric, Tan1, Tan2, Tan3}. Motivated by the close relationship between `quantum algebras' and their semiclassical limits \cite{Goo2}, it is natural to study the Dixmier property for Poisson algebras. In this paper, we focus on the Dixmier property in the Poisson setting. From now on, we will assume that the base field $\Bbbk$ is of characteristic zero.

There have been examples of Poisson algebras which satisfy the Dixmier property \cite[Theorem 3.7]{CO}. In \cite[Theorem 0.9]{HTWZ1}, it is also proved that the following Poisson algebra $P_{\Omega-\xi}$ is Dixmier.

\begin{example}
\label{xxexa0.1}
Let $\Omega$ be a homogeneous element in 
$A:=\Bbbk[x,y,z]$ with Adams grading defined by $\deg x=\deg y=\deg z=1$. We define a Poisson structure on $A$ by 
$\{x,y\}=\frac{\partial \Omega}{\partial z}  =: \Omega_{z}$,
$\{y,z\}=\frac{\partial \Omega}{\partial x}  =: \Omega_{x}$,
and
$\{z,x\}=\frac{\partial \Omega}{\partial y}  =: \Omega_{y}$.
Equivalently, the Poisson bracket on $A$ is 
defined by
\begin{equation}\label{F0.1.1}
\tag{F0.1.1}
\{f,g\}=\det \begin{pmatrix} 
f_{x} & f_{y} & f_{z}\\
g_{x} & g_{y} & g_{z}\\
\Omega_{x} & \Omega_{y} & \Omega_{z}
\end{pmatrix}\quad 
\text{for all $f,g\in \Bbbk[x, y, z]$}.
\end{equation}
This Poisson algebra is denoted by 
$A_{\Omega}$ and $\Omega$ is called the 
{\it potential} of $A_{\Omega}$. 
It is easy to check that $\Omega$ is a
Poisson central element. For every 
$\xi\in \Bbbk$, we define a Poisson factor
algebra $P_{\Omega-\xi}:=
A_{\Omega}/(\Omega-\xi)$. We say a homogeneous 
element $\Omega\in A$  
{\it has an isolated singularity at the origin} 
{\rm{(}}or simply {\it has an isolated 
singularity}{\rm{)}} if $A_{sing}:=
A/(\Omega_x,\Omega_y,\Omega_z)$ is finite 
dimensional over $\Bbbk$. By commutative 
algebra, $\Omega$ has an  isolated singularity 
if and only if $\{\Omega_x, \Omega_y, \Omega_z\}$
is a regular sequence in the polynomial ring
$A$. In this case, we say $\Omega$ is an {\it i.s. potential} where i.s. stands for isolated singularity. Note that a generic homogeneous element of degree $\geq 2$ has an isolated singularity. By \cite[Theorem 0.9]{HTWZ1}, if $\Omega$ is an i.s. potential of degree $\geq 5$, then for every $\xi\in\Bbbk$, $P_{\Omega-\xi}$ is Dixmier.
\end{example}

The proof of $P_{\Omega-\xi}$ being Dixmier is quite involved and uses some deep analysis 
of Poisson valuations. On the other hand, one is also wondering if the tensor products of $P_{\Omega-\xi}$ 
are also  Dixmier. In this paper, we will explore additional ideas that may help us better understand the Dixmier property. The following is one of the main results. For any Poisson $\Bbbk$-algebra $A$, we let $\Aut_{Poi.alg}(A)$
denote the group of all Poisson $\Bbbk$-algebra automorphisms of $A$.

\begin{theorem}
\label{xxthm0.2}
Let $\Omega_1,\cdots,\Omega_d$ be a set of i.s. potentials of distinct Adams degrees with 
$\deg \Omega_i\geq 5$ for all $i$. Let $\xi_1,\cdots,\xi_d$ be a set of scalars in 
$\Bbbk$. Let $P$ be the Poisson algebra 
$P_{\Omega_1-\xi_1}\otimes \cdots \otimes 
P_{\Omega_d-\xi_d}$ and let $Q$ be the fractional Poisson field of $P$. The following statements are true.
\begin{enumerate}
\item[(1)]
Both $P$ and $Q$ are Dixmier.
\item[(2)]
The groups $\Aut_{Poi.alg}(P)$ and $\Aut_{Poi.alg}(Q)$ are finite.
\item[(3)]
The Poisson algebra $P$ is simple if and only if $\xi_{i}\neq 0$ 
for all $i=1,\cdots,d$.
\end{enumerate}
\end{theorem}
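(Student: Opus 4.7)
My plan is to leverage the relative Dixmier property introduced earlier in the paper to reduce each assertion to the single-factor results established in \cite[Theorem 0.9]{HTWZ1} for $P_{\Omega_i-\xi_i}$. The structural input that separates the factors inside $P$ is the Adams filtration: since the $\deg\Omega_i$ are pairwise distinct, the Poisson central elements $\Omega_1,\ldots,\Omega_d$ coming from the individual factors live in different filtration levels, and this rigidity will force any Poisson endomorphism of $P$ to respect the tensor decomposition. I would equip $P$ with the tensor Adams filtration coming from the gradings on each $A_{\Omega_i}$, which descends to $P_{\Omega_i-\xi_i}$ since $\Omega_i-\xi_i$ is inhomogeneous only by a scalar term.

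For statement (1), the key step is to show that an injective Poisson endomorphism $f\colon P\to P$ sends each subfactor $P_{\Omega_i-\xi_i}$ into itself, after which the one-factor Dixmier property concludes the argument. I would induct on $d$, isolating one factor $P_{\Omega_1-\xi_1}$ and showing that the relative Dixmier property for the extension $\bigl(\bigotimes_{i\geq 2}P_{\Omega_i-\xi_i}\bigr)\subseteq P$ reduces the problem to Dixmier for the base tensor product, which is the inductive hypothesis. The distinct-degree hypothesis is what allows one to recognize the base subalgebra inside $P$ after applying $f$, by tracking the central elements $\Omega_i$ under $f$ using their distinct Adams degrees. The same template handles the fractional Poisson field $Q$, but with the filtration replaced by Poisson valuations in the spirit of \cite[Theorem 0.9]{HTWZ1}; the valuations detect each factor-field $\Frac(P_{\Omega_i-\xi_i})$ inside $Q$.

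Statement (2) follows from the same rigidity: any Poisson automorphism $\sigma$ of $P$ preserves each tensor factor individually because the distinct Adams degrees prevent the factors from being swapped. Therefore $\Aut_{Poi.alg}(P)$ embeds into $\prod_i\Aut_{Poi.alg}(P_{\Omega_i-\xi_i})$, which is a product of finite groups by the one-factor finiteness in \cite[Theorem 0.9]{HTWZ1}; the same argument handles $Q$. Statement (3) splits into two directions. The `only if' direction is routine: if $\xi_i=0$ for some $i$, then $\Omega_i\in P$ is a nonzero Poisson central non-unit, so $(\Omega_i)$ is a proper nonzero Poisson ideal. For the `if' direction, I would first note that each $P_{\Omega_i-\xi_i}$ with $\xi_i\neq 0$ is Poisson simple with Poisson center $\Bbbk$, and then run the standard central-simple tensor argument: a hypothetical nonzero proper Poisson ideal, expanded along a $\Bbbk$-basis of one factor, would produce a nonzero Poisson central element of the remaining factors, contradicting triviality of their Poisson centers.

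I expect the main obstacle to lie in statement (1), specifically in verifying rigorously that distinct Adams degrees force a Poisson endomorphism to respect the tensor decomposition at the level of subalgebras rather than merely at the level of filtrations or associated gradeds. Lifting a `graded' statement back to the filtered setting, and controlling the mixed images $f(x_i),f(y_i),f(z_i)$ across different factors, is the delicate part that the relative Dixmier property machinery is presumably designed to address.
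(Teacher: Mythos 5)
Your overall strategy (induct on $d$, peel off one factor, use relative-Dixmier-type rigidity coming from the distinct Adams degrees) is in the same spirit as the paper, but the two mechanisms you lean on do not work as stated, and you are missing the step that actually closes the argument for (1). First, "tracking the central elements $\Omega_i$" is vacuous: in the factor $P_{\Omega_i-\xi_i}=A_{\Omega_i}/(\Omega_i-\xi_i)$ the element $\Omega_i$ \emph{is} the scalar $\xi_i$ (zero if $\xi_i=0$), and for $\xi_i\neq 0$ the factor is Poisson simple with trivial Poisson center, so there is no central element left to track; this also breaks your "only if" argument in (3), where the correct obstruction is the irrelevant (augmentation) ideal of the graded factor $P_{\Omega_i}$, not the class of $\Omega_i$. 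Second, your key claim for (1) --- that an injective endomorphism sends each tensor factor into itself --- is neither what the paper proves nor what the machinery gives. The paper's separation device is the invariant $u_{good,w}$ (an intersection over \emph{all} good negatively-$\mathbb N$-$w$-filtrations, not the single Adams filtration): with $m=\min_i\deg\Omega_i$ and $w=m-3$, the subproduct $A=\bigotimes_{i<d}P_{\Omega_i-\xi_i}$ is $u_{good,w}$-maximal while $P_{\Omega_d-\xi_d}$ is $u_{good,w}$-minimal (Lemmas \ref{xxlem2.14} and \ref{xxlem2.11}), which forces $\phi(A)\subseteq A\otimes\Bbbk$ --- but says nothing about $\phi$ preserving the last factor. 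After normalizing so that $\phi|_A=\mathrm{id}$, the paper extends $\phi$ to $\psi$ on $K\otimes P_{\Omega_d-\xi_d}$ with $K=Q(A)$, proves $\psi$ bijective by a base-change argument over the Poisson field $K$ (Lemma \ref{xxlem3.4}, using uniqueness of the nontrivial $w$-filtration and finite-dimensionality of the graded pieces over $K$), and then --- crucially --- uses finiteness of $\Aut_{Poi.alg}(K\otimes P_{\Omega_d-\xi_d}\mid K)$ (Lemma \ref{xxlem3.5}) to get $\psi^{m}=\mathrm{id}$, hence $\phi^{m}=\mathrm{id}$, hence $\phi$ surjective. Without this torsion trick, bijectivity of $\psi$ over $K$ does not give surjectivity of $\phi$ over $A$, since $\psi^{-1}$ may introduce denominators from $A$; your proposal has no substitute for this step.

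The same issue undercuts your part (2): the asserted embedding $\Aut_{Poi.alg}(P)\hookrightarrow\prod_i\Aut_{Poi.alg}(P_{\Omega_i-\xi_i})$ presumes that automorphisms preserve each factor, which is unproven (an automorphism fixing $A$ pointwise can a priori move the last factor with coefficients in $A$). The paper instead builds a tower of short exact sequences (Theorem \ref{xxthm2.12}(2), Corollary \ref{xxcor3.6}) whose kernels embed into $\Aut_{Poi.alg}(K\otimes P_{\Omega_d-\xi_d}\mid K)$, bounded by $42\,\deg\Omega_d(\deg\Omega_d-3)^2$ via the Poisson center of $K$. For (3), your "if" direction needs care as well: the minimal-length tensor argument (paper's Lemmas \ref{xxlem3.1} and \ref{xxlem3.2}) requires triviality of the Poisson center of the \emph{fraction field} $Q(A)$ of the remaining factors (one divides by coefficients inside $Q(A)$), not merely of the factors themselves; the paper supplies this, together with Poisson simplicity of each $P_{\Omega_i-\xi_i}$ with $\xi_i\neq 0$, from \cite{UZ}. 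So the proposal identifies the right reduction but is missing the actual rigidity tools ($u_{good,w}$ over all $w$-filtrations, the base-change lemma over a Poisson field, and the finiteness-of-relative-automorphisms step) and contains the concrete errors noted above.
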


However, we do not know how to handle the case when two of $\deg \Omega_i$ are the same; so 
we leave it as a question.
\begin{question}
\label{xxque0.3}
Let $\Omega_1,\cdots,\Omega_d$ be a set of i.s.
potentials of Adams degree $\geq 5$. Let 
$\xi_1,\cdots,\xi_d$ be a set of scalars in 
$\Bbbk$. Let $P$ be the Poisson algebra 
$P_{\Omega_1-\xi_1}\otimes \cdots \otimes
P_{\Omega_d-\xi_d}$.
\begin{enumerate}
\item[(1)]
Is $P$ Dixmier?
\item[(2)]
Is $\Aut_{Poi.alg}(P)$ finite?
\end{enumerate}
\end{question}

A key step in proving Theorem \ref{xxthm0.2} involves the idea of so-called {\it relative Dixmier property}, which is 
motivated by the notion of relative cancellation as introduced in \cite{HNWZ1}. We have the following definition. 

 \begin{definition}
\label{xxdef0.4}
Let ${\mathcal A}$ and ${\mathcal R}$ be 
two families of Poisson $\Bbbk$-algebras. 
\begin{enumerate}
\item[(1)]
We say ${\mathcal A}$ is {\it 
${\mathcal R}$-Dixmier} if for every two 
Poisson algebras $A_1, A_2$ in ${\mathcal A}$ 
and a Poisson algebra $R$ in ${\mathcal R}$, 
for every injective Poisson $\Bbbk$-algebra morphism 
$f\colon A_1\longrightarrow A_2\otimes R$, we have 
${\rm{im}}(f)=A_2\otimes \Bbbk$ inside 
$A_2\otimes R$.
\item[(2)]
Let $A$ and $R$ be Poisson algebras. We 
say $A$ is {\it $R$-Dixmier} if 
${\mathcal A}$ is ${\mathcal R}$-Dixmier 
for ${\mathcal A}=\{A\}$ and ${\mathcal R}
=\{R\}$. 
\end{enumerate}
\end{definition}

It is clear that, if a Poisson algebra $A$ is $R$-Dixmier for a Poisson algebra $R$, then $A$ is 
Dixmier. However, there are examples in which a Poisson algebra $A$ is Dixmier, but not $R$-Dixmier 
for some chosen Poisson domain $R$, see Example \ref{xxexa1.6}. 

\begin{example}
\label{xxexa0.5}
For any $n\geq 2$, let $\Lambda=(\lambda_{ij})=(\lambda m_{ij})$ where $M=(m_{ij}) \in M_{n}(\Z)$ is a skew-symmetric $n\times n$ integer matrix and $\lambda \in \Bbbk^{\times}$. Let $L_{\Lambda}(\Bbbk)$ or simply $L_{\Lambda}$ denote the Poisson torus $\Bbbk[x_{1}^{\pm 1}, \cdots, x_{n}^{\pm 1}]$ 
with the following Poisson bracket:
\[
\{x_{i},x_{j}\}=\lambda_{ij} x_{i}x_{j}
\]
for $1\leq i\leq j\leq n$. Suppose that $L_{\Lambda}(\Bbbk)$ is Poisson simple. By Theorem \ref{xxthm2.6}, $L_{\Lambda}(\Bbbk)$ 
satisfies the Dixmier property. Furthermore, by using Theorem \ref{xxthm2.4}, $L_{\Lambda}(\Bbbk)$ is also $R$-Dixmier whenever $R$ is a connected graded Poisson domain (with respect to an Adams grading as a commutative algebra).
\end{example}

Another main result of this paper is to show that the Poisson algebras in Theorem \ref{xxthm0.2} are $\mathcal{R}$-Dixmier for a certain class $\mathcal{R}  $ of Poisson algebras. For any integer $w\in \Z$, let ${\mathcal G}_{w}$ be the class of Adams connected graded Poisson domains $R = \bigoplus_{i \geq 0} R_i $ generated in degree 1 such that $\{R_1, R_1\} \subseteq R_{w+2}$.

\begin{theorem}
\label{xxthm0.6}
Let $\Omega_1,\cdots,\Omega_d$ be a set of i.s. potentials of distinct Adams degrees with $\deg \Omega_i\geq 5$ for each 
$i=1, \cdots, d$. Let $\xi_1,\cdots,\xi_d$ be a set of scalars in $\Bbbk$. Let $P$ be the Poisson algebra $P_{\Omega_1-\xi_1}\otimes \cdots \otimes P_{\Omega_d-\xi_d}$. Let $L_{\Lambda}(\Bbbk)$ be a simple Poisson torus as defined in Example \ref{xxexa0.5}. Let $A$ be either $P$ or $L_{\Lambda}(\Bbbk) \otimes P$. Suppose $w\leq \min_{i}\{\deg \Omega_i\} -1$.
Then $A$ is ${\mathcal G}_{w}$-Dixmier.
\end{theorem}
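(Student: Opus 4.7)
The plan is to establish the conclusion first for $A = P$ by induction on the number of tensor factors $d$, and then to derive the case $A = L_\Lambda(\Bbbk) \otimes P$ by combining this with the relative Dixmier property for Poisson tori recorded in Example~\ref{xxexa0.5}. Throughout fix $R = \bigoplus_{n\geq 0} R_n \in {\mathcal G}_w$ and an injective Poisson morphism $f\colon A \to A \otimes R$; decomposing along the grading of $R$, write $f(a) = \sum_{n\geq 0} f_n(a)$ with $f_n(a) \in A \otimes R_n$, and let $\bar{f} := (1 \otimes \pi_0) \circ f$, where $\pi_0\colon R \to R_0 = \Bbbk$ is the augmentation. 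The task reduces to showing (a) $\bar{f}$ is a Poisson automorphism of $A$, and (b) $f_n = 0$ for every $n \geq 1$.

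For the base case $d = 1$ (so $A = P_{\Omega-\xi}$), first note that injectivity of $\bar{f}$ can be extracted from the observation that $\ker\bar{f}$ is a Poisson ideal of $P_{\Omega-\xi}$ whose image under $f$ lies in $A \otimes R_{\geq 1}$, combined with the description of Poisson ideals of $P_{\Omega-\xi}$ arising from the isolated-singularity assumption. Theorem 0.9 of \cite{HTWZ1} then upgrades $\bar{f}$ to an element of $\Aut_{Poi.alg}(P_{\Omega-\xi})$. Assume now, toward contradiction, that $f_r \neq 0$ for some minimal $r \geq 1$. The algebra identity $f(ab) = f(a)f(b)$ restricted to the $R_r$-component makes $f_r$ an $\bar{f}$-derivation, and the Poisson identity $\{f(a), f(b)\} = f(\{a,b\})$ at the same level shows that $f_r$ is additionally a Poisson $\bar{f}$-derivation into $A \otimes R_r$. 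The extra constraint needed to kill $f_r$ comes from examining the Poisson identity at level $R_{2r+w}$, where the bracket contribution $a_ib_j \otimes \{r_i^a, r_j^b\}$ from $\{f_r(a), f_r(b)\}$ first appears and is coupled to $f_{2r+w}$ of the bracket; since the intrinsic bracket on $P_{\Omega-\xi}$ shifts Adams degree by $\deg \Omega - 3$ while the bracket on $R$ shifts by $w$, the hypothesis $w \leq \deg \Omega - 1$ is exactly the numerical condition that keeps these two shifts from aligning in a way that absorbs the nontrivial $R$-bracket term. The Poisson-valuation machinery of \cite{HTWZ1} can then be applied to the resulting relation to conclude $f_r = 0$. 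The inductive step on $d$ uses the \emph{distinct} Adams degrees of the $\Omega_i$ to set up a lexicographic leading-term construction that separates the tensor factors of $P$, reducing the $d$-factor case to the single-factor case; this is precisely why Question~\ref{xxque0.3} is left open without the distinctness hypothesis.

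For $A = L_\Lambda(\Bbbk) \otimes P$, restrict $f$ to the subalgebras $1 \otimes P$ and $L_\Lambda \otimes 1$ separately. The restriction to $1 \otimes P$ is an injective Poisson morphism $P \to L_\Lambda \otimes P \otimes R$; using the Poisson-simplicity of $L_\Lambda(\Bbbk)$ (so that its nontrivial homogeneous monomials cannot appear in the image of an algebra morphism from a \emph{non}-torus piece without creating Poisson ideals) together with the already-established $\mathcal{G}_w$-Dixmier property of $P$, one concludes that this restriction has image in $1 \otimes P \otimes \Bbbk$. The restriction to $L_\Lambda \otimes 1$ then commutes with $f(1 \otimes P) \subseteq 1 \otimes P \otimes \Bbbk$, so it factors through the Poisson centralizer, and an application of Theorem 2.4 via Example~\ref{xxexa0.5} (with coefficient ring $R$) forces its image into $L_\Lambda \otimes \Bbbk \otimes \Bbbk$. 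Combining gives $f(A) \subseteq A \otimes \Bbbk$, as required. The principal obstacle is the base case $d=1$: extending the Poisson-valuation proof of \cite[Theorem 0.9]{HTWZ1} to the relative setting and verifying that $w \leq \deg \Omega - 1$ is indeed the sharp threshold at which the leading-term cancellation argument closes; the rest of the proof is comparatively formal bookkeeping once this technical heart is in place.
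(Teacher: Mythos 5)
Your proposal does not follow the paper's route, and as written it has genuine gaps at exactly the points that carry the theorem. The central step of your plan — showing that every positive-degree component $f_r$, $r\geq 1$, vanishes — is never actually argued: after setting up that $f_r$ is a Poisson $\bar f$-derivation, you assert that the inequality on $w$ "is exactly the numerical condition" and that "the Poisson-valuation machinery of \cite{HTWZ1} can then be applied," but no relation is derived and no valuation is produced; you yourself label this the "principal obstacle." The same is true of the inductive step, where a "lexicographic leading-term construction" separating the factors of $P$ is named but not performed. So the proposal is a plan rather than a proof. For contrast, the paper never does any component-by-component analysis in $R$: it first proves that $A$ is Dixmier by induction on $d$ (Lemma \ref{xxlem2.14}, Lemma \ref{xxlem2.11}, localization to the fraction field of the first $d-1$ factors, Lemma \ref{xxlem3.4}, Lemma \ref{xxlem3.5}, and Theorem \ref{xxthm2.6} for the torus factor), and then observes that $A$ is $u_{good,w}$-maximal while any $R\in{\mathcal G}_w$ carries the nontrivial filtration coming from its grading with $F_0(R)=\Bbbk$, hence is $u_{firm,w}$-minimal; Theorem \ref{xxthm2.12}(3) then yields the relative Dixmier property formally. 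In particular no sharpness of the bound on $w$ is needed or claimed, and the graded bookkeeping you propose is precisely what the $u$-invariant formalism is designed to replace.

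The torus case in your proposal is also not sound as stated. You claim that the restriction of $f$ to $1\otimes P$ has image inside $1\otimes P\otimes\Bbbk$, justified only by the parenthetical that monomials of $L_{\Lambda}(\Bbbk)$ "cannot appear in the image \dots without creating Poisson ideals"; Poisson simplicity of $L_{\Lambda}(\Bbbk)$ gives no such constraint on where an injective morphism sends elements of $P$, and the subsequent step ("factors through the Poisson centralizer") is likewise unsubstantiated. Note that the paper's own results only control the image of the $u_{good,w}$-maximal factor, namely they force ${\rm{im}}(f)\subseteq (L_{\Lambda}(\Bbbk)\otimes P)\otimes\Bbbk$, not the finer containment $f(1\otimes P)\subseteq 1\otimes P\otimes\Bbbk$ that your splitting argument requires. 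The paper avoids this issue entirely by treating $L_{\Lambda}(\Bbbk)\otimes P$ uniformly in the induction, absorbing the torus into the $u_{good,w}$-maximal tensor factor via \cite[Corollary 1.4]{HNWZ1}. If you want to salvage your approach, the missing work is precisely a relative, graded refinement of the $d=1$ argument of \cite[Theorem 0.9]{HTWZ1} plus a correct reduction for the torus factor — both of which the $u$-invariant route already packages in Theorems \ref{xxthm2.4} and \ref{xxthm2.12}.
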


As an immediate consequence, we have the following corollary (see Definition \ref{xxdef2.16} and the references \cite{HNWZ1, Mon} for the relevant concepts).

\begin{corollary}
\label{xxcor0.7}
Let $A$ and $w$ be as in Theorem \ref{xxthm0.6}.
\begin{enumerate}
\item[(1)]
$A$ is ${\mathcal G}_w$-cancellative.
\item[(2)]
If $H$ is a Poisson Hopf algebra that is in ${\mathcal G}_w$ as a Poisson algebra, then every 
$H$-coaction on $A$ is trivial. 
\end{enumerate}
\end{corollary}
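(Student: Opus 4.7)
The plan is to derive both parts of Corollary \ref{xxcor0.7} from the relative Dixmier conclusion of Theorem \ref{xxthm0.6}, following the pattern ``relative Dixmier implies relative cancellation and rigidity of coactions'' developed in \cite{HNWZ1}.

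I would treat part (2) first because it is essentially automatic. A Poisson $H$-coaction is a Poisson algebra morphism $\rho\colon A \to A\otimes H$ satisfying coassociativity and counitality, and the identity $(\text{id}_A\otimes \epsilon_H)\circ \rho = \text{id}_A$ forces $\rho$ to be injective. Since $H\in \mathcal{G}_w$ by hypothesis, Theorem \ref{xxthm0.6} applies and gives $\text{im}(\rho)\subseteq A\otimes \Bbbk$. Writing $\rho(a)=\rho_0(a)\otimes 1_H$ and reapplying the counit recovers $\rho_0=\text{id}_A$, so $\rho$ is the trivial coaction $a\mapsto a\otimes 1_H$.

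For part (1), I would unpack Definition \ref{xxdef2.16} and start from a Poisson isomorphism $\phi\colon A\otimes R\cong B\otimes R'$ with $R, R'\in \mathcal{G}_w$. The restriction $\phi|_A\colon A\hookrightarrow B\otimes R'$ is an injective Poisson morphism; the goal is to force its image into $B\otimes \Bbbk$, which would yield a Poisson isomorphism $A\cong B$ and thus establish cancellation. I plan to do this by composing $\phi|_A$ with the augmentation projection $\text{id}_B\otimes \epsilon_{R'}\colon B\otimes R'\to B$ to build a candidate morphism $\pi\colon A\to B$, doing the same with $\phi^{-1}|_B$ to build $\pi'\colon B\to A$, and then verifying $\pi'\circ\pi=\text{id}_A$ and $\pi\circ\pi'=\text{id}_B$ by invoking Theorem \ref{xxthm0.6} on the induced self-maps of $A\otimes R$.

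The main obstacle is that Theorem \ref{xxthm0.6} is stated for injections $A\hookrightarrow A\otimes R$ with matched factors on each side, whereas cancellation natively produces a map $A\hookrightarrow B\otimes R'$ with $B$ a priori different from $A$. Closing this gap is the technical heart of the argument; I would follow the device used in \cite{HNWZ1}, combining the two augmentation projections with the Dixmier rigidity of $A$ inside $A\otimes R$ to force $\phi$ to send the natural copy of $A$ in $A\otimes R$ into $B\otimes \Bbbk\subseteq B\otimes R'$, after which both the isomorphism $A\cong B$ and the triviality of the cancellation setup fall out by routine bookkeeping.
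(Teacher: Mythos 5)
Your part (2) is fine and is essentially the paper's own argument (cf.\ Theorem \ref{xxthm2.18}): counitality forces $\rho$ to be injective, Theorem \ref{xxthm0.6} (via Definition \ref{xxdef0.4}) pins ${\rm{im}}(\rho)$ inside $A\otimes \Bbbk$, and applying the counit once more identifies $\rho$ with $a\mapsto a\otimes 1_H$.

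Part (1), however, has a genuine gap. You correctly note that Theorem \ref{xxthm0.6} only controls injections $A\to A\otimes R$ and hence says nothing directly about $\phi|_A\colon A\hookrightarrow B\otimes R'$ when $B$ is an unknown algebra, but your proposed repair does not close this gap. The composites $\pi=(\mathrm{id}_B\otimes\epsilon_{R'})\circ\phi|_A$ and $\pi'=(\mathrm{id}_A\otimes\epsilon_{R})\circ\phi^{-1}|_B$ need not be injective (the augmentation kills the positive-degree part, and before one knows $\phi(A\otimes\Bbbk)\subseteq B\otimes\Bbbk$ there is no control on where $\phi|_A$ lands), so the Dixmier property cannot be invoked on them; moreover, there is no induced injective self-map of $A\otimes R$ to which Theorem \ref{xxthm0.6} could be applied, and asserting $\pi'\circ\pi=\mathrm{id}_A$ is essentially the statement one is trying to prove. (A further small point: for $\mathrm{id}_B\otimes\epsilon_{R'}$ to be a Poisson map one needs the augmentation ideal $R'_{\geq 1}$ to be a Poisson ideal, which is not automatic for every $w$ allowed in Theorem \ref{xxthm0.6}.) The paper's route to cancellation does not go through the Dixmier property at all: the proof of Theorem \ref{xxthm0.6} shows that $A$ is $u_{good,w}$-maximal, every $R\in {\mathcal G}_w\subseteq{\mathcal F}_w$ satisfies $u_{good,w}(R)=\Bbbk$, and since $u_{good,w}$ is an isomorphism invariant, Lemma \ref{xxlem2.11}(3) gives
\[
\phi\bigl(A\otimes\Bbbk\bigr)=\phi\bigl(u_{good,w}(A\otimes R)\bigr)=u_{good,w}(B\otimes R')\subseteq B\otimes\Bbbk ,
\]
after which the cancellation is completed exactly as in Theorem \ref{xxthm2.17}, i.e., by the argument of \cite[Theorem 2.4(1)]{HNWZ1} (which is also where the equal Gelfand--Kirillov dimension hypothesis on $R,R'$ is used). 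Your deferral to ``the device used in \cite{HNWZ1}'' is precisely this $u$-invariant argument, so the concrete steps you wrote down would have to be replaced by it rather than supplemented.
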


In general, it is not easy to prove that a Poisson algebra has the Dixmier property, but it does not necessarily
mean that Poisson algebras with the Dixmier property are rare. At the end of the paper, we will consider 
the following question:

\begin{question}
\label{xxque0.8}
Suppose a Poisson algebra $B$ satisfies the 
Dixmier property and $A$ is a Poisson 
subalgebra of $B$. Under what conditions does $A$ 
satisfy the Dixmier property?
\end{question}

We are able to answer this question in a special case:

\begin{theorem}
\label{xxthm0.9}
Suppose that a Poisson domain $B$ is integrally closed as a commutative algebra, and Dixmier, and 
$A$ is a cofinite-dimensional Poisson subalgebra of $B$ such that $B\subseteq Q(A)$. Then 
$A$ also satisfies the Dixmier property.

\end{theorem}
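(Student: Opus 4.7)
The plan is to reduce the Dixmier property for $A$ to the assumed Dixmier property for $B$ by lifting an injective Poisson endomorphism $f\colon A\longrightarrow A$ through the common Poisson fraction field. Since $A\subseteq B\subseteq Q(A)$ with $B$ a Poisson domain, one has $Q(B)=Q(A)$, and $f$ extends uniquely to an injective Poisson endomorphism $\tilde f\colon Q(A)\longrightarrow Q(A)$.

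The key structural input is to recognize $B$ as the integral closure of $A$ inside $Q(A)$. Because $\dim_{\Bbbk}(B/A)<\infty$, picking representatives of a $\Bbbk$-basis of $B/A$ together with $1$ shows $B$ is finitely generated as an $A$-module, hence every element of $B$ is integral over $A$; combined with the integral-closedness of $B$ in $Q(B)=Q(A)$, this identifies $B$ with the integral closure. Therefore $\tilde f(B)$ is integral over $\tilde f(A)\subseteq A$, which forces $\tilde f(B)\subseteq B$. Restriction now gives an injective Poisson endomorphism $\tilde f|_B\colon B\longrightarrow B$, and the Dixmier hypothesis on $B$ promotes it to a Poisson automorphism of $B$; in particular $\tilde f$ is a Poisson automorphism of $Q(A)$ whose inverse also sends $B$ to $B$.

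It then remains to deduce $f(A)=A$. I would iterate $\tilde f^{-1}$ on the inclusion $\tilde f(A)\subseteq A$ to obtain the ascending chain
\[
A\;\subseteq\; \tilde f^{-1}(A)\;\subseteq\; \tilde f^{-2}(A)\;\subseteq\;\cdots\;\subseteq\; B,
\]
all terms landing inside $B$ because $\tilde f^{-1}(B)=B$. Since $\dim_{\Bbbk}(B/A)<\infty$, this chain of intermediate $\Bbbk$-subspaces must stabilize at some stage $\tilde f^{-n}(A)=\tilde f^{-(n+1)}(A)$, and applying $\tilde f^n$ yields $A=\tilde f^{-1}(A)$, i.e.\ $\tilde f(A)=A$. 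Hence $f$ is surjective, so $A$ is Dixmier.

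The step I expect to be the real crux is the middle one: securing that $\tilde f$ preserves $B$. This is precisely where both hypotheses on $B$ are used — integral-closedness yields the setwise invariance of $B$ under any endomorphism fixing $A$, while cofiniteness feeds into the chain-stabilization that closes the argument. Without integral-closedness one could not even lift $f$ to $B$, and without the finiteness of $\dim_\Bbbk(B/A)$ the final descent from $B$ back to $A$ would collapse.
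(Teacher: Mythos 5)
Your proposal is correct, and it follows the same overall strategy as the paper's argument (which appears as Lemma~\ref{xxlem4.8}(1)): identify $B$ with the canonical closure of $A$ inside $Q(A)$, lift the injective endomorphism of $A$ to an injective Poisson endomorphism of $B$, invoke the Dixmier property of $B$, and then use $\dim_\Bbbk B/A<\infty$ to descend back to $A$. The differences are in execution. For the lifting, you argue directly that $B$ is the integral closure of $A$ in $Q(A)$ (finite codimension gives integrality, integral-closedness gives maximality), so the extension $\tilde f$ of $f$ to $Q(A)$ satisfies $\tilde f(B)\subseteq B$; the paper reaches the same endomorphism $\sigma'$ of $B$ through its closure formalism, using $\moc(A)=B$ via Lemma~\ref{xxlem4.5} and the functoriality of $\moc$. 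For the descent, the paper applies the five lemma to the diagram of short exact sequences $0\to A\to B\to \CompM(A)\to 0$: surjectivity of $\sigma'$ forces the induced map on the finite-dimensional module $\CompM(A)=B/A$ to be bijective, whence $\sigma$ is onto. You instead iterate $\tilde f^{-1}$ to get the ascending chain $A\subseteq \tilde f^{-1}(A)\subseteq\cdots\subseteq B$, which stabilizes by finite codimension, giving $\tilde f(A)=A$. Both finishes are valid and use the hypotheses in the same way; your version avoids the complement-module machinery of Section~4 at the cost of being a bit more hands-on, while the paper's version packages the argument so that it also yields the relative statements in Lemma~\ref{xxlem4.8}(2)--(4).
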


A relative version of the above theorem is Lemma~\ref{xxlem4.8}, whose proof is based on the introduction of the concepts of complementary spaces and complement modules in Section~4. We show that these concepts are invariants that help us better understand the Poisson automorphism groups of a large class of algebras. In particular, we identify a class of algebras whose Poisson algebra automorphism groups are trivial.

\section{Some examples of Poisson algebras}
\label{xxsec1}

In this section, we recall some important classes of Poisson algebras relevant to the studies in the next few sections. The first one is the Poisson Weyl algebra, which is the object appearing in the Poisson conjecture.

\begin{example}
\label{xxexa1.1}
Let $W_1$ be the Poisson polynomial algebra $\Bbbk[x, y]$ generated by $x$ and $y$ with the Poisson 
bracket determined by $\{x,y\}=1$. This is called the {\it first Poisson Weyl algebra}. For $n\geq 2$, the 
$n$-th Poisson Weyl algebra is defined to be $W_n:=(W_1)^{\otimes n}$. It is well known that $W_{n}$ is Poisson simple for any $n\geq 1$. The Poisson conjecture asserts that every {nonzero} Poisson algebra endomorphism 
of $W_n$ is bijective. 
\end{example}

\begin{example}
\label{xxexa1.2}
Let $W_1$ be the first Poisson Weyl algebra generated by $x$ and $y$. The localization of $W_1$ by inverting $x$ is a Poisson algebra, and it is denoted by $W_1[x^{-1}]$. We claim that $A:=W_1[x^{-1}]$ does not have the Dixmier property. To see that, we let $X=x^2$ and $Y=\frac{1}{2} x^{-1}y$. It is easy to check that $\{X,Y\}=1$. Define $f: A\longrightarrow A$ by $f(x)=X$ and $f(y)=Y$. Then $f$ extends to an injective Poisson algebra endomorphism of $A$ with $x\not\in f(A)$. Therefore, $f$ is not a bijective map as desired.
\end{example}

\begin{example}
\label{xxexa1.3}
Let $n$ be an integer $\geq 2$ and $\Lambda=(\lambda_{ij})\in M_{n}(\Bbbk)$ be an $n\times n$ skew-symmetric matrix. The skew Poisson polynomial algebra associated to $\Lambda$ is defined to be the polynomial algebra $\Bbbk[x_1,\cdots,x_n]$ with its Poisson bracket determined by $$\{x_i,x_j\}=\lambda_{ij} x_i x_j, \quad \forall\; 1\leq i, j\leq n.$$ This Poisson algebra is denoted by $S_{\Lambda}(\Bbbk)$. 
\end{example}

\begin{example}
\label{xxexa1.4}
Let $S_{\Lambda}(\Bbbk)$ be the Poisson Polynomial algebra defined in Example \ref{xxexa1.3}. Its localization
by inverting $x_i$ for all $i$ is called the Poisson Laurent polynomial algebra (or Poisson torus) and is denoted by
$L_{\Lambda}(\Bbbk)$. 
\end{example}

Note that $L_{\Lambda}(\Bbbk)$ is not necessarily Dixmier as shown by the example below. 
\begin{example}
\label{xxexa1.5} 
Let $n = 3$ and $\lambda_{ij}=1$ for $1\leq i<j\leq 3$ and define $ \phi$ as an algebra endomorphism of 
$L_{\Lambda}(\Bbbk)$ 
such that 
\[
\phi(x_1) = x_1^3 x_2^{-2} x_3^2, \qquad
\phi(x_2) = x_1 x_3, \qquad
\phi(x_3) = x_3.
\]
Then $x_2 $ is not in the image of $\phi$. Now let us check if $\phi$ is a Poisson morphism. It is not difficult to see that $$\phi (\{x_1, x_2\}) = \phi ( x_1 x_2 ) = x_1^4 x_2^{-2} x_3^3 = \{\phi (x_1), \phi(x_2)\}$$ and also $$\phi (\{x_1, x_3\}) = \phi ( x_1 x_3 ) = x_1^3 x_2^{-2} x_3^3 = \{\phi (x_1), \phi(x_3)\}$$ and also clearly $\phi (\{x_2, x_3\}) = x_1 x_3^2 = \{\phi (x_2), \phi(x_3)\}$. So $\phi$ is a morphism of Poisson algebra. Note that $\phi ( x_1 ^{n_1} x_2 ^{n_2} x_3^{n_3}) =  x_1 ^{3n_1+ n_2} x_2 ^{-2n_2} x_3^{2n_1 + n_2 + n_3}$. Thus $ \phi $ is injective but not surjective. This shows that for $n = 3$, $L_{\Lambda}(\Bbbk)$ does not satisfy the Dixmier property. Note that in this case, $L_{\Lambda}(\Bbbk)$ is not Poisson simple; indeed, $x_{1}x_{2}^{-1}x_{3}$ is in the Poisson center.  

\end{example}

Next, we give an example which is Dixmier, but not $R$-Dixmier for some Poisson domain $R$.
\begin{example}
\label{xxexa1.6}
Let $n=2$. As we will see in Example \ref{xxexa2.5}, $L_{\Lambda}(\Bbbk)$ does satisfy both the Dixmier and $R$-Dixmier property for any connected graded Poisson domain $R$. On the other hand, $L_{\Lambda}(\Bbbk)$ is not $\Bbbk[t^{\pm 1}]$-Dixmier since there is a Poisson algebra morphism $$\phi \colon L_{\Lambda}(\Bbbk)\longrightarrow L_{\Lambda}(\Bbbk) \otimes \Bbbk[t^{\pm 1}]$$ which sends $x_1^{\pm 1}$ to $x_1^{\pm 1}\otimes t^{\pm 1}$ and $x_2^{\pm 1}$ to $x_2^{\pm 1}\otimes t^{\pm 1}$ such that ${\rm{im}}(\phi)\not\subseteq L_{\Lambda}(\Bbbk) \otimes \Bbbk$.
\end{example}

\begin{example}
\label{xxexa1.7}
Let $A$ be a Poisson polynomial algebra 
$\Bbbk[x_1,\cdots,x_n]$ with a Poisson bracket
$\{-,-\}$. We can consider $A$ as a connected graded
algebra by setting $\deg x_i=1$ for all $i$. 
For any $d\geq 2$, let $A_{\geq d}$
be the space $\oplus_{i\geq d} A_i$. One 
can  check that $\{A_{\geq d}, A_{\geq d}\}
\subseteq A_{\geq 2d-2}\subseteq A_{\geq d}$, see for example \cite [Proposition 1.6.]{Possionstructers}.
So $A(d)\colon =\Bbbk\oplus A_{\geq d}$ is a finite codimensional Poisson
subalgebra of $A$.
\end{example}

One would wonder if $L_{\Lambda}(\Bbbk)$ has proper finite codimensional Poisson subalgebras. We have the following result. 

\begin{proposition}
 $L_{\Lambda}(\Bbbk)$ has no proper Poisson subalgebra of finite codimension over $\kk$.
\end{proposition}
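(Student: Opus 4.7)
The plan is to argue by contradiction and exhibit, inside the hypothetical proper finite-codimensional Poisson subalgebra $A\subsetneq L_\Lambda$, a nonzero Poisson ideal of $L_\Lambda$; this would contradict the Poisson simplicity of $L_\Lambda$, which is the setting in which this statement is relevant (see Example~\ref{xxexa0.5}).

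First, since $1\in A$ and $L_\Lambda/A$ is finite-dimensional, the ring extension $A\subseteq L_\Lambda$ is module-finite, every element of $L_\Lambda$ is integral over $A$, and the conductor $J_0:=\{a\in L_\Lambda:aL_\Lambda\subseteq A\}$ is a nonzero ideal of $L_\Lambda$ sitting inside $A$. To absorb the bracket I would define inductively $J_{n+1}:=\{a\in J_n:\{a,L_\Lambda\}\subseteq J_n\}$; a Leibniz-rule computation, expanding $\{ar,s\}=a\{r,s\}+r\{a,s\}$ and then rewriting $a\{r,s\}c=a\{r,sc\}-ar\{s,c\}$ to keep everything inside $J_{n-1}$, verifies inductively that each $J_n$ is again an ideal of $L_\Lambda$. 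The intersection $J_\infty:=\bigcap_n J_n$ is then automatically a Poisson ideal of $L_\Lambda$, so by Poisson simplicity $J_\infty\in\{0,L_\Lambda\}$; the case $J_\infty=L_\Lambda$ immediately forces $A=L_\Lambda$, contradicting properness.

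The main obstacle, which I expect to be the hardest step, is to rule out $J_\infty=0$. One natural route is to exploit the action of the torus $T=(\kk^\times)^n$ on $L_\Lambda$ by Poisson automorphisms $\sigma_t(x_i)=t_i x_i$: since $L_\Lambda$ is $\Z^n$-graded and the bracket is homogeneous, a ``graded closure'' of the conductor should still be a nonzero Poisson ideal inside $A$. Once one is in the $\Z^n$-graded setting the conclusion becomes easy: any graded subalgebra $\bigoplus_{\alpha\in S}\kk x^\alpha$ of $L_\Lambda$ with $\Z^n\setminus S$ finite must equal $L_\Lambda$, because otherwise, picking $\alpha\notin S$, the infinite set $\alpha-S=\{\alpha-\beta:\beta\in S\}$ would have to sit inside the finite set $\Z^n\setminus S$, a contradiction. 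Making the reduction to the graded case rigorous---that is, showing that the iterated conductor retains a nonzero homogeneous component, rather than assuming this at the outset---is where the genuine difficulty will lie, and is the step I would have to work hardest on.
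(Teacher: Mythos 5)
Your overall strategy (conductor ideal, then a ``Poisson core'' $J_\infty=\bigcap_n J_n$, then Poisson simplicity) contains a genuine gap at exactly the decisive step, and the gap is not just unfinished bookkeeping: within your own setup it cannot be filled. Under the simplicity assumption you invoke (which, note, is an extra hypothesis not present in the statement), proving $J_\infty\neq 0$ is logically the same as the proposition itself: if $A\subsetneq L_\Lambda$ is proper, then the conductor $J_0\subseteq A$ is a \emph{proper} ideal of $L_\Lambda$, so the Poisson ideal $J_\infty\subseteq J_0$ cannot be all of $L_\Lambda$, and simplicity then forces $J_\infty=0$. Thus the conductor/core machinery merely reformulates the problem and gives no leverage; all the content is still hidden in the step you defer. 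Moreover, the one concrete idea you propose for that step --- passing to a ``graded closure'' of the conductor --- provably produces the zero ideal: with respect to the $\Z^n$-grading, any nonzero graded ideal of the Poisson torus contains a monomial, which is a unit, hence equals $L_\Lambda$; since $J_0$ is proper it contains no nonzero graded ideal at all. (Your closing observation about cofinite monomial-spanned subalgebras is correct, but it is purely multiplicative; the Poisson bracket must enter somewhere, and in your outline it never does at the critical point. Recall also that without the bracket the statement is false, e.g.\ $\Bbbk+(x-1)^2\Bbbk[x^{\pm1}]$ has codimension one in $\Bbbk[x^{\pm 1}]$, so no bracket-free reduction can succeed.)

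The paper's proof is entirely different and elementary: it works with the monomial basis directly. Finite codimension is used to assert that the set $M$ of exponents $m$ with $x^m\notin S$ is finite; then one picks $N$ with $x^{(N,0)},x^{(0,N)}\in S$ and, for a given $m$, an integer $n$ such that both $nu$ and $m-nu$ avoid $M$ (possible since $M$ is finite), and computes $\{x^{nu},x^{m-nu}\}=\lambda_{12}\,nN m_2\,x^m\in S$, a nonzero multiple of $x^m$ in characteristic zero unless $m_2=0$, in which case one exchanges the roles of the two coordinate directions. Hence every monomial lies in $S$ and $S=A$. This argument uses only $\lambda_{12}\neq 0$ and never appeals to conductors or to Poisson simplicity as a black box. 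To complete your proposal you would need a computation of this concrete kind in any case, so the abstract ideal-theoretic scaffolding should be discarded rather than repaired.
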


\begin{proof}
We write a proof for the case $n = 2$. The proof for $n > 2$ is similar. 
Write monomials $x^m:=x_1^{m_1}x_2^{m_2}$ for $m=(m_1,m_2)\in\mathbb Z^2$. The bracket on monomials is
\[
\{x^u,x^v\}= \lambda_{12}\,(u_1v_2-u_2v_1)\,x^{u+v}\qquad(u,v\in\mathbb Z^2).
\]

Suppose $S\subseteq A$ is a Poisson subalgebra of finite codimension and $S\neq A$. Let
\[
M:=\{m\in\mathbb Z^2:\;x^m\notin S\}.
\]
Finite codimension implies $M$ is finite. Let
\[
T:=\mathbb Z^2\setminus M=\{m\in\mathbb Z^2:\;x^m\in S\},
\]
so $T$ is cofinite and closed under addition (because $S$ is a subalgebra).

Choose an integer $N>0$ such that $(N,0)\in T$ and $(0,N)\in T$ (this is possible since $M$ is finite). Put
\[
u:=(N,0),\qquad v:=(0,N).
\]
Fix any $m\in\mathbb Z^2$. Since $M$ is finite, there are only finitely many integers $n$ for which either $nu\in M$ or $m-nu\in M$. Hence, we can choose a non-zero integer $n$ such that both $nu\in T$ and $m-nu\in T$. Because $S$ is Poisson-closed, $x^{nu},x^{m-nu}\in S$ implies
\[
\{x^{nu},x^{m-nu}\}=\lambda_{12}\big((nN)(m_2)-0\cdot(m_1-nN)\big)\,x^{m}=\lambda_{12} nN m_2\,x^m\in S.
\]
As $\operatorname{char}\kk=0$ and $\lambda_{12}\neq0$, the scalar $nN m_2$ is zero only if $m_2=0$. Thus for every $m$ with $m_2\neq0$ we get $x^m\in S$.

If $m_2=0$, repeat the same argument exchanging the roles of $u$ and $v$: choose $n$ with $nv\in T$ and $m-nv\in T$, then
\[
\{x^{nv},x^{m-nv}\}= \lambda_{12}\,nN m_1\,x^m\in S,
\]
and again, the coefficient is nonzero unless $m_1=0$.

Finally, the remaining case $m=(0,0)$ corresponds to the unit $x^0=1\in S$ since $S$ is a subalgebra. Therefore every monomial $x^m$ lies in $S$, so $M=\varnothing$ and $S=A$, contradicting $S\neq A$.
\end{proof}

\section{Preliminaries}
\label{xxsec2}
In this section, we will review some invariants that will be used later on in this paper and establish some preparatory results.

\subsection{$u$-invariants}
\label{xxsec2.1}
In this subsection, we will recall the definition of $u$-invariant that was introduced in \cite{HNWZ1}. Note that in the reference \cite{HNWZ1}, the $u$-invariant was called the {\it set of universally degree 0 elements}.

Let $A$ be an algebra (not necessarily commutative). A 
{\it negatively-${\mathbb N}$-filtration} of $A$ is a descending sequence of subspaces ${\mathbf F}\colon =\{F_{-i}(A)\subseteq A\}_{i\geq 0}$ satisfying 
\begin{enumerate}
\item[(F1)]
$F_{-i-1}(A)\supseteq F_{-i}(A)\supseteq \cdots \supseteq 
F_{0}(A) \supseteq \Bbbk$ for all $i\geq 0$,
\item[(F2)]
$A=\bigcup_{i} F_{-i}(A)$,
\item[(F3)]
$F_{-i}(A) F_{-j}(A)\subseteq F_{-(i+j)} (A)$ 
for all $i,j\geq 0$.
\end{enumerate}
Note that, in \cite{HNWZ1}, the authors used ascending filtrations instead of descending ones. But 
these two concepts are equivalent after the index $-i$ is switched to $i$. The reason we use descending filtrations 
here is to match up with the definition of Poisson valuations as introduced and studied in \cite{HTWZ1, HTWZ2} that 
we shall recall soon.

Given a negatively-${\mathbb N}$-filtration ${\mathbf F}$, the corresponding associated graded ring is defined to be
$$\gr_{\mathbf F} A\colon =
\bigoplus_{i=0}^{-\infty} 
F_{i}(A)/F_{i+1}(A),$$
where $F_{i}(A)=0$ for $i\geq 1$ by convention. So $\gr_{\mathbf F}A$ is 
negatively-${\mathbb N}$-graded. We say 
${\mathbf F}$ is a {\sf good filtration} 
if $\gr_{\mathbf F} A$ is a domain (it is necessary 
that $A$ itself is a domain). For an 
algebra $A$, we set

\begin{equation}
\label{F2.1.1}\tag{F2.1.1}
\Phi_{good}(A)
={\text{the set of all good 
negatively-${\mathbb N}$-filtrations of $A$}}.
\end{equation}

According to \cite[Definition 0.2]{HNWZ1}, an algebra $A$ is called a {\it firm} domain if for every domain 
$B$, $A\otimes B$ is a domain. We say ${\mathbf F}$ is a {\it firm filtration} if $\gr_{\mathbf F} A$ 
is a firm domain (it is necessary that $A$ itself is a firm domain). For any firm domain $A$, let us set

\begin{equation}
\label{F2.1.2}\tag{F2.1.2}
\Phi_{firm}(A)
={\text{the set of all firm 
negatively-${\mathbb N}$-filtartions of $A$}}.
\end{equation}

Next, we recall the definitions of some invariants, defined in terms of the set of good or firm filtrations on $A$.

\begin{definition}
\cite[Definition 0.3]{HNWZ1}
\label{xxdef2.1}
Let $A$ be a domain. The {\it 
$u_{good}$-invariant} of $A$ is defined 
to be
$$u_{good}(A): = \bigcap_{{\mathbf F}\in 
\Phi_{good}(A)} F_0(A)$$
where the intersection $\bigcap$ runs over 
all filtrations in  $\Phi_{good}(A)$. In 
\cite{HNWZ1}, $u_{good}(A)$ is called 
{\it the set of universally degree $0$ 
elements} of $A$. Similarly, when $A$ is a 
firm domain, the {\it 
$u_{firm}$-invariant} of $A$ is defined 
to be
$$u_{firm}(A): = \bigcap_{{\mathbf F}\in 
\Phi_{firm}(A)} F_0(A)$$
where the intersection $\bigcap$ runs over 
all filtrations in  $\Phi_{firm}(A)$. % Note that if $A$ is not firm, then it admits no firm filtration, and we  assume that $u_{\mathrm{firm}}(A)=A$.
 
\end{definition}

It is clear that $u_{good}(A)$ (resp. $u_{firm}(A)$) is a subalgebra of $A$. We 
say a filtration ${\mathbf F}$ is {\sf trivial} if $F_0(A)=A$.

\begin{definition}
\label{xxdef2.2}
Let $A$ be a domain.
\begin{enumerate}
\item[(1)]
We say $A$ is {\it $u_{good}$-maximal} if 
$u_{good}(A)=A$, or equivalently, $\Phi_{good}(A)$ 
consists of only the trivial filtration. 
\item[(2)]
We say $A$ is {\it $u_{good}$-minimal} if $u_{good}(A)=\Bbbk$.
\end{enumerate}
If $A$ is a firm domain, then the notions of 
{\it $u_{firm}$-maximal} and {\it $u_{firm}$-minimal}
are defined similarly.
\end{definition}

Below is a collection of basic properties on $u$-invariants which were proved earlier in \cite{HNWZ1}.

\begin{lemma}
\label{xxlem2.3} Let $A$ and $B$ be two domains.
\begin{enumerate}
\item[(1)]
If $A$ is a subalgebra of $B$, then $u_{good}(A)\subseteq u_{good}(B)$.
\item[(2)]
Suppose that $A$ and $B$ are commutative domains and that $B$ is a firm domain. Then 
$u_{good}(A) \otimes u_{good}(B)\
\subseteq u_{good}(A\otimes B)
\subseteq u_{good}(A) \otimes 
u_{firm}(B)$.
\item[(3)]
Suppose that $A$ and $B$ are commutative domains and the base field $\Bbbk$ is algebraically closed. Then 
$u_{good}(A\otimes B)=u_{good}(A)
\otimes u_{good}(B)$.
\item[(4)] If $B$ is a firm domain, then 
 $u_{good}(B)
\subseteq u_{firm}(B)$. 
\end{enumerate}
\end{lemma}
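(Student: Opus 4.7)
The plan is to dispatch (1) and (4) by formal bookkeeping with filtrations, then prove (2) via a tensor-filtration construction---this is the main work---and deduce (3) from (2) using a fact specific to algebraically closed base fields.

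For (1), I would use restriction: given $\mathbf{F}\in\Phi_{good}(B)$, set $F_{-i}(A):=F_{-i}(B)\cap A$. Conditions (F1)--(F3) are immediate, and the natural map $\gr_{\mathbf{F}|_A}A\to\gr_{\mathbf{F}}B$ is an injective algebra homomorphism (because $F_{-i+1}(B)\cap A=F_{-i+1}(A)$), so $\gr_{\mathbf{F}|_A}A$ is a domain and the restriction is good. Hence any $a\in u_{good}(A)$ lies in $F_0(A)\subseteq F_0(B)$. Part (4) is immediate from the inclusion $\Phi_{firm}(B)\subseteq\Phi_{good}(B)$ (a firm domain is in particular a domain), so intersecting $F_0$ over the smaller set yields a larger result.

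The heart of the lemma is (2). The first inclusion follows from (1): the canonical embeddings $A,B\hookrightarrow A\otimes B$ (well-defined because $B$ firm makes $A\otimes B$ a domain) place $u_{good}(A)\otimes 1$ and $1\otimes u_{good}(B)$ inside the subalgebra $u_{good}(A\otimes B)$, so the product $u_{good}(A)\otimes u_{good}(B)$ lies there too. For the second inclusion, given $\mathbf{F}_A\in\Phi_{good}(A)$ and $\mathbf{F}_B\in\Phi_{firm}(B)$, I would form the tensor filtration
\[
F_{-n}(A\otimes B):=\sum_{\substack{i+j=n\\ i,j\geq 0}} F_{-i}(A)\otimes F_{-j}(B),
\]
whose associated graded is $\gr_{\mathbf{F}_A}A\otimes\gr_{\mathbf{F}_B}B$, a domain because the first factor is a domain and the second is firm. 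Thus this is a filtration in $\Phi_{good}(A\otimes B)$ with $F_0(A\otimes B)=F_0(A)\otimes F_0(B)$. Given $x\in u_{good}(A\otimes B)$, I would fix a firm $\mathbf{F}_B$ and expand $x$ in a $\kk$-basis of $B$ obtained by extending a basis of $F_0(B)$; letting $\mathbf{F}_A$ vary forces the $A$-coefficients to sit in $u_{good}(A)$, yielding $x\in u_{good}(A)\otimes F_0(B)$. A symmetric expansion in a basis of $A$ extending a basis of $u_{good}(A)$, with $\mathbf{F}_B$ varying over $\Phi_{firm}(B)$, then upgrades this to $x\in u_{good}(A)\otimes u_{firm}(B)$. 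The main obstacle is precisely this double extraction-of-coefficients step: one must use the unique expansion $A\otimes B=\bigoplus_\ell A\otimes \kk b_\ell$ so that membership in $F_0(A)\otimes F_0(B)$ can be detected coefficient by coefficient, and then iterate the trick on the other tensor factor.

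For (3), the additional input is that over an algebraically closed field every commutative domain is firm: the tensor product of two such domains is reduced and its spectrum is irreducible, hence a domain. Consequently $\Phi_{good}(B)=\Phi_{firm}(B)$ for commutative domains $B$, so $u_{good}(B)=u_{firm}(B)$, and (3) follows by squeezing the two inclusions in (2).
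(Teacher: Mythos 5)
Your proposal is correct and follows essentially the same route as the paper: the paper handles (1)--(3) by citing \cite{HNWZ1} (and \cite{RS} for (3)), and the arguments behind those citations --- restriction of a good filtration to a subalgebra, the tensor filtration with $H_0=F_0(A)\otimes G_0(B)$ whose associated graded is $\gr A\otimes\gr B$, and the interchange of intersection with tensor product via coefficient extraction --- are exactly the ones you spell out (compare the paper's own proof of the $w$-graded analogue, Lemma~\ref{xxlem2.11}(2)). The only caveat is in (3): ``firm'' in this paper quantifies over \emph{all} domains, not just commutative ones, so your reduced-plus-irreducible-spectrum argument shows only that a commutative domain stays a domain after tensoring with another commutative domain, not full firmness; the paper obtains the stronger statement from \cite[Lemma~1.1]{RS}. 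This is harmless for your argument, since in your proof of (2) you may equally let the second filtration range over all good filtrations of $B$ --- the associated graded rings in sight are then tensor products of commutative domains, handled by your algebraically closed field fact --- which gives $u_{good}(A\otimes B)\subseteq u_{good}(A)\otimes u_{good}(B)$ directly and sidesteps the noncommutative firmness question.
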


\begin{proof}
(1) This follows from the fact that every good 
negatively-${\mathbb N}$-filtration of $B$ 
induces a good negatively-${\mathbb N}$-filtration 
on $A$ by restriction, see \cite[Lemma 1.1(6)]{HNWZ1}.

(2) This is \cite[Proposition 2.3(1)]{HNWZ1}.

(3) When $\Bbbk$ is algebraically closed, 
$A\otimes B$ is a domain if and only if both $A$ and $B$ are domains \cite[Lemma 1.1]{RS}. Then $u_{firm}$ agrees with $u_{good}$. So the assertion follows from part (2) or \cite[Proposition 2.3(2)]{HNWZ1}. 

(4) Clear.
\end{proof}

Now we consider Poisson algebras (which are commutative as algebras). Let $A$ be a Poisson subalgebra of a 
Poisson algebra $B$. We define 
$$\Aut_{Poi.alg}(B\mid A)=\{\sigma \in \Aut_{Poi.alg}(B) \mid \sigma (a)=a,\; \forall \; a\in A\}.$$

\begin{theorem}
\label{xxthm2.4}
Let $A$ be a Poisson domain and $R$ be a Poisson firm 
domain. Suppose that $A$ is $u_{good}$-maximal and 
that $R$ is $u_{firm}$-minimal. 
\begin{enumerate}
\item[(1)]
Then, for 
every injective Poisson algebra morphism
$f: A\to A\otimes R$, 
${\rm{im}}(f)\subseteq A\otimes \Bbbk\subseteq A\otimes R$.
\item[(2)]
Let $C$ be a Poisson subalgebra of $A$.
Then there is a short exact sequence of groups
$$1\to \Aut_{Poi.alg}(A\otimes R\mid A)\to 
\Aut_{Poi.alg}(A\otimes R\mid C)\to 
\Aut_{Poi.alg}(A\mid C)\to 1.$$
\item[(3)]
If further $A$ is Dixmier, then $A$ is $R$-Dixmier.
\end{enumerate}
\end{theorem}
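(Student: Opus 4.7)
The plan is to establish part (1) via a filtration pullback argument, then obtain parts (2) and (3) as essentially formal consequences. For part (1), fix an injective Poisson morphism $f\colon A \to A\otimes R$. Given any firm filtration $\mathbf{F} \in \Phi_{firm}(R)$, define a filtration on $A\otimes R$ by $\widetilde{F}_{-i}(A\otimes R) := A\otimes F_{-i}(R)$, whose associated graded is $A\otimes\gr_{\mathbf{F}}R$; this is a domain because $\gr_{\mathbf{F}}R$ is firm and $A$ is a domain. Pulling back through $f$ produces $F'_{-i}(A):=f^{-1}(A\otimes F_{-i}(R))$, which one verifies to be a negatively-$\mathbb{N}$-filtration of $A$. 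Injectivity of $f$ yields an embedding $\gr_{F'}A \hookrightarrow A\otimes\gr_{\mathbf{F}}R$, so $F' \in \Phi_{good}(A)$. The $u_{good}$-maximality of $A$ then forces $F'_0(A) = A$, i.e., $f(A)\subseteq A\otimes F_0(R)$. Since this holds for every $\mathbf{F}\in\Phi_{firm}(R)$, intersecting gives
\[
f(A) \;\subseteq\; \bigcap_{\mathbf{F}\in\Phi_{firm}(R)} A\otimes F_0(R) \;=\; A\otimes u_{firm}(R) \;=\; A\otimes\Bbbk.
\]

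For part (2), I would define $\pi\colon \Aut_{Poi.alg}(A\otimes R \mid C) \to \Aut_{Poi.alg}(A \mid C)$ by restricting each $\sigma$ to $A\otimes\Bbbk \cong A$. To see this is well-defined, apply part (1) to both $\sigma|_A$ and $(\sigma^{-1})|_A$, which are injective Poisson morphisms $A\to A\otimes R$, to conclude $\sigma(A\otimes\Bbbk) = A\otimes\Bbbk$. The kernel of $\pi$ is by definition $\Aut_{Poi.alg}(A\otimes R\mid A)$, giving exactness at the middle and injectivity of the left-hand arrow. For surjectivity, any $\tau\in \Aut_{Poi.alg}(A\mid C)$ lifts to $\tau\otimes\operatorname{id}_R$, which is a Poisson automorphism of $A\otimes R$ fixing $C$ by a direct computation on the tensor-product Poisson bracket.

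Part (3) is immediate from (1): given an injective Poisson morphism $f\colon A\to A\otimes R$, part (1) confines $f(A)$ to $A\otimes\Bbbk\cong A$. Composing with this isomorphism produces an injective Poisson endomorphism of $A$, which is bijective by the Dixmier hypothesis; hence $f(A) = A\otimes\Bbbk$, showing that $A$ is $R$-Dixmier.

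The chief obstacle lies in part (1): producing the pullback filtration $F'$ is formal, but verifying that it is \emph{good} requires $A\otimes\gr_{\mathbf{F}}R$ to be a domain, which is precisely why the hypothesis pairs $u_{firm}$-minimality of $R$ with $u_{good}$-maximality of $A$ rather than using a single class of filtrations on both sides. Once this compatibility between the ``good'' and ``firm'' notions is in place, the tensor-and-pullback construction together with Lemma~\ref{xxlem2.3}(4) delivers the result, and parts (2) and (3) follow by standard manipulations.
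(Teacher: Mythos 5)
Your proposal is correct and follows essentially the same route as the paper: your part (1) is the paper's chain $\overline{A}=u_{good}(\overline{A})\subseteq u_{good}(A\otimes R)\subseteq u_{good}(A)\otimes u_{firm}(R)=A\otimes \Bbbk$, with the only difference being that you reprove inline (via the product filtration $A\otimes F_{-i}(R)$, its pullback along $f$, and the intersection over all firm filtrations) the content of Lemma~\ref{xxlem2.3}(1)--(2) that the paper simply cites. Parts (2) and (3) coincide with the paper's arguments, your explicit lift $\tau\otimes \mathrm{id}_R$ merely supplying the exactness check the paper labels routine.
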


\begin{proof}
(1) Let $f\colon A\longrightarrow A\otimes R$ be an injective Poisson 
algebra morphism and let $\overline{A}$ be the image
of $f$. Then $\overline{A}$ is a Poisson subalgebra 
of $A\otimes R$. Then we have
$$\begin{aligned}
\overline{A}& =u_{good}(\overline{A}) \qquad\qquad\qquad \qquad 
{\text{as $A\cong \overline{A}$ is 
$u_{good}$-maximal}}\\
&\subseteq u_{good}(A\otimes R) \qquad\qquad \qquad
{\text{by Lemma \ref{xxlem2.3}(1)}}\\
&\subseteq u_{good}(A)\otimes u_{firm}(R) \;\qquad\;\;
{\text{by Lemma \ref{xxlem2.3}(2)}}\\
&= u_{good}(A)\otimes \Bbbk \qquad \; \qquad \qquad
{\text{as $R$ is $u_{firm}$-minimal}}\\
&=A\otimes \Bbbk \qquad \qquad \; \qquad\qquad\quad
{\text{as $A$ is 
$u_{good}$-maximal.}}
\end{aligned}$$

(2) Let $\phi$ be in $\Aut_{Poi.alg}(A\otimes R\mid C)$ and let $f=\phi\mid_{A\otimes \Bbbk}\colon A\otimes \Bbbk \longrightarrow A\otimes R$ be the restriction of $\phi$ to $A\otimes \Bbbk$ that is identified with $A$. By 
part (1), ${\rm{im}}(f)\subseteq A\otimes \Bbbk$. So we can consider $f$ as a Poisson algebra endomorphism of $A$. Since $\phi$ is invertible, we can apply part (1) to the inverse of $\phi$. Hence $f$ is bijective. This implies that there is a map from $\Aut_{Poi.alg}(A\otimes R\mid C)\to \Aut_{Poi.alg}(A\mid C)$ when $C$ is identified with $C\otimes \Bbbk$.
Now it is routine to check that the short sequence is exact.

(3) By the proof of part (1), every injective Poisson algebra morphism
$f: A\to A\otimes R$,  induces
an injective map $\pi\colon A\stackrel{f}{\cong} \overline{A}\hookrightarrow 
A\otimes \Bbbk\cong A$. Since $A$ is Dixmier, 
$\pi$ is a bijection, which implies that 
${\rm{im}}(f)=\overline{A}=A\otimes \Bbbk$ as required. Therefore, $A$ is $R$-Dixmier.
\end{proof}

\subsection{Relative Dixmier property for uniparameter simple Poisson torus} In this subsection, we establish some results 
concerning the relative Dixmier property for some uniparameter Poisson torus $L_{\Lambda}(\Bbbk)$. First, we start with an example. 
\begin{example}
\label{xxexa2.5}
Let $A$ be the Poisson torus $\Bbbk[x^{\pm 1}, y^{\pm 1}]$ with Poisson bracket determined by 
$$\{x,y\}=q xy$$ for some $0\neq q\in \Bbbk$. It is easily checked that $A$ is a simple Poisson domain (for example, one can check condition~(3) of Lemma~\ref{xxlem2.7}). 
We claim that $A$ is Dixmier. To see this, we let $\phi\colon A\longrightarrow A$ be a Poisson algebra 
morphism. Note that $\phi$ preserves invertible elements and all invertible elements
in $A$ are of the form $cx^iy^j$ for some $c\in \Bbbk^{\times}$ and $i,j\in {\mathbb Z}$. 
Let $\phi(x)=a x^{i}y^j$ and $\phi(y)=b x^{f} y^g$
where $a,b\in \Bbbk^{\times}$ are nonzero scalars and $i, j , f, g\in \mathbb Z$. Then applying $\phi$ to $\{x,y\}=q xy$ leads to 
$\{x^i y^j, x^f y^g\}=q x^{i+f}y^{j+g}$. 
By an easy computation, we shall have 
$$ \{x^i y^j, x^f y^g\}= (ig-jf) q x^{i+f}y^{j+g}$$
which implies that $ig-jf=1$. So $\phi$ is invertible, indeed we have that $ \phi (c_1 x^{g} y^{-j}) = x$ and 
$ \phi (c_2 y^{i} x^{-f}) = y$
for some scalars $c_1, c_2\in \Bbbk^{\times}$. Therefore, the Poisson algebra $A$ is Dixmier. By \cite[Corollary 1.4]{HNWZ1}, 
$A$ is also $u_{good}$-maximal. If $R$ is a connected graded firm Poisson domain, then by Theorem \ref{xxthm2.4}, $A$ is also $R$-Dixmier. Note that the Dixmier property of $A$ also follows from \cite[Thorem 3.7]{CO}
\end{example}

In general, the above direct verification won't work for the Poisson torus $L_{\Lambda}(\Bbbk)$ when $n\geq 3$. Fortunately, we are able to adapt the proof of Richard \cite{Ric} concerning the Dixmier property for the simple uniparameter quantum torus to establish the following Poisson analogue.

\begin{theorem}
\label{xxthm2.6}
Let $\Lambda =\lambda M$ for some $\lambda \in \Bbbk^{\times}$ and a skew-symmetric matrix $M=(m_{ij})\in M_{n}(\mathbb{Z})$ with integer entries such that $L_{\Lambda}(\Bbbk)$ is a simple Poisson torus. Then each Poisson endomorphism of $L_{\Lambda}(\Bbbk)$ is an automorphism. That is, such a simple Poisson torus $L_{\Lambda}(\Bbbk)$ is Dixmier and thus $R$-Dixmier for any connected graded Poisson firm domain $R$. 
\end{theorem}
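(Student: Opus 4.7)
The plan is to follow the Poisson analogue of Richard's argument \cite{Ric} for uniparameter quantum tori. First I would reduce a Poisson endomorphism $\phi\colon L_\Lambda(\Bbbk) \to L_\Lambda(\Bbbk)$ to combinatorial data. As a commutative algebra $L_\Lambda(\Bbbk)$ coincides with the Laurent polynomial ring $\Bbbk[x_1^{\pm 1}, \ldots, x_n^{\pm 1}]$, so its group of units is $\Bbbk^{\times} \cdot \{x^\alpha : \alpha \in \mathbb{Z}^n\}$, where $x^\alpha := x_1^{\alpha_1}\cdots x_n^{\alpha_n}$. Since each generator $x_i$ is a unit and $\phi$ is a unital algebra morphism, $\phi(x_i)$ must be a unit, so $\phi(x_i) = c_i\, x^{\alpha^{(i)}}$ for some $c_i \in \Bbbk^{\times}$ and $\alpha^{(i)} \in \mathbb{Z}^n$.

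Next, I would translate Poisson compatibility into linear algebra. A direct Leibniz computation gives
\[
\{x^\alpha, x^\beta\} = (\alpha^T \Lambda \beta)\, x^{\alpha+\beta} \qquad \text{for all } \alpha, \beta \in \mathbb{Z}^n.
\]
Applying this to $\phi(\{x_i, x_j\}) = \{\phi(x_i),\phi(x_j)\}$ and cancelling the common monomial $c_i c_j\, x^{\alpha^{(i)} + \alpha^{(j)}}$ reduces the Poisson condition to $(\alpha^{(i)})^T \Lambda\, \alpha^{(j)} = \lambda_{ij}$ for all $i,j$. Assembling $\alpha^{(1)}, \ldots, \alpha^{(n)}$ as the columns of an integer matrix $A \in M_n(\mathbb{Z})$ rewrites this as $A^T \Lambda A = \Lambda$.

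The crucial step, and the one I expect to require the most care, is extracting $\det \Lambda \neq 0$ from the hypothesis that $L_\Lambda(\Bbbk)$ is Poisson simple. My plan is the following: if $\det M = 0$, pick a nonzero $\alpha \in \ker(M) \cap \mathbb{Z}^n$ and observe that $x^\alpha$ is then Poisson central, so the principal ideal $(x^\alpha - 1)$ is automatically a Poisson ideal; it is proper because evaluating every $x_i$ at $1 \in \Bbbk$ sends $x^\alpha - 1$ to $0$, contradicting simplicity. With $\det \Lambda \neq 0$ in hand, taking determinants in $A^T \Lambda A = \Lambda$ yields $(\det A)^2 = 1$, and since $A \in M_n(\mathbb{Z})$ this forces $\det A = \pm 1$ and $A \in GL_n(\mathbb{Z})$. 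Consequently $\phi$ permutes the monomial basis $\{x^\gamma : \gamma \in \mathbb{Z}^n\}$ up to nonzero scalars, and so is a $\Bbbk$-linear bijection, proving the Dixmier property.

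For the $R$-Dixmier part I would invoke Theorem~\ref{xxthm2.4}(3). The $u_{good}$-maximality of $L_\Lambda(\Bbbk)$ is essentially automatic: in any good negatively-$\mathbb{N}$-filtration $\mathbf{F}$, the associated graded $\gr_{\mathbf{F}} L_\Lambda(\Bbbk)$ is a domain, so filtration degree is additive on nonzero elements; this forces every unit (in particular every $x_i$) into $F_0$, whence $F_0 = L_\Lambda(\Bbbk)$. On the other hand, a connected graded Poisson firm domain $R$ is $u_{firm}$-minimal because its intrinsic grading provides a firm filtration with $F_0 = \Bbbk$. Theorem~\ref{xxthm2.4}(3) then delivers the $R$-Dixmier conclusion.
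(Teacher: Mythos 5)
Your proof is correct, but it takes a more direct route than the paper's. The paper first proves an auxiliary equivalence (Lemma~\ref{xxlem2.8}, modeled on Richard's quantum-torus criterion): it reformulates the Dixmier property in terms of monomial subalgebras, passes to a basis of the subgroup $\mathbb{S}=\{b\in\Z^n : x^b\in\phi(L_\Lambda)\}$ via Smith normal form with elementary divisors $p_1\mid\cdots\mid p_n$, and reduces everything to showing $p_1=\cdots=p_n=1$; this last step is then done in the proof of Theorem~\ref{xxthm2.6} by essentially the same determinant-over-$\Z$ computation you use, after first noting (Lemma~\ref{xxlem2.7}) that simplicity forces $\det M\neq 0$. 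You bypass Lemma~\ref{xxlem2.8} entirely: writing $\phi(x_i)=c_i x^{\alpha^{(i)}}$ (units are scalar monomials) and encoding the Poisson condition as $A^{T}\Lambda A=\Lambda$ with $A\in M_n(\Z)$, nondegeneracy of $\Lambda$ gives $(\det A)^2=1$, hence $A\in \mathrm{GL}_n(\Z)$ and bijectivity of $\phi$ follows since $\gamma\mapsto A\gamma$ permutes the monomial basis up to scalars. Your derivation of $\det\Lambda\neq 0$ from simplicity (a nonzero integer kernel vector would yield the proper nonzero Poisson ideal $(x^\alpha-1)$) is a correct stand-in for Lemma~\ref{xxlem2.7}. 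What the paper's longer route buys is the intermediate characterizations of Lemma~\ref{xxlem2.8} (in particular the subalgebra formulation, parallel to the quantum case), which may be of independent use; what your route buys is brevity and self-containment. For the $R$-Dixmier conclusion you and the paper proceed identically via Theorem~\ref{xxthm2.4}(3); the only difference is that you verify $u_{good}$-maximality of $L_\Lambda(\Bbbk)$ directly (additivity of filtration degree in a good filtration forces all units into $F_0$), whereas the paper cites \cite{HNWZ1}, and you justify $u_{firm}$-minimality of a connected graded firm domain by the filtration coming from its grading, which the paper leaves implicit; both verifications are correct.
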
 

To proceed, we first need to establish two auxiliary lemmas. The first one is a well-known observation, whose proof can be found in \cite{GL, Van}. We state it with a sketchy proof.  
\begin{lemma}
\label{xxlem2.7}
The following statements are equivalent for 
$L_{\Lambda}(\Bbbk)$
with respect to the Poisson bracket determined by~$\Lambda$.
\begin{enumerate}
\item [(1)] The Poisson torus $L_{\Lambda}(\Bbbk)$ is simple as a Poisson algebra;
\item [(2)] The Poisson center of $L_{\Lambda}(\Bbbk)$ is reduced to the base field $\Bbbk$;
\item [(3)] There does not exist a tuple $a=(a_{1}, \cdots, a_{n}) \in \Z^{n}\setminus \{0=(0, \cdots, 0)\}$ 
such that
\[
\sum_{i=1}^{n} a_{i} \lambda_{ij}=0, \quad \forall 1\leq j\leq n.
\]
\end{enumerate}
\end{lemma}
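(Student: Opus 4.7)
The plan is to base everything on the monomial Poisson bracket formula. Writing $x^u=x_1^{u_1}\cdots x_n^{u_n}$ for $u\in\Z^n$, repeated application of the Leibniz rule yields $\{x^u,x^v\}=\langle u,\Lambda v\rangle\,x^{u+v}$, where $\langle u,\Lambda v\rangle:=\sum_{i,j}u_i\lambda_{ij}v_j$. Since $\Lambda$ is skew-symmetric, the vanishing of $\langle a,\Lambda v\rangle$ for every $v\in\Z^n$ is equivalent both to $\Lambda a=0$ and to $\sum_i a_i\lambda_{ij}=0$ for every $j$, which is exactly the condition in~(3). I will close the circle via $(3)\Leftrightarrow(2)$, $(1)\Rightarrow(3)$, and $(3)\Rightarrow(1)$.

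For $(3)\Leftrightarrow(2)$, expand an arbitrary element as $f=\sum_a c_a x^a$ and compute $\{f,x^v\}=\sum_a c_a\langle a,\Lambda v\rangle\,x^{a+v}$; by linear independence of distinct monomials, $f$ is Poisson central iff $\langle a,\Lambda v\rangle=0$ for every $a$ in its support and every $v$, so the Poisson center is the $\Bbbk$-span of $\{x^a:\Lambda a=0\}$ and collapses to $\Bbbk$ exactly when $a=0$ is the only solution. For $(1)\Rightarrow(3)$: if some nonzero $a\in\Z^n$ satisfies $\Lambda a=0$, then $x^a$ is Poisson central, so the algebra ideal $(x^a-1)$ is automatically Poisson-stable (any ideal generated by Poisson-central elements is a Poisson ideal by the Leibniz rule), and it is nonzero and proper since $L_{\Lambda}(\Bbbk)/(x^a-1)$ surjects onto the group algebra of the nontrivial abelian group $\Z^n/\Z a$, contradicting Poisson simplicity.

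The crux is $(3)\Rightarrow(1)$, which I would prove by a standard support-minimization argument. Let $I$ be a nonzero Poisson ideal and pick $0\neq f\in I$ whose monomial support $S$ has minimal cardinality; after multiplying by a suitable unit $x^{-a_0}$ with $a_0\in S$ I may assume $0\in S$ and that the constant coefficient of $f$ is $1$. For any $v\in\Z^n$ the element $x^{-v}\{f,x^v\}=\sum_{a\in S\setminus\{0\}}c_a\langle a,\Lambda v\rangle\,x^a$ lies in $I$ and has strictly smaller support than $f$, so by minimality it must vanish identically; hence $\langle a,\Lambda v\rangle=0$ for every $a\in S\setminus\{0\}$ and every $v\in\Z^n$, and~(3) forces $S=\{0\}$, i.e., $1\in I$. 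The main obstacle is precisely this support-reduction step: one must verify that the normalization to introduce a constant term is always possible (guaranteed since monomials are units) and that bracketing with $x^v$ and then dividing by $x^v$ really strips off the constant term, so the resulting support is strictly smaller than $S$. The remaining verifications are purely combinatorial bookkeeping.
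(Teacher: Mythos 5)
Your proof is correct, but it is organized quite differently from the paper's. The paper only proves the equivalence $(2)\Leftrightarrow(3)$ directly (using the $\mathbb{Z}^n$-grading to reduce to central monomials, exactly as in your computation of $\{f,x^v\}=\sum_a c_a\langle a,\Lambda v\rangle x^{a+v}$), and it outsources the harder equivalence $(1)\Leftrightarrow(2)$ to the literature (Vancliff's lemma, see also Goodearl--Launois). You instead give a self-contained argument for that part: the central monomial $x^a$ with $\Lambda a=0$ produces the proper nonzero Poisson ideal $(x^a-1)$ for $\neg(3)\Rightarrow\neg(1)$, and the minimal-support argument (normalize $0$ into the support, bracket with $x^v$, divide by the unit $x^v$ to strip the constant term, and invoke minimality) gives $(3)\Rightarrow(1)$; this is essentially the standard proof behind the cited results, so nothing is lost, and what it buys is independence from the references at the cost of a page of elementary bookkeeping. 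One cosmetic point: in the $(1)\Rightarrow(3)$ step you do not actually need $\mathbb{Z}^n/\mathbb{Z}a$ to be nontrivial (which can fail only in the degenerate case $n=1$, excluded here since $n\geq 2$); properness of $(x^a-1)$ only requires the quotient $\Bbbk[\mathbb{Z}^n/\mathbb{Z}a]$ to be a nonzero ring, which always holds. Also note the paper's sufficiency check for centrality is against the generators $x_j$ only, whereas you check against all monomials $x^v$; both are fine since $\{f,-\}$ is a derivation.
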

\begin{proof}
The equivalence of (1) and (2) is due to Vancliff \cite[Lemma 1.2]{Van}, see also \cite[Corollary 3.2]{GL}. Since $\Bbbk[x_{1}^{\pm 1}, \cdots, x_{n}^{\pm 1}]$ is $\mathbb{Z}^{n}$-graded Poisson algebra with ${\rm deg}\, (x_{i})=e_{i}=(0, \cdots, 1, \cdots, 0)$ for $i=1, \cdots, n$, each Poisson-central element in $L_{\Lambda}(\Bbbk)$ is a sum of scalar multiples of monomials in $x_{1}^{\pm 1}, \cdots, x_{n}^{\pm 1}$, each of which is Poisson-central. Without loss of generality, we may assume that each Poisson-central element of $L_{\Lambda}(\Bbbk)$ is of the form: $x=\prod_{i=1}^{n} x_{i}^{a_{i}}$ with $a_{i}\in \mathbb{Z}$. Then we shall have $\{x, x_{j}\}=0$ for $j=1, \cdots, n$. As a result, we have
\[
\sum_{i=1}^{n} a_{i}\lambda_{ij} =0
\]
for $j=1, \cdots, n$. Now the equivalence of (2) and (3) follows. 
\end{proof}

\begin{lemma}
\label{xxlem2.8}
Let $A=L_{\Lambda}(\Bbbk)=\Bbbk_{\Lambda}[x_{1}^{\pm 1}, \cdots, x_{n}^{\pm 1}]$ be a simple Poisson torus of rank $n$ under 
the Poisson bracket determined by $\Lambda$. The following statements are equivalent.
\begin{enumerate}
\item [(1)] Every Poisson endomorphism of $A$ is an automorphism;
\item [(2)] Let $ \mathcal{Q} \subseteq A$ be a Poisson subalgebra of $A$ such that $\mathcal{Q}$ is isomorphic to $A$ as a Poisson algebra. Then $ \mathcal{Q} =A$;
\item [(3)] For any skew-symmetric matrix $\Lambda^{\prime}=(\lambda_{ij}^{\prime})\in M_{n}(\Bbbk)$ such that $A=L_{\Lambda^{\prime}}(\Bbbk)=\Bbbk_{\Lambda^{\prime}}[z_{1}^{\pm 1}, \cdots, z_{n}^{\pm 1}]$ 
as a Poisson torus, if there exist an invertible integer matrix $G=(g_{ij}) \in {\rm GL}_{n}(\Z)$ and positive 
integers $p_{1}, \cdots, p_{n}$ such that
\[
\lambda_{ij}^{\prime}=\sum_{1\leq k, t\leq n} p_{k}p_{t} g_{ki} g_{tj}\lambda_{kt}^{\prime}, \quad \forall 1\leq i, j\leq n,
\]
then $p_{1}= \cdots= p_{n}=1$. 
\end{enumerate}
\end{lemma}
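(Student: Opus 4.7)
The plan is to prove $(1)\Leftrightarrow(2)$ and $(1)\Leftrightarrow(3)$ through the following dictionary between Poisson endomorphisms of a Poisson torus and integer matrices. Any Poisson endomorphism $\phi$ of $L_\Lambda=\Bbbk[x_1^{\pm 1},\ldots,x_n^{\pm 1}]$ must send units to units, and the units of $L_\Lambda$ are exactly the scaled monomials $\Bbbk^\times x^v$ with $v\in\mathbb{Z}^n$; hence $\phi(x_i)=\alpha_i x^{v_i}$ for some $\alpha_i\in\Bbbk^\times$ and $v_i\in\mathbb{Z}^n$. Collecting the $v_i$ as the columns of a matrix $V\in M_n(\mathbb{Z})$, the formula $\{x^a,x^b\}=(a^T\Lambda b)x^{a+b}$ reduces bracket-preservation to the single identity $V^T\Lambda V=\Lambda$, and $\phi$ is surjective (equivalently bijective, by Poisson simplicity of $A$) if and only if $V\in\operatorname{GL}_n(\mathbb{Z})$. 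By Lemma~\ref{xxlem2.7}, simplicity of $L_\Lambda$ implies $\Lambda$ has trivial rational kernel and hence $\det\Lambda\neq 0$, so $V^T\Lambda V=\Lambda$ already forces $(\det V)^2=1$; this will be used when Smith normal form is applied.

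For $(1)\Leftrightarrow(2)$, the simplicity of $A$ makes every Poisson endomorphism $\phi$ automatically injective, since $\ker\phi$ is a proper Poisson ideal (as $\phi(1)=1$) and therefore zero. Consequently $\phi(A)$ is always a Poisson subalgebra of $A$ isomorphic to $A$, and the correspondence $\phi\leftrightarrow\phi(A)$ (combined, in the other direction, with composing an isomorphism $A\to\mathcal{Q}$ with the inclusion $\mathcal{Q}\hookrightarrow A$) makes the two statements formally equivalent.

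For $(1)\Rightarrow(3)$, I would take $\Lambda'$, $G\in\operatorname{GL}_n(\mathbb{Z})$, and positive integers $p_i$ satisfying the displayed identity, and define $\phi\colon L_{\Lambda'}\to L_{\Lambda'}$ by $\phi(z_j)=\prod_k z_k^{p_k g_{kj}}$. Its exponent matrix is $V=DG$ with $D=\operatorname{diag}(p_1,\ldots,p_n)$, and the displayed identity is precisely $V^T\Lambda'V=\Lambda'$, so $\phi$ is a Poisson endomorphism of $A$; by (1) it is an automorphism, forcing $V\in\operatorname{GL}_n(\mathbb{Z})$, hence $D\in\operatorname{GL}_n(\mathbb{Z})$, hence $p_i=1$. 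For $(3)\Rightarrow(1)$, start with any Poisson endomorphism $\phi$ of $L_\Lambda$ with exponent matrix $V$, so $V^T\Lambda V=\Lambda$ and $\det V=\pm 1$. Smith normal form gives $V=PDQ$ with $P,Q\in\operatorname{GL}_n(\mathbb{Z})$ and $D=\operatorname{diag}(p_1,\ldots,p_n)$, $p_i>0$. Change generators by setting $z_i=x^{P_{\cdot,i}}$; this yields an alternative Poisson-torus presentation $A=L_{\Lambda'}$ with $\Lambda'=P^T\Lambda P$, and a direct calculation shows that in the new generators the exponent matrix of $\phi$ becomes $V'=P^{-1}VP=D(QP)=DG$ with $G:=QP\in\operatorname{GL}_n(\mathbb{Z})$. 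The relation $(V')^T\Lambda'V'=\Lambda'$ holds, so (3) applies and forces $p_i=1$ for every $i$; then $D=I$ and $V=PQ\in\operatorname{GL}_n(\mathbb{Z})$, so $\phi$ is an automorphism.

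The main obstacle is the matrix bookkeeping in $(3)\Rightarrow(1)$: one must verify carefully that a generator change by $P\in\operatorname{GL}_n(\mathbb{Z})$ conjugates the exponent matrix as $V\mapsto P^{-1}VP$ and the bracket matrix as $\Lambda\mapsto P^T\Lambda P$, so that the Smith decomposition $V=PDQ$ puts the endomorphism into the exact shape $V'=DG$ with $G\in\operatorname{GL}_n(\mathbb{Z})$ demanded by (3). Once the conventions are set up, the remaining arguments are standard linear algebra combined with Poisson simplicity.
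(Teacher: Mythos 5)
Your dictionary between Poisson endomorphisms and integer exponent matrices, the equivalence $(1)\Leftrightarrow(2)$, the construction in $(1)\Rightarrow(3)$, and the Smith-normal-form mechanics of $(3)\Rightarrow(1)$ all match the paper's argument in substance: the paper proves $(3)\Rightarrow(2)$ by choosing a basis $c_1,\dots,c_n$ of $\Z^n$ such that $p_1c_1,\dots,p_nc_n$ is a basis of the lattice $\mathbb{S}=\{b\in\Z^n\mid x^b\in\mathcal{Q}\}$, which is exactly your Smith decomposition in different clothing. The genuine problem is your claim that Poisson simplicity forces $\det\Lambda\neq 0$, hence $(\det V)^2=1$. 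Lemma \ref{xxlem2.8} is stated for an arbitrary skew-symmetric $\Lambda\in M_n(\Bbbk)$, not only for the uniparameter matrices $\Lambda=\lambda M$ of Theorem \ref{xxthm2.6}. Lemma \ref{xxlem2.7} only excludes nonzero \emph{integer} (equivalently, rational) vectors from the kernel of $\Lambda$; it does not exclude a kernel defined over $\Bbbk$. For instance, with $n=3$ every skew-symmetric matrix is singular, yet $\lambda_{12}=1$, $\lambda_{13}=t$, $\lambda_{23}=t^2$ with $t\in\Bbbk$ transcendental over $\Q$ gives a Poisson simple torus (the kernel is spanned by $(t^2,-t,1)$, which is not proportional to any integer vector) with $\det\Lambda=0$. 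Simplicity implies nonsingularity of $\Lambda$ precisely in the uniparameter case, and that is exactly where the paper runs the determinant argument --- inside the proof of Theorem \ref{xxthm2.6}, not inside Lemma \ref{xxlem2.8}.

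Note also that if $(\det V)^2=1$ really did follow from simplicity alone, then every Poisson endomorphism would automatically have $V\in\operatorname{GL}_n(\Z)$, so statement (1) would hold unconditionally and both condition (3) and the uniparameter hypothesis of Theorem \ref{xxthm2.6} would be superfluous; that should have been a warning sign. Fortunately your $(3)\Rightarrow(1)$ needs much less than $\det V=\pm1$: it only needs $V$ nonsingular, so that the Smith normal form $V=PDQ$ has strictly positive diagonal entries $p_i$. This follows from injectivity of $\phi$, which you already derived from simplicity: a nonzero $m\in\Z^n$ with $Vm=0$ would give $\phi(x^{m})\in\Bbbk^{\times}$, i.e.\ a nonconstant element $x^{m}-c$ in $\ker\phi$, a contradiction. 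With the sentence ``$\det V=\pm1$'' replaced by this rank argument (the analogue of the paper's observation that $\mathbb{S}$ has rank $n$), your proof is correct and is essentially the paper's.
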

\begin{proof} 
$(1) \Rightarrow (2)$: Suppose that $\mathcal{Q} \subseteq A$ is a Poisson subalgebra of the simple Poisson torus $A=\Bbbk_{\Lambda}[x_{1}^{\pm 1}, \cdots, x_{n}^{\pm 1}]$ 
such that $A\stackrel{\phi}{\cong} \mathcal{Q}$ as Poisson algebras. Denote the composition of the embedding of $\mathcal{Q}$ into $A$ with the isomorphism 
$\phi$ by 
\[
\psi: A\stackrel{\phi}{\longrightarrow} \mathcal{Q} \hookrightarrow A.
\]
Obviously, $\psi$ is an (injective) Poisson endomorphism of $A$. By (1), we know that $\psi$ is an automorphism, which implies that $\mathcal{Q}=A$. 

$(2) \Rightarrow (1)$: Let $\phi \colon A \longrightarrow A$ be a Poisson endomorphism of $A$. Since $A$ is a simple Poisson torus, we know that $\phi$ is injective. 
As a result, $A$ is isomorphic to $\mathcal{Q}\colon =\phi(A)$ as a Poisson algebra, and $\mathcal{Q}\subseteq A$ is a Poisson subalgebra of $A$. By (2), we know that 
$\mathcal{Q}=A$, which implies that $\phi$ is an automorphism. 

$(2) \Rightarrow (3)$: Suppose $\Lambda^{\prime}=(\lambda_{ij}^{\prime})\in M_{n}(\Bbbk)$ is a skew-symmetric matrix such that 
$A=\Bbbk_{\Lambda^{\prime}}[z_{1}^{\pm 1}, \cdots, z_{n}^{\pm 1}]$ and there exist an invertible integer matrix 
$G=(g_{ij}) \in {\rm GL}_{n}(\Z)$ together with positive integers $p_{1}, \cdots, p_{n}$ such that
\[
\lambda_{ij}^{\prime}=\sum_{1\leq k,t\leq n} p_{k}p_{t} g_{ki}g_{tj} \lambda_{kt}^{\prime}, \quad \forall 1\leq i, j\leq n.
\]
Note that we have $\{z_{i}, z_{j}\}=\lambda_{ij}^{\prime} z_{i}z_{j}$ for $1\leq i, j\leq n$. Let $\mathcal{Q}$ be the Poisson subalgebra of $A$ generated by 
$y_{1}^{\pm 1}\colon =z_{1}^{\pm p_{1}}, \cdots, y_{n}^{\pm 1}\colon =z_{n}^{\pm p_{n}}$. Since $z_{1}, \cdots, z_{n}$ 
are algebraically independent and $p_{i}\geq 1$, we know that $\mathcal{Q}$ is indeed a Poisson torus of rank 
$n$ generated by $y_{1}^{\pm 1}, \cdots, y_{n}^{\pm 1}$ subject to the following Poisson bracket: 
\[
\{y_{i}, y_{j}\}=\{z_{i}^{p_{i}}, z_{j}^{p_{j}}\}=p_{i}p_{j} \lambda_{ij}^{\prime} y_{i}y_{j}
\]
for $1\leq i, j\leq n$. Let us define an algebra morphism
\[
\phi \colon A=\Bbbk[z_{1}^{\pm 1}, \cdots, z_{n}^{\pm 1}]\longrightarrow \mathcal{Q}=\Bbbk[y_{1}^{\pm 1}, \cdots, y_{n}^{\pm 1}]
\]
by setting
\[
\phi(z_{i})=\prod_{k=1}^{n} y_{k}^{g_{ki}}
\]
for $i=1, \cdots, n$. It is easy to check that 
\begin{eqnarray*}
\{\phi(z_{i}), \phi(z_{j})\} &=&\{\prod_{k=1}^{n} y_{k}^{g_{ki}}, \prod_{t=1}^{n} y_{t}^{g_{tj}}\}\\
&=& (\sum_{k, t=1}^{n} p_{k}p_{t} g_{ki} g_{tj} \lambda_{kt}^{\prime})(\prod_{k=1}^{n} y_{k}^{g_{ki}}) (\prod_{t=1}^{n} y_{t}^{g_{tj}})\\
&=& \lambda_{ij}^{\prime} (\prod_{k=1}^{n} y_{k}^{g_{ki}}) (\prod_{t=1}^{n} y_{t}^{g_{tj}})\\
&=& \lambda_{ij}^{\prime} \phi(z_{i}) \phi(z_{j})\\
&=& \phi(\lambda_{ij}^{\prime} z_{i} z_{j})\\
&=& \phi(\{z_{i}, z_{j}\})
\end{eqnarray*}
for any $1\leq i, j\leq n$. Therefore, $\phi$ is a Poisson algebra morphism. Since $G=(g_{ij})\in {\rm GL}_{n}(\Z)$ is an invertible 
integer matrix, $\phi$ is indeed a Poisson isomorphism. By (2), we know that 
\[
\Bbbk[z_{1}^{\pm p_{1}}, \cdots, z_{n}^{\pm p_{n}}]=\mathcal{Q}=A=\Bbbk[z_{1}^{\pm 1}, \cdots, z_{n}^{\pm 1}].
\]
As a result, we must have that $p_{1}=\cdots =p_{n}=1$.  

$(3) \Rightarrow (2)$: Suppose that $\mathcal{Q}\subseteq A=L_{\Lambda}(\Bbbk)=\Bbbk_{\Lambda}[x_{1}^{\pm 1}, \cdots, x_{n}^{\pm 1}]$ 
is a Poisson subalgebra of $A$ and $A$ is isomorphic to $\mathcal{Q}$ as a Poisson algebra. Therefore, there is an isomorphism of Poisson algebras:
\[
\phi \colon A=\Bbbk[x_{1}^{\pm 1}, \cdots, x_{n}^{\pm 1}] \longrightarrow \mathcal{Q}\subseteq A.
\]
Since $A=\Bbbk[x_{1}^{\pm 1}, \cdots, x_{n}^{\pm 1}]$ is generated by $x_{1}^{\pm 1}, \cdots, x_{n}^{\pm 1}$ and $\phi(x_{i})$ is a monomial 
in $\Bbbk[x_{1}^{\pm 1}, \cdots, x_{n}^{\pm 1}]$ for each $i=1, \cdots, n$, we know that  $\mathcal{Q}$ is generated by the monomials 
$y_{i}^{\pm 1}\colon =[\phi(x_{i})]^{\pm 1}=x^{\pm b_{i}}=(\prod_{k=1}^{n} x_{k}^{b_{ki}})^{\pm 1}$ 
for some $b_{i}=(b_{1i}, \cdots, b_{ni}) \in \mathbb{Z}^{n}$ with $i=1, \cdots, n$. As a result, the subset $\mathbb{S}=\{b\in \Z^{n}\mid x^{b}\in \mathcal{Q}\}$ 
is actually a subgroup of $\Z^{n}$. Since $\phi$ is an isomorphism of algebras, we must have that $\mathbb{S}$ is a free abelian group of the same rank as $\Z^{n}$. 
Now we can choose a basis $\{c_{1}, \cdots, c_{n}\}$ of $\Z^{n}$ such that $\{p_{1} c_{1}, \cdots,  p_{n} c_{n}\}$ is a basis of $\mathbb{S}$ (whose rank is equal to $n$) for 
some positive integers $p_{1}\mid p_{2}\mid \cdots \mid p_{n}$. Setting $z_{i}^{\pm 1}\colon =x^{\pm c_{i}}=(\prod_{k=1}^{n} x_{k}^{c_{ki}})^{\pm 1}$ for $i=1, \cdots, n$,  
we know that $A$ is the same as the Poisson torus $L_{\Lambda^{\prime}}(\Bbbk)=\Bbbk_{\Lambda^{\prime}}[z_{1}^{\pm 1}, \cdots, z_{n}^{\pm 1}]$, which is generated by 
$z_{1}^{\pm 1}, \cdots, z_{n}^{\pm 1}$ subject to the following Poisson bracket
\[
\{z_{i}, z_{j}\}=\lambda^{\prime}_{ij}z_{i}z_{j}
\]
where $\Lambda^{\prime}=(\lambda^{\prime}_{ij})$ with $\lambda_{ij}^{\prime} :=\sum_{1\leq k,t\leq n} c_{ki}c_{tj} \lambda_{kt}$ for $1\leq i, j\leq n$.  
As a result, $\mathcal{Q}$ is indeed the Poisson torus $\Bbbk[(z_{1}^{p_{1}})^{\pm 1}, \cdots, (z_{n}^{p_{n}})^{\pm 1}]$ generated by 
$z_{1}^{\pm p_{1}}, \cdots, z_{n}^{\pm p_{n}}$ subject to the following Poisson bracket: 
\[
\{z_{i}^{p_{i}}, z_{j}^{p_{j}}\}=p_{i}p_{j}\lambda_{ij}^{\prime}z_{i}^{p_{i}}z_{j}^{p_{j}},\quad \forall 1\leq i, j\leq n.
\]
Since the Poisson torus $L_{\Lambda^{\prime}}(\Bbbk)=\Bbbk_{\Lambda^{\prime}}[z_{1}^{\pm 1}, \cdots, z_{n}^{\pm 1}]$ (the same as $A$) is also isomorphic to the Poisson torus 
$\mathcal{Q}=\Bbbk[z_{1}^{\pm p_{1}}, \cdots, z_{n}^{\pm p_{n}}]$ as a Poisson algebra via $\phi\colon A \longrightarrow \mathcal{Q}$, there 
exists an invertible integer matrix $G=(g_{ij})\in {\rm GL}_{n}(\Z)$ such that $\phi(z_{i})=\alpha_{i} \prod_{k=1}^{n}  z_{k}^{p_{k} g_{ki}}$ where 
$\alpha_{i} \in \Bbbk^{\times}$ for $i=1, \cdots, n$. Therefore, we have that 
\[
\lambda_{ij}^{\prime}=\sum_{k, t=1}^{n} p_{k}p_{t} g_{ki} g_{tj} \lambda_{kt}^{\prime}
\]
for any $1\leq i, j\leq n$. Since $A=L_{\Lambda^{\prime}}(\Bbbk)=\Bbbk_{\Lambda^{\prime}}[z_{1}^{\pm 1}, \cdots, z_{n}^{\pm 1}]$ and $G$ is in ${\rm GL}_{n}(\mathbb{Z})$, we must 
have that $p_{1}=\cdots=p_{n}=1$ using (3). Therefore, we have that $\mathbb{S}=\Z^{n}$, which implies that $\mathcal{Q}=\Bbbk[x^{\pm c_{1}}, \cdots, x^{\pm c_{n}}]=\Bbbk[x_{1}^{\pm 1}, \cdots, x_{n}^{\pm 1}]=A$. 
\end{proof}

\begin{proof}[Proof of Theorem \ref{xxthm2.6}.] Note that $\Lambda =\lambda M$ for some $\lambda \in \Bbbk^{\times}$ and a skew-symmetric matrix $M=(m_{ij})\in M_{n}(\mathbb{Z})$ 
with integer entries.  Recall that $L_{\Lambda}(\Bbbk)$ is the Poisson torus generated 
by $x_{1}^{\pm 1}, \cdots, x_{n}^{\pm 1}$ subject to the following Poisson bracket: 
\[
\{x_{i}, x_{j}\}=\lambda_{ij} x_{i}x_{j}=\lambda m_{ij}x_{i}x_{j}
\] 
for $1\leq i\leq j\leq n$. Since the Poisson torus $L_{\Lambda}(\Bbbk)$ is Poisson simple, by Lemma \ref{xxlem2.7}, we 
know that the matrix $\Lambda$ is non-singular, which is equivalent to the fact that the matrix $M=(m_{ij})$ is non-singular. That is, ${\rm det}(M)\neq 0$. 

To show that $L_{\Lambda}(\Bbbk)$ is Dixmier, we prove that Condition~(3) of Lemma~\ref{xxlem2.8} holds.
Suppose there exist a skew-symmetric matrix $\Lambda^{\prime}=(\lambda_{ij}^{\prime})\in M_{n}(\Bbbk)$ such that $L_{\Lambda}(\Bbbk)=\Bbbk_{\Lambda^{\prime}}[z_{1}^{\pm 1}, \cdots, z_{n}^{\pm 1}]$ 
and an invertible integer matrix $G=(g_{ij})\in {\rm GL}_{n}(\mathbb{Z})$ together with positive integers $p_{1}, \cdots, p_{n}$ such that 

\begin{equation}\label{F2.2.1}
\tag{F2.2.1}
\lambda_{ij}^{\prime} =\sum_{k, t=1}^{n} p_{k} p_{t} g_{ki} g_{tj} \lambda_{kt}^{\prime}
\end{equation}

for $1\leq i, j\leq n$. We shall prove that $p_{1}=\cdots =p_{n}=1$. 

Since $\Bbbk_{\Lambda}[x_{1}^{\pm 1}, \cdots, x_{n}^{\pm 1}]=\Bbbk_{\Lambda^{\prime}}[z_{1}^{\pm 1}, \cdots, z_{n}^{\pm 1}]$ as Poisons algebras, 
there exists an invertible integer matrix $T=(t_{ij})\in {\rm GL}_{n}(\mathbb{Z})$ such that
\[
z_{i}=\prod_{k=1}^{n} x_{k}^{t_{ki}}, 
\] 
for $i=1, \cdots, n$ and so from 
$ \{z_i , z_j \} = \{ \prod_{k=1}^{n} x_{k}^{t_{ki}}, 
\prod_{l=1}^{n} x_{l}^{t_{lj}}
\} $, we have that 
$\Lambda^{\prime} =T^{t} \Lambda T$, where 
$T^t$
denotes the matrix transpose of $T$.
As a result, we have $\Lambda^{\prime}=T^{t} (\lambda M) T=\lambda (T^{t} M T)$. 
Setting $M^{\prime}=(m_{ij}^{\prime})=T^{t} M T$, then we have $\Lambda^{\prime}=\lambda M^{\prime}$ or equivalently 
$\lambda_{ij}^{\prime}=\lambda m_{ij}^{\prime}$ for $1\leq i, j\leq n$. Since ${\rm det}(M)\neq 0$, we have that ${\rm det}(M^{\prime})={\rm det}(T^{t} M T)=({\rm det}(T))^{2} {\rm det} (M)\neq 0$.  
This also implies that $\det(\Lambda') \neq 0.$
Now   
formula~(\ref{F2.2.1}) gives us 
\[
\Lambda' = G^t D \Lambda' D G, \quad \text{where } D = \mathrm{Diag}[p_1, \dots, p_n].
\]  
and consequently $
1 = {\det(G)}^2 (\prod_{i=1}^{n} p_{i})^{2}$. 
Because $G$ has entries in $\mathbb{Z}$, we conclude that $
\prod_{i=1}^{n} p_i = 1,$
and therefore  
\[
p_1 = \cdots = p_n = 1.
\]
By Lemma \ref{xxlem2.8}, we know that every Poisson endomorphism of $L_{\Lambda}(\Bbbk)$ is an automorphism. Thus, we have proved $L_{\Lambda}(\Bbbk)$ is Dixmier. 
Since $L_{\Lambda}(\Bbbk)$ is $u_{\mathrm{good}}$-maximal (see \cite[Corollary~1.4]{HNWZ1}) 
and $R$ is $u_{\mathrm{firm}}$-minimal, it follows from Theorem~\ref{xxthm2.4}(3) that 
$L_{\Lambda}(\Bbbk)$ is $R$-Dixmier.
\end{proof}

\subsection{Poisson $u$-invariants}
\label{xxsec2.3}
In this subsection, we define some secondary invariants associated with the $u$-invariants.

Let $A$ be a Poisson algebra with Poisson 
bracket $\{-,-\}$ (as an associative algebra, it is commutative). Let $w$ be an integer.
Recall from \cite[Definition 2.2]{HTWZ1} that
a {\it $w$-filtration} of $A$ is a descending sequence 
of subspaces ${\mathbf F}\colon =\{F_i(A)\subseteq A\}_{i\in {\mathbb Z}}$ satisfying 
\begin{enumerate}
\item[(F1)]
$F_{i}(A)\supseteq F_{i+1}(A)$ for all $i\in {\mathbb Z}$ and $1\in F_0(A)\setminus F_1(A)$,
\item[(F2)]
$A=\bigcup_{i\in {\mathbb Z}} F_{i}(A)$ and 
$0=\bigcap_{i\in {\mathbb Z}} F_{i}(A)$,
\item[(F3)]
$F_{i}(A) F_{j}(A)\subseteq F_{i+j} (A)$ 
for all $i,j\in {\mathbb Z}$,
\item[(F4)]
$\{F_i(A), F_j(A)\}\subseteq F_{i+j-w}(A)$ for all
$i,j\in {\mathbb Z}$.
\end{enumerate}
A $w$-filtration ${\mathbf F}$ is called 
{\it negatively-${\mathbb N}$-$w$-filtration} 
if $F_{i}=0$ for all $i\geq 1$. It is clear that
a negatively-${\mathbb N}$-$w$-filtration is
a special kind of a 
negatively-${\mathbb N}$-filtration which 
is defined in the previous subsection.

Given a $w$-filtration ${\mathbf F}$, the associated graded ring is defined to be
$$\gr_{\mathbf F} A\colon =\bigoplus_{i\in {\mathbb Z}} 
F_{i}(A)/F_{i+1}(A).$$
Following \cite[Definition 2.1]{HTWZ1}, that a Poisson algebra $P$ is called {\it Poisson $w$-graded} if
$P=\oplus_{i\in {\mathbb Z}} P_i$ is a ${\mathbb Z}$-graded 
commutative algebra, and  
$\{P_i, P_j\}\subseteq P_{i+j-w}$ for all $i,j\in {\mathbb Z}$.
 Clearly when 
 ${\mathbf F}$
 is a $w$-filtration, then 
 $\gr_{\mathbf F}A$ is a $w$-graded Poisson algebra. 
Recall that ${\mathbf F}$ is a {\sf good filtration} 
if $\gr_{\mathbf F} A$ is a domain. For a Poisson algebra $A$ and
an integer $w$, set

\begin{equation}
\label{F2.3.1}\tag{F2.3.1}
\Phi_{good,w}(A)
={\text{the set of all good negatively-${\mathbb N}$-$w$-filtartions of $A$}},
\end{equation}
and

\begin{equation}
\label{F2.3.2}\tag{F2.3.2}
\Phi_{firm,w}(A)
={\text{the set of all firm negatively-${\mathbb N}$-$w$-filtrations of $A$}}.
\end{equation}

Similar to Definition \ref{xxdef2.1}, we have the following definition.

\begin{definition}
\label{xxdef2.9}
Let $A$ be a Poisson domain. The {\it 
$u_{good,w}$-invariant} of $A$ is defined 
to be
$$u_{good,w}(A): = \bigcap_{{\mathbf F}\in 
\Phi_{good,w}(A)} F_0(A)$$
where the intersection $\bigcap$ runs over 
all filtrations in  $\Phi_{good,w}(A)$.
Similarly, when $A$ is a Poisson firm 
domain, the {\it $u_{firm,w}$-invariant} 
of $A$ is defined to be
$$u_{firm,w}(A): = \bigcap_{{\mathbf F}\in 
\Phi_{firm,w}(A)} F_0(A)$$
where the intersection $\bigcap$ runs over 
all filtrations in  $\Phi_{firm,w}(A)$.
\end{definition}

It is clear that $u_{good,w}(A)$ (resp. $u_{firm,w}(A)$) is a subalgebra of $A$. Since $\Phi_{good,w}(A)\subseteq \Phi_{good, w+1}(A)\subseteq \Phi_{good}(A)$, we have $u_{good,w}(A)\supseteq u_{good,w+1}(A)\supseteq u_{good}(A)$. It is unclear to us whether or not $u_{good}(A)=\bigcap_{w\in \Z} u_{good,w}(A)$.

\begin{definition}
\label{xxdef2.10}
Let $A$ be a Poisson domain and $w$ an integer.
\begin{enumerate}
\item[(1)]
We say $A$ is {\it $u_{good,w}$-maximal} if 
$u_{good,w}(A)=A$, or equivalently, 
$\Phi_{good,w}(A)$ consists of only the 
trivial filtration. 
\item[(2)]
We say $A$ is {\it $u_{good,w}$-minimal} if $u_{good,w}(A)=\Bbbk$.
\end{enumerate}
If $A$ is a firm Poisson domain, then 
the notions of {\it $u_{firm,w}$-maximal} 
and {\it $u_{firm,w}$-minimal}
are defined similarly.
\end{definition}

Let $B$ be the Poisson domain $P_{\Omega-\xi}$ in Example \ref{xxexa0.1} where $\Omega$ is 
an i.s. potential of degree $d\geq 5$, then, as we will see in Lemma \ref{xxlem2.14}, 
$B$ is $u_{good, (d-4)}$-maximal and $u_{good, (d-3)}$-minimal.

Similar to Lemma \ref{xxlem2.3} we have: 

\begin{lemma}
\label{xxlem2.11} Let $A$ and $B$ be two Poisson domains and $w$ be an integer.
\begin{enumerate}
\item[(1)]
If $A$ is a subalgebra of $B$, then $u_{good,w}(A)\subseteq u_{good,w}(B)$.
\item[(2)]
Suppose $B$ is a firm domain. Then 
$$u_{good,w}(A) \otimes u_{good,w}(B)\
\subseteq u_{good,w}(A\otimes B)
\subseteq u_{good,w}(A) \otimes 
u_{firm,w}(B).$$
\item[(3)]
Suppose $\Bbbk$ is algebraically closed. Then 
$u_{good,w}(A\otimes B)=u_{good,w}(A)
\otimes u_{good,w}(B)$.
\item[(4)]
If $B$ is a firm domain, then $u_{good,w}(B)
\subseteq u_{firm,w}(B)$.
\end{enumerate}
\end{lemma}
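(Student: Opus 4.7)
The plan is to parallel the proof of Lemma~\ref{xxlem2.3}; the one new ingredient is that the Poisson compatibility axiom~(F4) behaves well under pullback and tensor product of filtrations.

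For (1), the restriction $F'_i(A) := F_i(B) \cap A$ of any $\mathbf{F} \in \Phi_{good,w}(B)$ is a negatively-$\mathbb{N}$-$w$-filtration on $A$: axiom (F4) holds since $A$ is a Poisson subalgebra, so $\{F_i(B) \cap A,\, F_j(B) \cap A\} \subseteq F_{i+j-w}(B) \cap A$; and the inclusion $\gr_{\mathbf{F}'} A \hookrightarrow \gr_{\mathbf{F}} B$ shows $\gr_{\mathbf{F}'} A$ is a domain, so $\mathbf{F}' \in \Phi_{good,w}(A)$. Intersecting $u_{good,w}(A) \subseteq F'_0(A) \subseteq F_0(B)$ over all such $\mathbf{F}$ gives~(1).

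For (2), the forward inclusion follows from (1) by restricting any $\mathbf{H} \in \Phi_{good,w}(A \otimes B)$ to the Poisson subalgebras $A \otimes 1$ and $1 \otimes B$, obtaining $x \otimes 1,\, 1 \otimes y \in H_0$ and hence $x \otimes y \in H_0 \cdot H_0 \subseteq H_0$. For the backward inclusion, given $\mathbf{F} \in \Phi_{good,w}(A)$ and $\mathbf{G} \in \Phi_{firm,w}(B)$, consider the tensor product filtration $H_n := \sum_{i+j=n} F_i(A) \otimes G_j(B)$. The new verification is (F4), which follows cleanly from the Leibniz rule
\[
\{a_1 \otimes b_1, a_2 \otimes b_2\} = \{a_1, a_2\} \otimes b_1 b_2 + a_1 a_2 \otimes \{b_1, b_2\},
\]
with each summand landing in $H_{m+n-w}$. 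Since $\gr_{\mathbf{H}}(A \otimes B) \cong \gr_{\mathbf{F}} A \otimes \gr_{\mathbf{G}} B$ and $\gr_{\mathbf{G}} B$ is firm, the associated graded is a domain, so $\mathbf{H} \in \Phi_{good,w}(A \otimes B)$. The negatively-$\mathbb{N}$ condition collapses $H_0$ to $F_0(A) \otimes G_0(B)$, so every $z \in u_{good,w}(A \otimes B)$ lies in $F_0(A) \otimes G_0(B)$ for every such pair. Using the elementary identity $\bigcap_\alpha (V_\alpha \otimes W) = \left(\bigcap_\alpha V_\alpha\right) \otimes W$ for subspaces $V_\alpha$ of a vector space and any $W$, we intersect first over $\mathbf{F}$ and then over $\mathbf{G}$ to deduce $z \in u_{good,w}(A) \otimes u_{firm,w}(B)$.

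Parts (3) and (4) are then formal consequences. For (3), over an algebraically closed field every commutative $\Bbbk$-domain is firm by \cite[Lemma 1.1]{RS}; hence every good filtration on $B$ is automatically firm and $u_{firm,w}(B) = u_{good,w}(B)$, which combined with (2) gives equality. Part (4) is immediate from $\Phi_{firm,w}(B) \subseteq \Phi_{good,w}(B)$, since intersecting over a smaller family produces a larger subspace. The main potential obstacle, as in the associative case \cite{HNWZ1}, is the backward inclusion of (2); but the Leibniz-rule verification of (F4) for the tensor product filtration passes without difficulty, so no essentially new difficulty arises.
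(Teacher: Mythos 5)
Your proof is correct and follows essentially the same route as the paper: restriction of filtrations for (1), the tensor-product filtration $\mathbf{F}\otimes\mathbf{G}$ with the Leibniz-rule check of (F4) and the intersection--tensor interchange for (2), and the identifications $\Phi_{firm,w}\subseteq\Phi_{good,w}$ and $u_{firm,w}=u_{good,w}$ over an algebraically closed field for (4) and (3). The only difference is that you spell out details the paper delegates to citations of \cite{HNWZ1} (e.g.\ that $\gr_{\mathbf{F}\otimes\mathbf{G}}(A\otimes B)\cong\gr_{\mathbf{F}}A\otimes\gr_{\mathbf{G}}B$ with $H_0=F_0(A)\otimes G_0(B)$), which is fine.
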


\begin{proof}
(1,3,4) See the proof of Lemma \ref{xxlem2.3}.

(2) We modify the proof of \cite[Proposition 2.3(1)]{HNWZ1} as below. Since $A\otimes \Bbbk$ is a Poisson subalgebra 
of $A\otimes B$, we have
$u_{good,w}(A)\otimes \Bbbk 
\subseteq u_{good,w}(A\otimes B)$ by part (1). 
Similarly, $\Bbbk\otimes u_{good,w}(B)\subseteq 
u_{good,w}(A\otimes B)$. Therefore 
$u_{good,w}(A)\otimes u_{good,w}(B)\subseteq 
u_{good,w}(A\otimes B)$.

For $u_{good,w}(A\otimes B)\subseteq u_{good,w}(A)
\otimes u_{firm,w}(B)$, let ${\mathbf F}$ be a 
good negatively-${\mathbb N}$-$w$-filtration of $A$ 
and ${\mathbf G}$ be a firm 
negatively-${\mathbb N}$-$w$-filtration of $B$. 
By \cite[Lemma 1.2(3)]{HNWZ1}, 
${\mathbf H}:={\mathbf F}\otimes {\mathbf G}$ is a 
good negatively-${\mathbb N}$-filtration of 
$A\otimes B$ with $H_0=F_0(A)\otimes G_0(B)$. 
By the definition of the product of Poisson algebras, 
{\small $$\{A_{i}\otimes B_j, A_{l}\otimes B_{k}\}
\subseteq \{A_{i}, A_{l}\}\otimes B_j B_{k}
+A_{i}A_{l}\otimes\{B_j,B_{k}\}
\subseteq A_{i+l-w}\otimes B_{j+k} + A_{i+l}\otimes B_{j+k-w}$$}
which implies that ${\mathbf H}$ is a
$w$-filtration. So 
$$\begin{aligned}
u_{good,w}(A\otimes B)&=\bigcap_{{\mathbf H}
\in \Phi_{good,w}(A\otimes B)} H_0\\
&\subseteq 
\bigcap_{{\mathbf H}={\mathbf F}\otimes {\mathbf G},
{\mathbf F}\in \Phi_{good,w}(A), {\mathbf G}\in \Phi_{firm,w}(B)} F_0(A)\otimes G_0(B)\\
&=[\bigcap_{{\mathbf F}\in \Phi_{good,w}(A)} F_0(A)
]\otimes 
[\bigcap_{{\mathbf G}\in \Phi_{firm,w}(B)}G_0(B)
]
\\
&=u_{good,w}(A)\otimes u_{firm,w}(B).
\end{aligned}
$$
\end{proof}

By Lemma \ref{xxlem2.11} and the proof of Theorem \ref{xxthm2.4} we obtain the following.

\begin{theorem}
\label{xxthm2.12}
Let $A$ be a Poisson domain and $R$ be a Poisson firm domain. Let $w$ be an integer. Suppose that $A$ is $u_{good,w}$-maximal and that $R$ is $u_{firm,w}$-minimal. 
\begin{enumerate}
\item[(1)]
Then, for 
every injective Poisson algebra morphism
$f\colon A\longrightarrow A\otimes R$, 
${\rm{im}}(f)\subseteq A\otimes \Bbbk\subseteq A\otimes R$.
\item[(2)]
Let $C$ be a Poisson subalgebra of $A$. Then there is a short exact sequence of groups
$$1\to \Aut_{Poi.alg}(A\otimes R\mid A)\to 
\Aut_{Poi.alg}(A\otimes R\mid C)\to 
\Aut_{Poi.alg}(A\mid C)\to 1.$$
\item[(3)]
If further $A$ is Dixmier, then $A$ is $R$-Dixmier.
\end{enumerate}
\end{theorem}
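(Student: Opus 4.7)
The plan is to mirror the proof of Theorem \ref{xxthm2.4} with $u_{good,w}$ and $u_{firm,w}$ in place of $u_{good}$ and $u_{firm}$. The substantive tensor-product behavior needed is already packaged in Lemma \ref{xxlem2.11}, so what remains is essentially formal bookkeeping: one only has to confirm that the various subalgebras appearing in the chain are Poisson subalgebras and that the appropriate filtrations are $w$-filtrations.

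For (1), I would let $\overline{A} := {\rm im}(f)$, which is a Poisson subalgebra of $A\otimes R$ isomorphic to $A$ via $f$. Then I would assemble the chain
\[
\overline{A} \;=\; u_{good,w}(\overline{A}) \;\subseteq\; u_{good,w}(A\otimes R) \;\subseteq\; u_{good,w}(A)\otimes u_{firm,w}(R) \;=\; A\otimes \Bbbk,
\]
where the first equality uses the $u_{good,w}$-maximality of $A$ transported across $A\cong \overline{A}$, the first inclusion is Lemma \ref{xxlem2.11}(1), the second inclusion is Lemma \ref{xxlem2.11}(2) applied to the firm Poisson domain $R$, and the final equality combines the $u_{good,w}$-maximality of $A$ with the $u_{firm,w}$-minimality of $R$. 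This proves (1).

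For (2), given $\phi\in \Aut_{Poi.alg}(A\otimes R\mid C)$, I would restrict $\phi$ to $A\otimes \Bbbk\cong A$ to obtain an injective Poisson morphism $A\to A\otimes R$; by (1) its image lies in $A\otimes \Bbbk$, and applying (1) to $\phi^{-1}$ as well shows that this restriction is in fact a Poisson automorphism of $A$ fixing $C$. The assignment $\phi\mapsto \phi|_{A\otimes \Bbbk}$ is a group homomorphism $\Aut_{Poi.alg}(A\otimes R\mid C)\to \Aut_{Poi.alg}(A\mid C)$ whose kernel is precisely $\Aut_{Poi.alg}(A\otimes R\mid A)$; surjectivity is obtained by extending any automorphism of $A$ fixing $C$ to $A\otimes R$ by the identity on $R$. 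Part (3) follows at once from (1): the induced map $A\stackrel{f}{\cong}\overline{A}\hookrightarrow A\otimes \Bbbk\cong A$ is injective, hence bijective by the Dixmier hypothesis, which forces ${\rm im}(f)=A\otimes \Bbbk$ exactly as required in Definition \ref{xxdef0.4}.

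The only step that requires genuine attention (as opposed to being a direct transcription of the proof of Theorem \ref{xxthm2.4}) is the second inclusion in the chain above, which is where one must use that the tensor product of a good negatively-$\mathbb{N}$-$w$-filtration of $A$ with a firm negatively-$\mathbb{N}$-$w$-filtration of $R$ is again a $w$-filtration of $A\otimes R$ satisfying (F4); this is precisely what is carried out inside the proof of Lemma \ref{xxlem2.11}(2). Once that lemma is invoked, there is no new analytic obstacle.
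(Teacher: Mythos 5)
Your proposal is correct and is essentially the paper's own argument: the paper proves Theorem \ref{xxthm2.12} simply by invoking Lemma \ref{xxlem2.11} together with the proof of Theorem \ref{xxthm2.4}, which is exactly the chain of inclusions and the restriction/extension bookkeeping you carry out. Your explicit check that the tensor filtration satisfies (F4) is precisely the content already established inside Lemma \ref{xxlem2.11}(2), so nothing further is needed.
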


We will frequently use Theorem \ref{xxthm2.12} in the next subsection.

\subsection{Poisson valuations and relative Dixmier property}
\label{xxsec2.3}
In this subsection, we will apply Theorem \ref{xxthm2.12} to some Poisson algebras. First of all, we will recall a lemma as proved in \cite{HTWZ1} and the definition of Poisson $w$-valuations, which is indeed equivalent to the definition of $w$-filtrations given in the previous subsection. We start with a definition.

\begin{definition}
\label{xxdef2.13}
\cite[Definition 1.1]{HTWZ1} Let $A$ be a Poisson domain and $w$ be an integer. A map 
$$\nu\colon A\longrightarrow {\mathbb Z}\cup\{\infty\}$$
is called a {\it $w$-valuation} on $A$ if, for all 
$a,b\in A$,
\begin{enumerate}
\item[(1)]
$\nu(a)=\infty$ if and only if $a=0$,
\item[(2)]
$\nu(a)=0$ for all $a\in \Bbbk^{\times}:=\Bbbk\setminus \{0\}$,
\item[(3)]
$\nu(ab)=\nu(a)+\nu(b)$,
\item[(4)]
$\nu(a+b)\geq \min\{\nu(a),\nu(b)\}$, %with equality when $\nu(a)\neq \nu(b)$,
\item[(5)]
$\nu(\{a,b\})\geq \nu(a)+\nu(b)-w$.
\end{enumerate}
\end{definition}

By \cite[Lemma 2.6]{HTWZ1}, there is a one-to-one correspondence between the set of good $w$-filtrations of $A$ and the set of $w$-valuations of $A$.  

For a non-negative integer $n$ we let
${\mathcal A}_{n}$ denote the class of 
the Poisson algebras $A_{\Omega}$ where 
$\Omega\in \Bbbk[x,y,z]$ is a 
homogeneous element of degree $3+n$
having an isolated singularity.
See Example \ref{xxexa0.1} for the definition
of $A_{\Omega}$ and $P_{\Omega-\xi}$.
Similarly, for $\xi\in \Bbbk$, let 
${\mathcal P}_{n,\xi}$ denote the class 
of the Poisson algebras $P_{\Omega-\xi}$ 
where $\Omega\in \Bbbk[x,y,z]$ is a 
homogeneous element of degree $3+n$
having an isolated singularity. The next statement is basically 
\cite[Lemma 4.8]{HTWZ1} combined with \cite[Lemma 7.6(2)]{HTWZ1}.

Considering $\Bbbk[x,y,z]$ as a $\Bbb{Z}$-graded algebra with the Adams grading, that is, the one in which the degrees of $x$, $y$, and $z$ are all equal to $1$, we have the following.  

For $\xi = 0$, the ideal $(\Omega)$ is a graded ideal, and consequently, $P_{\Omega}$ inherits a $\Bbb{Z}$-graded structure:
\[
P_{\Omega} = \bigoplus_{i \in \Bbb{Z}} P_i.
\]
Note that  $P_i = 0$, for each  $i <0$ and the {\it Adams$^{-Id}$ filtration} of $P_{\Omega}$, denoted by ${\mathbf F}^{-Id}$, is defined by
$$
F^{-Id}_i(P) := \bigoplus_{n \leq -i} P_n
\quad \text{for all $i \in \Bbb{Z}$}.
$$
When $\xi \neq 0$, the Adams grading on $\Bbbk[x,y,z]$ induces a natural filtration on $P_{\Omega - \xi}$, which we denote by $\mathbf{F}^c$. That is, for every $i \in \Bbb{Z}$, we have
\[
F_i^c := \{ f + (\Omega - \xi) \mid f \in \Bbbk[x,y,z] \text{ and } \deg(f) \leq -i \}.
\]

 One can check that  
$\gr_{{\mathbf F}^c} P_{\Omega - \xi} \cong P_{\Omega}$.

\begin{lemma}
\label{xxlem2.14}
Let $B$ be in either ${\mathcal A}_{n}$ or
${\mathcal P}_{n,\xi}$ for some $\xi\in \Bbbk$.
\begin{enumerate}
\item[(1)]
Suppose $w<n$. Then every negatively-${\mathbb N}$-$w$-filtration of $B$ is trivial, or 
equivalently, $u_{good,w}(B)=B$.
\item[(2)]
Suppose that $n>0$ and $B\in {\mathcal P}_{n,\xi}$. Then $u_{good, 0}(B)=B$.
\item[(3)]
Let $B$ be in ${\mathcal P}_{n,\xi}$ where
$\xi\in \Bbbk$. Suppose $w=n>0$. Then there is a 
unique nontrivial 
negatively-${\mathbb N}$-$w$-filtration 
$\mathbf{F}$
of $B$
that is ${\mathbf{F}}={\mathbf{F}}^{-Id}$ if $B\in {\mathcal P}_{n,0}$ or 
${\mathbf{F}}={\mathbf{F}}^{c}$ if $B\in 
{\mathcal P}_{n,\xi}$ if $\xi\neq 0$ as described in \cite[Lemma 4.8(4)]{HTWZ1}. 
As a consequence, $u_{good, n}(B)=\Bbbk$. 
\item[(4)]
Let $n\geq 1$ and 
let $B$ and $B'$ in $\bigcup_{\xi\in \Bbbk}{\mathcal P}_{n,\xi}$. Suppose $\phi\colon
B\longrightarrow B'$ be an injective Poisson algebra 
morphism. Then $\phi$ is a bijection. 
\end{enumerate}
\end{lemma}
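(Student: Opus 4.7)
\emph{Strategy.} The plan is to lean on \cite[Lemma 4.8]{HTWZ1} and \cite[Lemma 7.6(2)]{HTWZ1}, since the excerpt itself signals that the present lemma packages those results into $u$-invariant language. The bridge I would use is the bijection from \cite[Lemma 2.6]{HTWZ1} between good negatively-$\mathbb{N}$-$w$-filtrations of $B$ and $w$-valuations $\nu\colon B\to \Z\cup\{\infty\}$ taking non-positive values on $B\setminus\{0\}$, via $F_i(B)=\{a\in B\mid \nu(a)\leq -i\}$.

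\emph{Parts (1)--(3).} With this dictionary, part (1) is an immediate reformulation of \cite[Lemma 4.8]{HTWZ1}: if $w<n$, then every $w$-valuation on $B$ is the zero valuation, hence the corresponding filtration is trivial with $F_0(B)=B$, and intersecting over all such trivial filtrations yields $u_{good,w}(B)=B$. Part (2) is then the special case $w=0$, which lies in the allowed range whenever $n>0$. For part (3), \cite[Lemma 4.8(4)]{HTWZ1} gives the classification of nontrivial $n$-valuations on $B$: up to positive scaling there is exactly one, and under the valuation/filtration correspondence it is precisely $\mathbf{F}^{-Id}$ (when $\xi=0$) or $\mathbf{F}^c$ (when $\xi\neq 0$), both of which are defined in the paragraph preceding the lemma statement. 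A direct inspection of those definitions shows $F_0=\Bbbk$ in each case; intersecting $F_0=B$ from the trivial filtration with $F_0=\Bbbk$ from the unique nontrivial one gives $u_{good,n}(B)=\Bbbk$.

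\emph{Part (4).} For an injective Poisson morphism $\phi\colon B\to B'$ with $B\in \mathcal{P}_{n,\xi}$ and $B'\in \mathcal{P}_{n,\xi'}$, I would appeal directly to \cite[Lemma 7.6(2)]{HTWZ1}. The idea is to pull back the unique nontrivial $n$-valuation $\nu'$ on $B'$ along $\phi$, setting $\nu:=\nu'\circ\phi$. Injectivity of $\phi$ together with the explicit form of $\nu'$ ensures that $\nu$ is a nontrivial $n$-valuation on $B$, so uniqueness from (3) identifies $\nu$ with the standard nontrivial one. Passing to associated graded rings, $\gr\phi$ becomes an injective graded Poisson morphism between two copies of the potential algebra $P_\Omega$ sharing the same Hilbert series; a standard Hilbert-series count forces $\gr\phi$ to be bijective, and bijectivity of $\phi$ follows by the usual filtered-to-graded lifting.

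\emph{Main obstacle.} The delicate step is in Part (4): verifying that the pullback valuation $\nu$ is genuinely nontrivial, and that the two associated graded algebras really coincide (up to isomorphism) with the same Hilbert series so that the Hilbert-series comparison gives bijectivity of $\gr\phi$. Since these bookkeeping points are handled inside \cite[Lemma 7.6(2)]{HTWZ1}, my proof would cite that lemma rather than reproduce its argument, and the remainder of the lemma is a formal assembly from the cited results.
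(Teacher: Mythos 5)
Your overall route coincides with the paper's: parts (1)--(3) are deduced from \cite[Lemma 4.8]{HTWZ1} via the filtration/valuation dictionary of \cite[Lemma 2.6]{HTWZ1}, and part (4) is simply quoted from \cite[Lemma 7.6(2)]{HTWZ1} (the paper specializes that lemma at $\epsilon=1$). Your treatments of (2), (3) and (4) match what the paper does, and the observation that $F_0=\Bbbk$ for both $\mathbf{F}^{-Id}$ and $\mathbf{F}^{c}$ is exactly what gives $u_{good,n}(B)=\Bbbk$.

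There is, however, a genuine inaccuracy in your deduction of part (1): it is \emph{not} true that for $w<n$ every $w$-valuation on $B$ is the zero valuation. For example, when $B=P_{\Omega}\in\mathcal{P}_{n,0}$ (or $B=A_{\Omega}\in\mathcal{A}_{n}$), the map assigning to a nonzero element the lowest Adams degree among its homogeneous components is a nontrivial $w$-valuation for every $0\le w<n$, because the bracket raises Adams degree by $n$; it just takes nonnegative values and hence does not come from a negatively-${\mathbb N}$-$w$-filtration. What \cite[Lemma 4.8(1)]{HTWZ1} actually gives is the inclusion $B\subseteq {}^{w}\Gamma(K)$ with $K=Q(B)$, i.e.\ every $w$-valuation of $K$ is nonnegative on $B$. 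The paper's argument then needs two further ingredients that your proposal skips: a negatively-${\mathbb N}$-$w$-filtration of $B$ produces a $w$-valuation that is nonpositive on $B$ (since $F_i(B)=0$ for $i\ge 1$), and by \cite[Lemma 2.7]{HTWZ1} this valuation extends to $K$; the two sign constraints together force the valuation to vanish on $B$, so $F_0(B)=B$ and the filtration is trivial. So your conclusion is correct, but only after this bridging step, which is precisely the content the paper adds beyond citing Lemma 4.8. A minor further quibble: ``unique up to positive scaling'' is not the right formulation in part (3), since rescaling an $n$-valuation by an integer $c>1$ yields a $cn$-valuation rather than an $n$-valuation; the uniqueness in \cite[Lemma 4.8(4)]{HTWZ1} is on the nose.
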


\begin{proof}
Assertion (4) is \cite[Lemma 7.6 (2)]{HTWZ1}, where we put $\epsilon = 1$.\\
Assertions (1)–(3) basically follow
from \cite[Lemma 4.8]{HTWZ1}, but let us explain part (1).

Let $K$ be the fraction field of $B$. 
By \cite[Definition 4.1(1)]{HTWZ1},
$$^{w}\Gamma(K):=
\bigcap_{\nu} F^{\nu}_0(K)$$
where $\nu$ runs over all $w$-valuations
on $K$ and $F^{\nu}_0(K) = \{ a \in K \vert v(a) \geq 0\} $. By \cite[Lemma 4.8(1)]{HTWZ1},
$^{w}\Gamma(K)\supseteq B$. 
For every negatively-${\mathbb N}$-$w$-filtration of $B$, say ${\mathbf F}$, 
it defines a $w$-valuation of $B$
by \cite[Lemma 2.6]{HTWZ1}. By \cite[Lemma 2.7]{HTWZ1}, it extends to a $w$-valuation of $K$. Then  equation
$^{w}\Gamma(K)\supseteq B$ implies that 
$F_0(B)=B$. So ${\mathbf F}$ is trivial.
Consequently, $u_{good, w}(B)=B$.
\end{proof}

Next, we establish the relative Dixmier property for certain classes of Poisson algebras. 
Let $n \geq 1$ and let ${\mathcal P}_n$ denote the class of Poisson algebras $\bigcup_{\xi \in \Bbbk} {\mathcal P}_{n,\xi}$. 

Suppose that ${\mathcal G}_{n-1}$ is the class of Adams-connected graded Poisson domains 
$R = \bigoplus_{i \geq 0} R_i$ generated by $R_1$ such that $\{R_1, R_1\} \subseteq R_{n+1}$.  (Note that here $R_0 = \kk$, each $R_i$ is a subspace of $R$, and  $R_i R_j \subseteq R_{i+j}$.)
We now define a negatively-$\mathbb{N}$-filtration on $R$ by setting 
$F_{-i} (R) := \sum_{j \leq i} R_j$. 

Let ${\mathcal F}_{n-1}$ be the class of Poisson firm domains $S$ such that $u_{\mathrm{firm},\,n-1}(S) = \Bbbk$. Note that when  $\Bbbk$ is algebraically closed, every commutative domain is a firm domain.
Therefore, by the  definition, we have ${\mathcal G}_{n-1} \subseteq {\mathcal F}_{n-1}$.

\begin{theorem}
\label{xxthm2.15}
Retain the above notation and assume that $\Bbbk$ is algebraically closed.
Then ${\mathcal P}_{n}$ is ${\mathcal F}_{n-1}$-Dixmier in the sense of Definition {\rm{\ref{xxdef0.4}(1)}}.
\end{theorem}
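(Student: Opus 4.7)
The plan is to adapt the argument of Theorem \ref{xxthm2.12} to the heterogeneous Dixmier question where the source $A_1$ and the first tensor factor $A_2$ may differ as members of $\mathcal{P}_n$. The three ingredients I intend to combine are Lemma \ref{xxlem2.14}(1), which makes every $B\in\mathcal{P}_n$ into a $u_{good,n-1}$-maximal object; Lemma \ref{xxlem2.11}(2), which bounds $u_{good,n-1}$ of a tensor product above by a tensor of $u_{good,n-1}$ and $u_{firm,n-1}$; and Lemma \ref{xxlem2.14}(4), which provides a Dixmier-type surjectivity between any two members of $\mathcal{P}_n$.

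Fix $A_1,A_2\in\mathcal{P}_n$, $R\in\mathcal{F}_{n-1}$, and an injective Poisson algebra morphism $f\colon A_1\to A_2\otimes R$, and set $\overline{A}_1:=f(A_1)$, a Poisson subalgebra of $A_2\otimes R$ isomorphic to $A_1$. The first step is to establish the containment $\overline{A}_1\subseteq A_2\otimes\Bbbk$. Since $A_1,A_2\in\mathcal{P}_n$, Lemma \ref{xxlem2.14}(1) applied with $w=n-1<n$ gives $u_{good,n-1}(A_1)=A_1$ and $u_{good,n-1}(A_2)=A_2$. Because $R$ is a firm Poisson domain (it lies in $\mathcal{F}_{n-1}$), combining Lemma \ref{xxlem2.11}(1) and (2) with the isomorphism $A_1\cong\overline{A}_1$ yields
\[
\overline{A}_1=u_{good,n-1}(\overline{A}_1)\subseteq u_{good,n-1}(A_2\otimes R)\subseteq u_{good,n-1}(A_2)\otimes u_{firm,n-1}(R)=A_2\otimes\Bbbk,
\]
where the final equality uses the defining condition $u_{firm,n-1}(R)=\Bbbk$ of $\mathcal{F}_{n-1}$.

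The second step is to upgrade this containment to the equality demanded by Definition \ref{xxdef0.4}(1). The inclusion $\overline{A}_1\subseteq A_2\otimes\Bbbk$ together with the canonical identification $A_2\otimes\Bbbk\cong A_2$ lets me view $f$ as an injective Poisson algebra morphism $\widetilde{f}\colon A_1\to A_2$. Since $n\geq 1$ and $A_1,A_2\in\mathcal{P}_n$, Lemma \ref{xxlem2.14}(4) forces $\widetilde{f}$ to be bijective, so $\overline{A}_1=A_2\otimes\Bbbk$ inside $A_2\otimes R$, which is exactly the condition for $\mathcal{P}_n$ to be $\mathcal{F}_{n-1}$-Dixmier.

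I do not anticipate a real obstacle, as the proof is a packaging of three already-established results. The one conceptual point worth emphasizing is the bifurcation into a containment step and an equality step: the $u$-invariant bounds alone only force the image of $f$ into $A_2\otimes\Bbbk$, and it is Lemma \ref{xxlem2.14}(4) --- the genuine Dixmier input for $\mathcal{P}_n$ --- that promotes this inclusion to the equality with $A_2\otimes\Bbbk$ required by the definition. The hypothesis that $\Bbbk$ be algebraically closed is not used in the argument itself, but it plays its usual role of ensuring that every commutative Poisson domain is firm, and hence $\mathcal{G}_{n-1}\subseteq\mathcal{F}_{n-1}$, so that the theorem has the broadest range of applications.
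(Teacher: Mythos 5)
Your proposal is correct and follows essentially the same route as the paper: $u_{good,n-1}$-maximality of members of ${\mathcal P}_n$ (Lemma \ref{xxlem2.14}(1)) plus the tensor-product bound from Lemma \ref{xxlem2.11} forces the image into $A_2\otimes\Bbbk$, and Lemma \ref{xxlem2.14}(4) then upgrades the inclusion to equality. The only (harmless) difference is that you invoke Lemma \ref{xxlem2.11}(2) together with firmness of $R$ and $u_{firm,n-1}(R)=\Bbbk$, whereas the paper uses the identification $u_{good,w}=u_{firm,w}$ over an algebraically closed field and Lemma \ref{xxlem2.11}(3); both yield the same containment.
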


\begin{proof}
Since $\Bbbk$ is algebraically closed, we have  $u_{good,w}=u_{firm,w}$. Let $A$ and $A'$ be in ${\mathcal P}_{n}$ and let $f\colon A\longrightarrow A'\otimes R$ be an injective Poisson algebra morphism where $R\in {\mathcal F}_{n-1}$. We have:
$$\begin{aligned}
A&=u_{good, n-1}(A) \qquad\; \qquad 
\; \qquad\; \qquad 
{\text{by Lemma \ref{xxlem2.14}(1)}}\\
&\xrightarrow{f} u_{good,n-1}(A'\otimes R) \qquad\;\qquad\;\qquad {\text{by Lemma \ref{xxlem2.11}(1)}}\\
&= u_{good,n-1}(A')\otimes u_{good, n-1}(R)
\qquad {\text{by Lemma \ref{xxlem2.11}(3)}}\\
&=A'\otimes \Bbbk.
\end{aligned}$$
Thus, $f$ can be regarded as an injective map from $A$ to $A' \otimes \Bbbk \cong A'$.
By Lemma \ref{xxlem2.14}(4), the image 
of $f$ is $A'\otimes \Bbbk$. So the assertion follows.
\end{proof}

\subsection{Other applications:
relative cancellation and non-existence of Hopf coactions}
\label{xxsec2.4}
We briefly mention two other applications. The first one concerns relative cancellation
introduced in \cite{HNWZ1}. For more information on the cancellation problem, we refer readers to \cite{GW, GWY} and the survey paper \cite{HTW} for additional related references. 

\begin{definition}
\label{xxdef2.16}
Let ${\mathcal R}$ be a family of Poisson algebras. 
\begin{enumerate}
\item[(1)]
A Poisson algebra $A$ is called 
{\it ${\mathcal R}$-cancellative} if, 
for any Poisson algebra $B$ and any two 
Poisson algebras $R, R'\in {\mathcal R}$ 
of the same Gelfand-Kirillov dimension, a 
Poisson algebra isomorphism $A\otimes R\cong 
B\otimes R'$ implies that $A\cong B$.  
\item[(2)]
A Poisson algebra $A$ is called 
{\it ${\mathcal R}$-bicancellative} if, for 
any Poisson algebra $B$ and any two Poisson 
algebras $R, R' \in {\mathcal R}$ of the same 
Gelfand-Kirillov dimension, a Poisson algebra 
isomorphism $A\otimes R\cong B\otimes R'$ 
implies that $A\cong B$ and $R\cong R'$. 
\item[(3)]
Let ${\mathcal C}$ be another family of 
Poisson algebras. We say the pair $({\mathcal R}, 
{\mathcal C})$ is {\sf bicancellative} if 
for any two Poisson algebras $C,C'\in {\mathcal C}$ 
and any two Poisson algebras $R, R' \in {\mathcal R}$, 
a Poisson algebra isomorphism $C\otimes R\cong 
C'\otimes R'$ implies that $C\cong C'$ and 
$R\cong R'$.  
\end{enumerate}
\end{definition}

\begin{theorem}
\label{xxthm2.17}
Retain the notation as before. 
\begin{enumerate}
\item[(1)]
Every Poisson algebra $A$ in ${\mathcal P}_n$ is 
${\mathcal F}_{n-1}$-cancellative.
\item[(2)]
Every Poisson algebra $A$ in ${\mathcal A}_n$ is 
${\mathcal F}_{n-1}$-cancellative.
\item[(3)]
$({\mathcal F}_{n-1}, {\mathcal P}_{n,0})$ 
is bicancellative.
\item[(4)]
$({\mathcal F}_{n-1}, {\mathcal A}_{n})$ 
is bicancellative.
\end{enumerate}
\end{theorem}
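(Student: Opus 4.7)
The plan is to leverage the $u_{good,n-1}$-invariant developed in Subsections~2.1 and 2.3, together with its multiplicativity over tensor products of Poisson domains when the base field is algebraically closed. The three ingredients are: (a) every $A$ in $\mathcal{P}_n$ or $\mathcal{A}_n$ is $u_{good,n-1}$-maximal, by Lemma~\ref{xxlem2.14}(1)--(2); (b) every $R$ in $\mathcal{F}_{n-1}$ satisfies $u_{good,n-1}(R)=u_{firm,n-1}(R)=\Bbbk$ (using that commutative domains over an algebraically closed field are firm, so the two $u$-invariants coincide); and (c) Lemma~\ref{xxlem2.11}(3) gives the multiplicativity $u_{good,n-1}(X\otimes Y)=u_{good,n-1}(X)\otimes u_{good,n-1}(Y)$.

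For parts (1) and (2), let $\phi\colon A\otimes R\xrightarrow{\sim} B\otimes R'$ be a Poisson isomorphism with $A$ in $\mathcal{P}_n$ (resp.\ $\mathcal{A}_n$), $B$ arbitrary, and $R,R'\in\mathcal{F}_{n-1}$ of the same Gelfand--Kirillov dimension. Applying $u_{good,n-1}$ to both sides and using (a)--(c), one computes
\[
A\;\cong\;u_{good,n-1}(A\otimes R)\;\cong\;u_{good,n-1}(B\otimes R')\;\cong\;u_{good,n-1}(B).
\]
Tracing the images shows that $\phi$ restricts to a Poisson isomorphism $A\otimes\Bbbk\xrightarrow{\sim}u_{good,n-1}(B)\otimes\Bbbk$, so that $A$ sits inside $B$ as the Poisson subalgebra $u_{good,n-1}(B)$. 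The remaining step is to argue $u_{good,n-1}(B)=B$, which one expects to follow by combining the Gelfand--Kirillov dimension equality $\GKdim B=\GKdim A$ (forced by $\GKdim R=\GKdim R'$ in Definition~\ref{xxdef2.16}) with the structural rigidity of $\mathcal{P}_n$- and $\mathcal{A}_n$-algebras: since $u_{good,n-1}(B)\cong A$ is an integrally closed Poisson domain of the same Gelfand--Kirillov dimension as $B$, the containment $u_{good,n-1}(B)\subseteq B$ inside a common fraction field forces equality. For $A\in\mathcal{P}_n$ one then invokes Lemma~\ref{xxlem2.14}(4) to conclude; for $A\in\mathcal{A}_n$ one uses the Adams-grading property of $A_\Omega$ to reach the analogous conclusion.

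For parts (3) and (4), given $C\otimes R\cong C'\otimes R'$ with $C,C'$ in $\mathcal{P}_{n,0}$ (resp.\ $\mathcal{A}_n$) and $R,R'\in\mathcal{F}_{n-1}$, the above computation is now symmetric, since both $C$ and $C'$ are $u_{good,n-1}$-maximal; applying $u_{good,n-1}$ yields $C\cong C'$ directly. To extract $R\cong R'$, fix an identification $C=C'$ and study the reduced isomorphism $C\otimes R\cong C\otimes R'$. Using the Adams grading on $C$ (which is the defining feature of $\mathcal{P}_{n,0}$ and $\mathcal{A}_n$), one recovers $R$ inside $C\otimes R$ via a Poisson centralizer or associated-graded construction relative to the image of $C\otimes\Bbbk$, obtaining $R\cong R'$.

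The main obstacle, across all four parts, is the step of promoting the invariant identity $A\cong u_{good,n-1}(B)$ to the full isomorphism $A\cong B$ (and analogously, the extraction of $R\cong R'$ in the bicancellative cases). The $u$-invariant argument alone delivers only an embedding of $A$ into $B$; the closing step relies critically on the normality and Dixmier-type rigidity established for $\mathcal{P}_n$- and $\mathcal{A}_n$-algebras elsewhere in the paper, together with the Adams grading in parts (3) and (4). This rigidity-versus-invariant interplay is where the substance of the proof lies.
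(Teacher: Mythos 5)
Your opening computation is exactly the paper's: applying $u_{good,n-1}$ to the isomorphism $A\otimes R\cong B\otimes R'$, using that $A$ is $u_{good,n-1}$-maximal (Lemma \ref{xxlem2.14}), that $u_{firm,n-1}(R)=u_{firm,n-1}(R')=\Bbbk$, and Lemma \ref{xxlem2.11}, to land on $A\cong u_{good,n-1}(B)\subseteq B$. Up to that point you agree with the paper. The difference is in how the embedding is promoted to an isomorphism: the paper does not argue this internally at all, but closes every part by invoking the proof of \cite[Theorem 2.4]{HNWZ1}, whereas you try to supply the closing step yourself, and that is where your proposal breaks down.

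Concretely: your key claim that ``$u_{good,n-1}(B)\cong A$ is an integrally closed Poisson domain of the same Gelfand--Kirillov dimension as $B$, [so] the containment $u_{good,n-1}(B)\subseteq B$ inside a common fraction field forces equality'' is false as a general principle --- $\Bbbk[x]\subsetneq\Bbbk[x,x^{-1}]$ is a containment of integrally closed domains with the same fraction field and the same $\GKdim$. You also never verify that $u_{good,n-1}(B)$ and $B$ share a fraction field, nor that the algebras in ${\mathcal P}_n$ or ${\mathcal A}_n$ are integrally closed (the paper nowhere uses this). The fallback appeal to Lemma \ref{xxlem2.14}(4) is circular in this situation: that lemma requires both the source and the target to lie in $\bigcup_\xi{\mathcal P}_{n,\xi}$, but $B$ is an arbitrary Poisson algebra in Definition \ref{xxdef2.16}, and you would only know $B\in{\mathcal P}_n$ after proving $B\cong A$. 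Finally, for parts (3) and (4) the recovery of $R\cong R'$ is only gestured at (``Poisson centralizer or associated-graded construction''); note in particular that for ${\mathcal A}_n$ the potential $\Omega$ is Poisson central in $A_\Omega$, so the Poisson centralizer of $C\otimes\Bbbk$ in $C\otimes R$ strictly contains $\Bbbk\otimes R$ and does not isolate the factor $R$. So the genuinely hard steps --- passing from $A\cong u_{good,n-1}(B)$ to $A\cong B$, and extracting $R\cong R'$ --- are either incorrectly argued or missing; in the paper they are exactly the content delegated to \cite[Theorem 2.4(1),(2)]{HNWZ1}, and your proposal would need either that citation or a correct substitute for it.
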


\begin{proof} The proofs of all four parts are 
similar to the proof of \cite[Theorem 2.4(1)]{HNWZ1}, so we only prove part (1). Parts (3) and (4) also follow from \cite[Theorem 2.4(2)]{HNWZ1}.

(1) Let $A$ be in ${\mathcal P}_n$ and $R_1, R_2$
be in ${\mathcal F}_{n-1}$ of the same Gelfand-Kirillov dimension. Suppose $A\otimes R_1
\cong B\otimes R_2$ for another Poisson algebra $B$. 
Since $A\otimes R_1$ is a domain, so is $B$. Applying 
$u_{good, n-1}$ to the isomorphism $A\otimes R_1
\cong B\otimes R_2$ and using Lemma \ref{xxlem2.11} and the fact 
$u_{firm, n-1}(R_1)=u_{firm,n-1}(R_2)=\Bbbk$, we obtain
that 
$$A\cong A\otimes \Bbbk 
\subseteq u_{good,n-1}(A\otimes R_1)
\cong u_{good, n-1}(B\otimes R_2)
\subseteq B\otimes \Bbbk\cong B.$$
Now the proof of \cite[Theorem 2.4(1)]{HNWZ1}
applies. 
\end{proof}

The second application concerns Hopf algebra coactions. We refer the readers to \cite{Ago} for the 
definition of Poisson Hopf algebras, and to \cite{Mon} for the definition of Hopf algebra coaction and 
their related properties.

\begin{theorem}
\label{xxthm2.18}
Let $A$ be in ${\mathcal P}_{n}$. Suppose $H$ is a Poisson Hopf algebra 
that is in ${\mathcal F}_{n-1}$ as a Poisson algebra. Then every Poisson 
$H$-coaction on $A$ is trivial. 
\end{theorem}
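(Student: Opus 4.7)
The plan is to observe that a Poisson $H$-coaction $\rho\colon A\longrightarrow A\otimes H$ is an injective Poisson algebra morphism of exactly the type handled by Theorem~\ref{xxthm2.15}, and then use the counit axiom to force it to be trivial. Recall that an $H$-coaction is a Poisson algebra morphism $\rho\colon A\longrightarrow A\otimes H$ satisfying coassociativity and counitality $(\mathrm{id}_A\otimes\epsilon)\circ\rho=\mathrm{id}_A$. Injectivity of $\rho$ is automatic, since the counit identity exhibits $\mathrm{id}_A\otimes\epsilon$ as a left inverse of $\rho$. Thus $\rho$ is an injective Poisson algebra morphism from $A\in\mathcal{P}_n$ into $A\otimes H$ with $H\in\mathcal{F}_{n-1}$.

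Next, I would apply Theorem~\ref{xxthm2.15} with $A'=A$ and $R=H$ to conclude that ${\rm im}(\rho)\subseteq A\otimes\Bbbk$; in fact the final step of that proof invokes Lemma~\ref{xxlem2.14}(4), which actually upgrades the inclusion to ${\rm im}(\rho)=A\otimes\Bbbk$. Consequently there is a (Poisson algebra) map $g\colon A\longrightarrow A$ with $\rho(a)=g(a)\otimes 1$ for every $a\in A$. Applying counitality then yields
\[
a=(\mathrm{id}_A\otimes\epsilon)\rho(a)=(\mathrm{id}_A\otimes\epsilon)\bigl(g(a)\otimes 1\bigr)=g(a)\epsilon(1)=g(a),
\]
so $g=\mathrm{id}_A$ and $\rho(a)=a\otimes 1$ for all $a$, i.e., the coaction is trivial.

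The main (and essentially the only) obstacle is conceptual rather than technical: one must check that the Poisson Hopf coaction in the sense of \cite{Ago, Mon} really is a Poisson algebra morphism into the tensor-product Poisson algebra $A\otimes H$, so that Theorem~\ref{xxthm2.15} applies as stated. Once this identification is in place, the argument reduces to a one-line application of the relative Dixmier property together with counitality, with no further case analysis required.
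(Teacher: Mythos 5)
Your proposal is correct and follows essentially the same route as the paper: the counit axiom makes $\rho$ injective, Theorem~\ref{xxthm2.15} (via its proof) forces ${\rm im}(\rho)\subseteq A\otimes \Bbbk$, and hence the coaction is trivial. Your extra final computation using counitality to pin down $\rho(a)=a\otimes 1$ is a harmless elaboration of the paper's concluding sentence.
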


\begin{proof}
Suppose $\rho\colon A\to A\otimes H$ is a Poisson Hopf coaction of $H$ on $A$.
By the counit axiom, $\rho$ is an injective map. By the proof of Theorem \ref{xxthm2.15},
the image of $\rho$ is in $A\otimes \Bbbk$ inside $A\otimes H$. This means that the coaction $\rho$ is trivial. 
\end{proof}

\section{Tensor products of Poisson algebras}
\label{xxsec3}
In this section, we assume that $\Bbbk$ is algebraically closed for simplicity. Under this
assumption, the tensor product of two commutative domains is again a domain \cite[Lemma 1.1(1)]{HNWZ1}. Our main goal in
this section is to prove the Dixmier 
property for the tensor product of $P_{\Omega_i-\xi_{i}}$ where $\Omega_i$ are i.s. potentials of distinct degrees
$\geq 5$. We start with some lemmas.

\begin{lemma}
\label{xxlem3.1}
Let $A$ and $B$ be two Poisson domains. Suppose the Poisson center of $Q(A)$ is
$\Bbbk$. Then every nonzero  Poisson ideal of $A\otimes B$ contains an element of the form 
$a\otimes b$ for some $0\neq a\in A$ and $0\neq b\in B$.
\end{lemma}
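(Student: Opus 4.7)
The plan is a standard ``length reduction'' argument in $A \otimes B$, driven by the Poisson bracket. Let $I$ be a nonzero Poisson ideal of $A \otimes B$. I would choose $z \in I \setminus \{0\}$ together with an expression $z = \sum_{i=1}^{n} a_i \otimes b_i$ in which $b_1,\dots,b_n \in B$ are linearly independent over $\Bbbk$ (so each $a_i \neq 0$), and minimise $n$ over all such pairs. If $n = 1$ we are done, so the task is to derive a contradiction from $n \geq 2$.

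Assuming $n \geq 2$, the key step is to form, for each $a \in A$, the element
\[
w_a \;:=\; (a_1 \otimes 1)\,\{a \otimes 1,\, z\} \;-\; (\{a,a_1\}\otimes 1)\, z,
\]
which lies in $I$ since $I$ is a Poisson ideal. Using $\{a \otimes 1,\, a_i \otimes b_i\} = \{a,a_i\}\otimes b_i$, the $i=1$ contribution to $w_a$ cancels, so $w_a$ is supported on the linearly independent $b_2,\dots,b_n$ with at most $n-1$ tensor terms. Minimality of $n$ forces $w_a = 0$, and comparing coefficients yields
\[
a_1\,\{a, a_i\} \;=\; \{a, a_1\}\, a_i \qquad \text{for every } a \in A \text{ and every } i \geq 2.
\]

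Next I would pass to the fraction field $Q(A)$, where the Poisson bracket extends by the Leibniz/quotient rule. The identity above rewrites as $\{a,\, a_i/a_1\} = 0$ for every $a \in A$, and another application of the quotient rule upgrades this to $\{q,\, a_i/a_1\} = 0$ for every $q \in Q(A)$. Thus $a_i/a_1$ lies in the Poisson center of $Q(A)$, which by hypothesis equals $\Bbbk$. Writing $a_i = c_i a_1$ with $c_i \in \Bbbk$ collapses the representation to $z = a_1 \otimes (b_1 + \sum_{i \geq 2} c_i b_i)$, a single pure tensor whose second factor is nonzero by the linear independence of the $b_i$'s. This contradicts the minimality assumption $n \geq 2$, so $n = 1$ and $I$ contains an element of the required form.

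The main point where care is needed is the bookkeeping in $w_a = 0$ (to confirm that $w_a$ really has at most $n-1$ tensor terms with the \emph{same} linearly independent $b_i$'s, so that minimality applies) and the passage from Poisson-commuting with all of $A$ to Poisson-commuting with all of $Q(A)$. Both are routine, but the latter step is exactly where the hypothesis on the Poisson center of $Q(A)$ enters in an essential way.
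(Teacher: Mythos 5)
Your proof is correct, and it rests on the same two pillars as the paper's: take a nonzero element of the ideal with a representation of minimal tensor length, use Poisson brackets against elements $a\otimes 1$ to force a length drop, and then deduce from the resulting coefficient identity that the ratios $a_i/a_1$ lie in the Poisson center of $Q(A)=\Bbbk$, contradicting linear independence. The execution of the reduction step differs, though, and yours is arguably cleaner: the paper argues by cases, first handling $\{a_1,a_2\}\neq 0$ (where $\{c,a_1\otimes 1\}$ already has fewer terms) and then, when $\{a_1,a_2\}=0$, multiplying $c$ by $f\otimes 1$ and invoking the first case to get $\{a_1f,a_2f\}=0$, which after expanding by Leibniz yields $\{a_1,f\}a_2=\{a_2,f\}a_1$ and hence the centrality of $a_1a_2^{-1}$. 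Your single combination $w_a=(a_1\otimes 1)\{a\otimes 1,z\}-(\{a,a_1\}\otimes 1)z$ kills the $i=1$ term identically, so minimality gives $a_1\{a,a_i\}=\{a,a_1\}a_i$ for \emph{all} $i\ge 2$ and all $a\in A$ in one stroke, with no case split and with the stronger conclusion that $z$ itself collapses to a pure tensor (rather than only contradicting the independence of $a_1,a_2$). The bookkeeping points you flag are indeed fine: $w_a\in I$ because $I$ is a Poisson ideal, vanishing of $w_a$ gives the coefficient identities since $b_2,\dots,b_n$ are $\Bbbk$-linearly independent, and the passage from $\{a,a_ia_1^{-1}\}=0$ for $a\in A$ to centrality in $Q(A)$ is the standard quotient-rule computation, which is exactly where the hypothesis on the Poisson center enters in both proofs.
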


\begin{proof}
Let $I$ be a nonzero Poisson ideal of $A\otimes B$. 
Let $0\neq c\in I$ and write
$c$ as $\sum_{i=1}^{n} a_i \otimes b_i$ where
$a_i\in A$ and $b_i\in B$. Suppose
$n$ is minimal among all possible 
nonzero $c$ in $I$. We claim that $n=1$.
Suppose to the contrary that $n>1$. 
Then we may assume that $\{a_1,\cdots, a_n\}$
(resp. $\{b_1,\cdots, b_n\}$) 
are linearly independent. 

Case 1: $\{a_1,a_2\}\neq 0$.
Then $I$ contains the element
$\{c, a_1\otimes 1\}=\sum_{i=1}^{n}
\{a_i,a_1\}\otimes b_i=\sum_{i=2}^{n}
\{a_i,a_1\}\otimes b_i$, which is nonzero
since $\{b_1,\cdots, b_n\}$ are 
linearly independent, and $\{a_2,a_1\}\neq 0$. This contradicts the minimality of
$n$.

It remains to consider the following case:

Case 2: $\{a_1,a_2\}=0$. Note that, for every $f \in A$, 
we have  that $ 
(f \otimes 1 ) c = \sum_{i=1}^{n} a_i f \otimes b_i \in I$, which is nonzero,  for any nonzero $f$. By Case 1,
we may further assume that $\{a_1 f, a_2 f\}=0$. So 
$$0=\{a_1f, a_2 f\}
=\{a_1, f\} a_2 f+\{f, a_2\} a_1 f+\{a_1,a_2\}f^2=\{a_1, f\} a_2 f+\{f, a_2\} a_1 f.
$$
Hence $\{a_1,f\}a_2=\{a_2,f\}a_1$. 
This implies that
$$\{a_1 a_2^{-1},f\}=\{a_1,f\} a_2^{-1}-\{a_2, f\} a_2^{-2} a_1=(\{a_1, f\} a_2-\{a_2, f\} a_1) a_2^{-2}=0.$$
Since $f$ is arbitrary, $a_1 a_2^{-1}$ is in the center of $Q(A)$ which is $\Bbbk$ by hypothesis. So $a_{1}$ and $a_{2}$ are linearly dependent, which is a contradiction.
\end{proof}

Let $w$ be a fixed nonnegative integer. Recall that a Poisson $w$-graded algebra 
$P=\bigoplus_{i\ge 0} P_i$ (with connected Adams grading) is called 
{\it projectively simple} if  
\begin{itemize}
\item[(a)] $P_{\ge h}:=\sum_{i\ge h} P_i \neq 0$ for all $h\gg 0$, and
\item[(b)] every nonzero graded Poisson ideal of $P$ contains $P_{\ge h}$ for some $h>0$.
\end{itemize}
Note that since we are using an Adams grading, each $P_i$ is finite-dimensional. Thus condition~(a) is equivalent to $P$ being infinite dimensional, and 
condition~(b) is equivalent to saying that every nonzero graded Poisson ideal of $P$ is cofinite dimensional.  
We refer the reader to \cite{RRZ} for further results and discussion on 
projectively simple rings.

\begin{lemma}
\label{xxlem3.2}
Let $S$ and $A$ be domains, and suppose that $S$ is a Poisson simple algebra. Let $A$ be a
Poisson algebra such that $Q(A)$ has a trivial Poisson center.
\begin{enumerate}
\item[(1)]
If $A$ is Poisson simple, then so is 
$S\otimes A$.
\item[(2)]
Suppose $A$ is a projectively simple connected $w$-graded Poisson algebra. 
Then every graded non-zero Poisson ideal of 
$S\otimes A$ contains $S\otimes A_{\geq h}$ for some $h>0$.
\end{enumerate}
\end{lemma}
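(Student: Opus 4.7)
The plan is to treat both parts uniformly by extracting from any nonzero Poisson ideal $I$ of $S\otimes A$ an element of the form $a\otimes 1_S$, thereby reducing the problem to a Poisson ideal of $A$ and then invoking the appropriate hypothesis on $A$. The only subtlety lies in respecting the grading in part~(2).

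Since $Q(A)$ has trivial Poisson center, Lemma~\ref{xxlem3.1}, applied after identifying $S\otimes A$ with $A\otimes S$, provides a pure tensor $a_0\otimes s_0\in I$ with $a_0\in A$ and $s_0\in S$ both nonzero. For part~(2) I would give $S\otimes A$ the grading induced from $A$ alone (placing $S$ entirely in degree~$0$), then split $a_0$ into its $A$-homogeneous components; since $I$ is graded, each piece $a_0^{(j)}\otimes s_0$ still lies in $I$, so I may assume $a_0$ is nonzero and homogeneous. Next I would form
\[
J_S:=\{s\in S\mid a_0\otimes s\in I\}
\]
and use the tensor-product Poisson bracket formula $\{a\otimes s,a'\otimes s'\}=\{a,a'\}\otimes ss'+aa'\otimes\{s,s'\}$ to verify routinely that $J_S$ is a Poisson ideal of $S$ containing $s_0\ne 0$. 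Poisson simplicity of $S$ then forces $J_S=S$, so in particular $a_0\otimes 1_S\in I$.

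The final step is to transfer the problem to $A$ via
\[
J_A:=\{a\in A\mid a\otimes 1_S\in I\},
\]
which the same bracket computation shows to be a Poisson ideal of $A$ containing $a_0$. In part~(2) it is also graded, because $1_S$ sits in degree~$0$ and $I$ is graded. Part~(1) then follows from Poisson simplicity of $A$, which forces $J_A=A$ and hence $I=S\otimes A$. Part~(2) follows from projective simplicity of $A$: some $A_{\ge h}\subseteq J_A$, and multiplying $A_{\ge h}\otimes 1_S\subseteq I$ on the left by $1_A\otimes S$ yields $S\otimes A_{\ge h}\subseteq I$.

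The main obstacle I anticipate is the grading reduction in part~(2), since Lemma~\ref{xxlem3.1} is not itself a graded statement and so does not hand us a homogeneous pure tensor directly. The remedy is to invoke Lemma~\ref{xxlem3.1} first and then decompose the resulting pure tensor into its $A$-homogeneous summands, which is legitimate precisely because the grading on $S\otimes A$ is induced from $A$ alone. Everything else reduces to direct applications of the tensor-product Poisson bracket formula.
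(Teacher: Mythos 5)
Your proposal is correct and follows essentially the same route as the paper: invoke Lemma~\ref{xxlem3.1} to get a pure tensor in $I$, use Poisson simplicity of $S$ (via the ideal you call $J_S$, the paper's $J_a$) to obtain an element of the form $a\otimes 1_S\in I$, and then apply Poisson simplicity, respectively projective simplicity, of $A$ to the resulting (graded) Poisson ideal of $A$ --- the paper phrases this last step as $I\cap(\Bbbk\otimes A)$ rather than your $J_A$, which is the same thing. Your preliminary reduction to a homogeneous $a_0$ in part~(2) is harmless but not needed, since $J_A$ is automatically a graded ideal once $I$ is graded and $S$ sits in degree~$0$.
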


In part (2), we use the grading from $A$ and 
define $\deg s=0$ for all $s\in S$. 

\begin{proof}[Proof of Lemma \ref{xxlem3.2}]
Let $I$ be a nonzero Poisson ideal of $S\otimes A$. By Lemma \ref{xxlem3.1}, there is an element $0\neq s\otimes a\in I$ where $s\in S$ and $a\in A$. Consider $J_{a}:=\{s\in S\mid s\otimes a\in I \}$. Using the equality 
$ \{ s'\otimes 1 , s \otimes a\} = 
\{ s', s\} \otimes a 
$, for every $s, s' \in S$,  it is easy to see that 
 $J_{a}$ is a nonzero Poisson ideal of $S$.
Since $S$ is Poisson simple, $J_{a}=S$ and $1\otimes a\in I$.

(1) By the above, we may assume that $1\otimes a\in I$.  Thus  $I\cap (\Bbbk\otimes A)$ is not zero. Since $A$ is Poisson simple, so is $ \Bbbk \otimes A$ and consequently 
$1\otimes 1 \in \Bbbk\otimes A  = I\cap (\Bbbk \otimes A)  \subseteq I$.  So 
$I=S\otimes A$.

(2) Let $I$ be a nonzero graded Poisson ideal of 
$S\otimes A$. 
By the first paragraph, $1\otimes a\in I$
for some $0\neq a\in A$. 
Since $I\cap (\Bbbk\otimes A)$ is not zero and $A$ is projectively simple,
$I\cap (\Bbbk \otimes A)$
contains $1\otimes A_{\geq h}$ for 
some $h>0$. So 
The assertion follows.
\end{proof}

\begin{lemma}
\label{xxlem3.3}
Let $A$ be a Poisson domain such that $u_{\mathrm{good}, w}(A) = A$. 
Let $P$ be a Poisson domain satisfying the following conditions:
\begin{enumerate}
    \item[(i)] $u_{\mathrm{good}, w}(P) = \Bbbk$;
    \item[(ii)] $P$ has a unique  
    nontrivial negatively-${\mathbb N}$-$w$-filtration ${\mathbf G}'$;
    \item[(iii)]
    $\operatorname{gr}_{{\mathbf G}'} P$ is projectively simple domain;
    \item[(iv)] $Q(\operatorname{gr}_{{\mathbf G}'} P)$ has trivial Poisson center.
\end{enumerate}
Then $A \otimes P$ has a unique nontrivial negatively-${\mathbb N}$-$w$-filtration.
\end{lemma}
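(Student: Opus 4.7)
The plan is to show that the tensor-product filtration ${\mathbf G}$ on $A\otimes P$ given by $G_i(A\otimes P):=A\otimes G'_i(P)$ for all $i\leq 0$ is the unique nontrivial good negatively-${\mathbb N}$-$w$-filtration of $A\otimes P$. Existence is routine: axioms (F1)--(F4) for ${\mathbf G}$ reduce to those for ${\mathbf G}'$; the associated graded is $\gr_{\mathbf G}(A\otimes P)\cong A\otimes\gr_{{\mathbf G}'}P$, which is a domain because $\Bbbk$ is algebraically closed; and ${\mathbf G}$ is nontrivial since hypotheses (i) and (ii) together force $G'_0(P)=\Bbbk$.

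Now let ${\mathbf F}$ be any nontrivial good negatively-${\mathbb N}$-$w$-filtration of $A\otimes P$. The first step is to restrict ${\mathbf F}$ to the two tensor factors, defining ${\mathbf F}^{(A)}$ on $A$ by $F^{(A)}_i(A):=\{a\in A:a\otimes 1\in F_i\}$ and ${\mathbf F}^{(P)}$ on $P$ analogously. Both are good negatively-${\mathbb N}$-$w$-filtrations because their associated gradeds embed in the domain $\gr_{\mathbf F}(A\otimes P)$. Since $u_{good,w}(A)=A$, the restriction ${\mathbf F}^{(A)}$ is trivial, so $A\otimes 1\subseteq F_i$ for all $i\leq 0$. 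By hypothesis (ii), ${\mathbf F}^{(P)}$ is either trivial or equals ${\mathbf G}'$; in the trivial case $1\otimes P\subseteq F_0$ combines with $A\otimes 1\subseteq F_0$ to give $F_0=A\otimes P$, contradicting nontriviality of ${\mathbf F}$. Therefore ${\mathbf F}^{(P)}={\mathbf G}'$.

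The central step is to construct a graded Poisson algebra homomorphism
\[
\phi\colon A\otimes\gr_{{\mathbf G}'}P\longrightarrow\gr_{\mathbf F}(A\otimes P)
\]
by sending $a\otimes\bar p$, where $\bar p$ is the class of $\tilde p\in G'_i(P)$, to the class of $a\otimes\tilde p$ in $F_i/F_{i+1}$; well-definedness follows from ${\mathbf F}^{(P)}={\mathbf G}'$. I expect the injectivity of $\phi$ to be the main obstacle. If $\ker\phi\neq 0$, then since $\ker\phi$ is a graded Poisson ideal, Lemma~\ref{xxlem3.1} (applied after swapping the tensor factors, which is justified by hypothesis (iv) stating that $Q(\gr_{{\mathbf G}'}P)$ has trivial Poisson center) yields a nonzero element of the form $a\otimes p\in\ker\phi$; by the graded decomposition we may further take $p\in\gr_{{\mathbf G}'}P$ homogeneous of some degree $-i$ with lift $\tilde p\in G'_i\setminus G'_{i+1}$. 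Then $\phi(a\otimes p)=0$ means $a\otimes\tilde p\in F_{i+1}$. On the other hand, in the domain $\gr_{\mathbf F}(A\otimes P)$ valuations are multiplicative, so
\[
v_{\mathbf F}(a\otimes\tilde p)=v_{\mathbf F}(a\otimes 1)+v_{\mathbf F}(1\otimes\tilde p)=0+i=i,
\]
using $F_1=0$ for the first equality and ${\mathbf F}^{(P)}={\mathbf G}'$ for the second; this contradicts $v_{\mathbf F}(a\otimes\tilde p)\geq i+1$.

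Granted injectivity, the final step is to deduce $F_i=A\otimes G'_i(P)$ for every $i$. The inclusion $\supseteq$ is immediate. For the reverse, given $x\in F_i$ write $x=\sum_j a_j\otimes p_j$ with $\{a_j\}$ linearly independent over $\Bbbk$, and set $m:=\min_j v_{{\mathbf G}'}(p_j)$, $J:=\{j:v_{{\mathbf G}'}(p_j)=m\}$. If no cancellation occurs in the leading term, then $v_{\mathbf F}(x)=m\geq i$, forcing every $p_j\in G'_i$; otherwise the leading term cancels and $\sum_{j\in J}\phi(a_j\otimes[p_j])=0$ in $F_m/F_{m+1}$, so the injectivity of $\phi$ combined with the linear independence of $\{a_j\}$ forces each $[p_j]=0$, contradicting $v_{{\mathbf G}'}(p_j)=m$. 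Hence ${\mathbf F}={\mathbf G}$, completing the uniqueness argument.
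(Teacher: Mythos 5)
Your construction of the tensor filtration, the reduction showing that the restriction of a nontrivial filtration to $A$ is trivial and its restriction to $P$ equals ${\mathbf G}'$, and the use of Lemma~\ref{xxlem3.1} (with the factors swapped, justified by hypothesis (iv)) to reduce the kernel question to a pure tensor are all sound, and your route is genuinely different from the paper's: you work directly over $A$ rather than localizing to $K=Q(A)$, you bypass Lemma~\ref{xxlem3.2}(2) and the citation of \cite[Lemma 2.8]{HTWZ1}, and you close the argument with the multiplicativity of the valuation $v_{\mathbf F}$ attached to ${\mathbf F}$, both for the injectivity of $\phi$ and for the final identification $F_i=A\otimes G'_i(P)$. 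Within its scope this argument is correct.

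The genuine gap is the scope: you prove uniqueness only among \emph{good} nontrivial negatively-${\mathbb N}$-$w$-filtrations, whereas the lemma asserts uniqueness among \emph{all} nontrivial negatively-${\mathbb N}$-$w$-filtrations, and this stronger form is what is actually invoked later (in Lemma~\ref{xxlem3.4} the comparison filtrations $\phi({\mathbf F})$ and $\{G_i+F_{i+1}\}$ are not known to be good, so a good-only uniqueness statement would not suffice). Your restriction to good ${\mathbf F}$ is not cosmetic: the two key steps rest on the identity $v_{\mathbf F}(a\otimes\tilde p)=v_{\mathbf F}(a\otimes 1)+v_{\mathbf F}(1\otimes \tilde p)$, which requires $\gr_{\mathbf F}(A\otimes P)$ to be a domain; for a general filtration one only has $v_{\mathbf F}(xy)\ge v_{\mathbf F}(x)+v_{\mathbf F}(y)$, which is perfectly consistent with $a\otimes\tilde p\in F_{i+1}$, so both the injectivity argument and the final comparison collapse. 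The paper avoids this by comparing ${\mathbf H}$ with ${\mathbf H}'$ through the graded morphism $\gr_{{\mathbf H}'}(K\otimes P)\cong K\otimes\gr_{{\mathbf G}'}P\to\gr_{\mathbf H}(K\otimes P)$ and proving its injectivity using only properties of the \emph{source} (Lemma~\ref{xxlem3.2}(2), i.e.\ projective simplicity of $\gr_{{\mathbf G}'}P$ together with hypothesis (iv), plus the fact that $1\otimes p$ maps to a nonzero class because the restriction of ${\mathbf H}$ to $P$ is ${\mathbf G}'$); at no point does it need $\gr_{\mathbf H}$ to be a domain, after which \cite[Lemma 2.8(2)]{HTWZ1} gives ${\mathbf H}={\mathbf H}'$. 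The fact that your argument never uses the projective simplicity in hypothesis (iii) (only that $\gr_{{\mathbf G}'}P$ is a domain) is a symptom of this: projective simplicity is precisely what replaces the valuation argument when ${\mathbf H}$ is not assumed good. To repair your proof you would need either to restate the lemma for good filtrations (and then re-examine its applications) or to replace the valuation computations by a source-side argument as in the paper.
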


\begin{proof}
Let ${\mathbf G}'$ be the unique nontrivial   negatively-${\mathbb N}$-$w$-filtration
of $P$ provided by (ii).  
Define a filtration ${\mathbf H}'$ of $A\otimes P$ by $H'_i(A\otimes P): =A\otimes G'_i(P)$ for all $i$. 
Then it is a {{nontrivial}} negatively-${\mathbb N}$-$w$-filtration of $A\otimes P$. It remains to show the uniqueness.

Let ${\mathbf H}$ be any nontrivial  negatively-${\mathbb N}$-$w$-filtration of $A\otimes P$. By restriction, it induces a negatively-${\mathbb N}$-$w$-filtration of $A$, denoted by ${\mathbf F}$ (resp. a negatively-${\mathbb N}$-$w$-filtration  of $P$, denoted by ${\mathbf G})$. Since $u_{good, w}(A)=A$,
${\mathbf F}$ is trivial,  that is  $F_i(A)=A$ for all $i\leq 0$ and $F_i(A)=0$ for all
$i>0$. Since $P$ has the unique {{nontrivial}} negatively-${\mathbb N}$-$w$-filtration,
${\mathbf G}$ is either trivial or equal to ${\mathbf G}'$. If ${\mathbf G}$ is trivial, we obtain that $H_0(A\otimes P)=A\otimes P$ which implies that ${\mathbf H}$ is trivial, yielding a contradiction. So it forces that ${\mathbf G}={\mathbf G}'$. Since 
$A\otimes \Bbbk \subseteq H_0(A\otimes P)$, localizing 
elements in $A\setminus \{0\}$, the filtration ${\mathbf H}$ induces a
negatively-${\mathbb N}$-$w$-filtration of $K\otimes P$ where $K=Q(A)$, which is denoted still by ${\mathbf H}$. Similarly, ${\mathbf H}'$ induces a negatively-${\mathbb N}$-$w$-filtration of $K\otimes P$, which is still denoted by ${\mathbf H}'$. For each $i\leq 0$,
$H'_{i}(K\otimes P)=K\otimes G'_i(P)
\subseteq H_i(K\otimes P)$. By 
\cite[Lemma 2.8(1)]{HTWZ1}, there 
is a natural graded Poisson algebra morphism
$$\phi: \gr_{{\mathbf H}'} K\otimes P
(\cong K\otimes \gr_{{\mathbf G}'} P)
\to \gr_{\mathbf H} K\otimes P.$$ 
{
The kernel of $\phi$ is a graded Poisson ideal, and so by 
Lemma~\ref{xxlem3.2}(2), if $\ker(\phi)$ is nonzero, then 
there exists an integer $m \leq 0$ such that 
$\ker(\phi)$ contains all elements 
$1 \otimes p$ for every 
$p + {\mathbf G}'_{i + 1}(P) \in 
{\mathbf G}'_{i }(P)/ {\mathbf G}'_{i + 1}(P)
$ with $i \leq m$. 
Since $\phi$ maps $1 \otimes p$ to $1 \otimes p$ for all 
$p + {\mathbf G}'_{i + 1}(P) \in 
{\mathbf G}'_{i }(P)/ {\mathbf G}'_{i + 1}(P)
$ and $i < 0$, this implies that 
${\mathbf G}'_{i}(P) = {\mathbf G}'_{i+1}(P)$ for all $i \leq m$, 
which is a contradiction, since 
$\operatorname{gr}_{{\mathbf G}'} P$ is a projectively simple  
and therefore infinite dimensional. 
Thus $\ker(\phi) = 0$ and $\phi$ is injective. 
By \cite[Lemma~2.8(2)]{HTWZ1}, 
${\mathbf H} = {\mathbf H}'$ on $K \otimes P$. 
Restricted to $A \otimes P$, we obtain ${\mathbf H} = {\mathbf H}'$ as required.

}
\end{proof}

\begin{lemma}
\label{xxlem3.4}

Let $\Omega$ be a homogeneous i.s. potential of degree at least $4$. 
Let $K$ be a field which is a Poisson $\kk$-algebra, and let $\xi \in \Bbbk$. 
Then every injective $K$-algebra endomorphism of $K \otimes P_{\Omega-\xi}$ 
that is a Poisson $\kk$-algebra morphism is bijective. 
\end{lemma}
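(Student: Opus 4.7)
The plan is to extract a unique filtration on $K \otimes P_{\Omega-\xi}$ via Lemma~\ref{xxlem3.3}, force $f$ to preserve it, and conclude by a graded-bijectivity check. Setting $n := \deg\Omega - 3 \geq 1$, I would apply Lemma~\ref{xxlem3.3} with $A = K$, $P = P_{\Omega-\xi}$, and $w = n$, to obtain that $K \otimes P$ admits a unique nontrivial negatively-$\mathbb{N}$-$n$-filtration $\mathbf{H}$, given by $H_i := K \otimes G'_i(P)$, where $\mathbf{G}'$ is the unique nontrivial filtration of $P$ furnished by Lemma~\ref{xxlem2.14}(3). Conditions (i)--(ii) of Lemma~\ref{xxlem3.3} on $P$ are exactly Lemma~\ref{xxlem2.14}(3), and (iii)--(iv) (projective simplicity of $\gr_{\mathbf{G}'} P \cong P_\Omega$ and triviality of the Poisson center of $Q(P_\Omega)$) follow from the i.s.\ hypothesis on $\Omega$. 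The condition $u_{\mathrm{good},n}(K) = K$ needs separate justification: any negatively-$\mathbb{N}$-$n$-filtration $\mathbf{F}$ of $K$ has $F_i = 0$ for $i \geq 1$, so its associated valuation $\nu$ satisfies $\nu \leq 0$ on $K^\times$; then $\nu(a) + \nu(a^{-1}) = \nu(1) = 0$ forces $\nu(a) = 0$ for every $a \in K^\times$, whence $F_0 = K$ and $\mathbf{F}$ is trivial.

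Given $f$, I would form the pullback filtration $H''_i := f^{-1}(H_i)$. Since $f$ is an injective $\kk$-algebra and Poisson morphism, and since $\gr_{\mathbf{H}}(K \otimes P) = K \otimes P_\Omega$ is a domain (as $\kk$ is algebraically closed), $\mathbf{H}''$ is a good negatively-$\mathbb{N}$-$n$-filtration of $K \otimes P$. By the uniqueness above, $\mathbf{H}''$ is either trivial or equal to $\mathbf{H}$. Triviality would force $\mathrm{im}(f) \subseteq H_0 = K \otimes \kk = K$, contradicting the injectivity of $f$ as a $K$-linear map, since $\dim_K(K \otimes P) = \dim_\kk P = \infty$. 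Hence $\mathbf{H}'' = \mathbf{H}$, and $f$ preserves $\mathbf{H}$.

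The induced graded map $\bar{f}$ on $\gr_{\mathbf{H}}(K \otimes P) \cong K \otimes P_\Omega$ is $K$-linear, grading-preserving, and injective; each graded piece $H_i / H_{i+1} \cong K \otimes (P_\Omega)_{-i}$ is a finite-rank free $K$-module, so $\bar{f}$ restricts to a bijection on each piece and is thus bijective overall. A standard successive-approximation argument, using that $\bigcap_i H_i = 0$ (automatic from $H_1 = 0$) so that every nonzero $y \in K \otimes P$ has a finite maximal index $i_0$ with $y \in H_{i_0}$, then lifts the bijectivity of $\bar{f}$ to surjectivity of $f$. I expect the main obstacle to be the verification of the projective simplicity of $P_\Omega$ and of the triviality of the Poisson center of $Q(P_\Omega)$, which rely on the commutative-algebraic analysis of i.s.\ potentials; once these facts are in hand, the pullback-filtration step, the ruling out of triviality, and the graded bijectivity step are all routine.
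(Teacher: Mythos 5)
Your proposal is correct and follows essentially the same route as the paper: invoke Lemma~\ref{xxlem3.3} (with $A=K$, $w=\deg\Omega-3$) to get a unique nontrivial negatively-$\mathbb{N}$-$w$-filtration on $K\otimes P_{\Omega-\xi}$, force the endomorphism to preserve it, observe that the graded pieces are finite-dimensional over $K$ so the induced graded map is bijective, and lift to surjectivity by the same finite descending-index argument. The only (harmless) variation is that you use the pullback filtration $f^{-1}(H_i)$, which yields both the preservation $f(H_i)\subseteq H_i$ and the injectivity of the graded map in one stroke, whereas the paper uses the filtration $\{F_i+\phi(F_i)\}$ and a separate filtration $\{G_i+F_{i+1}\}$ for the injectivity step; your explicit check that $u_{\mathrm{good},w}(K)=K$ is likewise consistent with what the paper uses implicitly.
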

\begin{proof}
Set $w=\deg \Omega -3$. Since ${\rm deg} (\Omega)$ is at least $4$, $w$ is greater than or equal to $1$. By Lemma \ref{xxlem2.14}(3), $P_{\Omega-\xi}$ has a unique {{nontrivial}} negatively-${\mathbb N}$-$w$-filtration such that the associated graded ring is isomorphic to $P_{\Omega}$. Since $\Omega$ has
 an isolated singularity, $P_{\Omega}$ is projectively simple, see \cite  [Lemma 1.6] {HTWZ1}. So $P:= P_{\Omega-\xi}$ satisfies  the hypotheses (i-iv) of Lemma \ref{xxlem3.3} As a result, $K\otimes P_{\Omega-\xi}$ (as a Poisson $\Bbbk$-algebra) has a unique nontrivial negatively-${\mathbb N}$-$w$-filtration, say ${\mathbf F}$, such that the associated graded ring is isomorphic to $K\otimes P_{\Omega}$.
Now let 
\[
\phi : K \otimes P_{\Omega-\xi} \longrightarrow K \otimes P_{\Omega-\xi}
\]
be an injective Poisson {$K$-algebra} endomorphism. Then it is easy to see that 
\[
\phi(\mathbf F) := \{\, F_i + \phi(F_i(K\otimes P_{\Omega-\xi})) \,\}
\]
is a nontrivial negatively–${\mathbb N}$–$w$–filtration of $K\otimes P_{\Omega-\xi}$. This implies that 
\[
\mathbf F = \phi(\mathbf F).
\]
That is, for each $i$,
\[
F_i + \phi(F_i(K\otimes P_{\Omega-\xi})) = F_i,
\]
and consequently
\[
\phi(F_i(K\otimes P_{\Omega-\xi})) \subseteq F_i.
\]
This shows that we obtain a $w$-graded Poisson $\Bbbk$-algebra morphism
\[
\gr \phi : \gr_{\mathbf F}(K\otimes P_{\Omega-\xi}) \longrightarrow 
\gr_{\mathbf F}(K\otimes P_{\Omega-\xi})
\]
sending $x + F_{i+1} \in F_i/F_{i+1}$ to $\phi(x) + F_{i+1}$.

Note that $\gr\phi$ is injective. Indeed, if
\[
G_i := \{\, x \in F_i \mid \phi(x) \in F_{i+1} \,\},
\]
then the collection $\{\, G_i + F_{i+1} \,\}$ is a negatively–${\mathbb N}$–$w$–filtration, which cannot be trivial since $K \otimes P_{\Omega-\xi}$ is not finite-dimensional. By uniqueness of $\mathbf F$, we must have
\[
G_i = F_{i+1},
\]
which shows that $\gr\phi$ is injective.

Recall that each $F_i$ is of the form
\[
F_i = K \otimes G'_i(P_{\Omega-\xi}),
\]
where $G'$ is the unique nontrivial negatively–${\mathbb N}$–$w$–filtration on $P_{\Omega-\xi}$. This implies that each $F_i/F_{i+1}$ is a finite-dimensional $K$-module. Because $\phi$ is a $K$-algebra morphism, for each $i$, and for all $k \in K$ and $x + F_{i+1} \in F_i/F_{i+1}$,
\[
\gr\phi(k(x+F_{i+1})) 
= \gr\phi(kx+F_{i+1}) 
= \phi(kx)+F_{i+1} 
= (k\phi(x))+F_{i+1} 
= k (\,\gr\phi(x+F_{i+1})).
\]
Hence the restriction of $\gr\phi$ to $F_i/F_{i+1}$ is an injective $K$-linear map; since $F_i/F_{i+1}$ is finite-dimensional over $K$, this restriction is also surjective for each $i$.

Suppose that $\phi$ is not surjective. Then we may choose a maximal $i \le 0$ such that there exists
\[
x \in F_i \setminus F_{i+1}
\]
which $x$ is not in the image of $\phi$. Since the restriction of $\gr\phi$ to $F_i/F_{i+1}$ is surjective, we have
\[
F_{i+1} + \phi(F_i) = F_i.
\]
Thus there exist $y \in F_{i+1}$ and $t \in \operatorname{im}(\phi)$ such that
\[
x = y + t.
\]
Now $y$ must lie in $\operatorname{im}(\phi)$; otherwise, we contradict the choice of $i$. But then $x = y + t$ is also in $\operatorname{im}(\phi)$, which is a contradiction.

Therefore, $\phi$ is surjective.
\end{proof}

Note that the above lemma provides a stronger assertion than merely a base change result of \cite[Theorem 0.9]{HTWZ1} since the Poisson structure on $K$ is not trivial. 

\begin{lemma}
\label{xxlem3.5}
Let $\Omega$ be a homogeneous i.s. potential 
of degree $d\geq 5$. Let $K$ be a Poisson 
field. The order of 
$\Aut_{Poi.alg}(K\otimes P_{\Omega-\xi}\mid K)$,
for any $\xi\in \Bbbk$,
is bounded above by $42d(d-3)^2$.
\end{lemma}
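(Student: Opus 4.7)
The plan is to reduce the bound to a graded question via the unique-filtration machinery. Setting $w=d-3$ and applying Lemmas \ref{xxlem2.14}(3) and \ref{xxlem3.3} with $A=K$ and $P=P_{\Omega-\xi}$, the Poisson $K$-algebra $K\otimes P_{\Omega-\xi}$ carries a unique nontrivial negatively-$\mathbb{N}$-$w$-filtration $\mathbf{F}$, whose associated graded is isomorphic, as a $w$-graded Poisson $K$-algebra, to $K\otimes P_{\Omega}$. Any $\phi\in \Aut_{Poi.alg}(K\otimes P_{\Omega-\xi}\mid K)$ pushes $\mathbf F$ forward to another nontrivial such filtration (nontriviality uses bijectivity of $\phi$ together with $F_{-1}\supsetneq F_{0}=K$), so by uniqueness $\phi$ preserves $\mathbf F$ and induces a graded Poisson $K$-algebra automorphism $\gr\phi$ of $K\otimes P_{\Omega}$ fixing $K$. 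Writing $G_{\gr}$ for the group of all such graded automorphisms, this defines a group homomorphism $\Phi\colon \Aut_{Poi.alg}(K\otimes P_{\Omega-\xi}\mid K)\to G_{\gr}$.

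The next step is to show $\Phi$ is injective and then to bound $|G_{\gr}|$. If $\gr\phi=\mathrm{id}$, then since $x,y,z\in F_{-1}\setminus F_{0}$ one has $\phi(1\otimes x_i)=1\otimes x_i+r_i$ with $r_i\in F_{0}=K$. Respecting the relation $\Omega-\xi=0$ forces $\Omega(x+r)\equiv\Omega(x)\pmod{\Omega-\xi}$ in $K[x,y,z]$; a degree count (the difference has total degree at most $d-1<d=\deg(\Omega-\xi)$) upgrades this to the polynomial identity $\Omega(x+r)=\Omega(x)$. Comparing the top-degree piece gives $\sum_{i}r_i\Omega_{x_i}=0$; by the isolated-singularity hypothesis the partials $\Omega_x,\Omega_y,\Omega_z$ form a regular sequence in $\Bbbk[x,y,z]$, and so are $\Bbbk$-linearly (hence $K$-linearly) independent, forcing $r_i=0$. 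Thus $\phi=\mathrm{id}$. To bound $|G_{\gr}|$, any element is determined by its action on the degree-one piece $K\otimes(\Bbbk x+\Bbbk y+\Bbbk z)$, yielding an injection into $GL_3(K)$ via a matrix $M$. Using the Jacobian formula \eqref{F0.1.1} and the chain rule, Poisson compatibility distills into the single identity $\Omega(Mx)=\det(M)\cdot\Omega(x)$. Let $G\le GL_3(K)$ be this subgroup. Its intersection with the scalar matrices $\{\mu I:\mu\in K^{\times}\}$ consists of those $\mu$ with $\mu^{d}=\mu^{3}$, equivalently $\mu^{d-3}=1$, so $|G\cap K^{\times}I|\le d-3$. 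The image of $G$ in $PGL_3(\overline K)$ preserves the smooth projective plane curve $C=\{\Omega=0\}\subseteq\mathbb{P}^{2}_{\overline K}$ (smoothness is the isolated-singularity hypothesis) of genus $g=(d-1)(d-2)/2\ge 6$, so the classical Hurwitz bound yields $|G/(G\cap K^{\times}I)|\le 84(g-1)=42d(d-3)$. Multiplying the two factors gives
\[
|\Aut_{Poi.alg}(K\otimes P_{\Omega-\xi}\mid K)|\;\le\;|G_{\gr}|\;\le\;|G|\;\le\;42d(d-3)^{2}.
\]

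I anticipate the main obstacle to be the clean reduction of Poisson compatibility to the algebraic identity $\Omega(Mx)=\det(M)\Omega(x)$ when the entries of $M$ live in a nontrivial Poisson field $K$: the tensor-product bracket on $K\otimes P_{\Omega}$ involves both $\{-,-\}_{K}$ and $\{-,-\}_{P_{\Omega}}$, and one has to verify that the extra contributions from $\{-,-\}_{K}$ only impose additional constraints on $M$ (never weakening the identity), so that the Hurwitz-based upper bound still applies. A secondary, more routine point is that Hurwitz's theorem is invoked after base change to $\overline K$ and then inherited by the $K$-subgroup $G/(G\cap K^{\times}I)\subseteq PGL_3(\overline K)$.
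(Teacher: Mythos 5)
Your proposal is correct in substance, and its first half --- reducing to graded automorphisms of $K\otimes P_{\Omega}$ via the unique nontrivial negatively-${\mathbb N}$-$(d-3)$-filtration (Lemmas \ref{xxlem2.14}(3) and \ref{xxlem3.3} with $A=K$) and killing the kernel by the $K$-linear independence of $\Omega_x,\Omega_y,\Omega_z$ coming from the isolated singularity --- is essentially the paper's Steps 2 and 3, with the cases $\xi=0$ and $\xi\neq 0$ treated uniformly. Where you genuinely diverge is in bounding the graded group. The paper notes that since $\phi$ fixes $K$ pointwise, bracketing with $K\otimes 1$ forces $\phi(1\otimes x)\in \Bbbk'\otimes P_{\Omega}$, where $\Bbbk'$ is the Poisson center of $K$; restriction then embeds $\Aut_{gr.Poi.alg}(K\otimes P_{\Omega}\mid K)$ into $\Aut_{gr.Poi.alg}(\Bbbk'[x,y,z]/(\Omega))$, a trivial-bracket situation whose order is at most $42d(d-3)^2$ by the final part of the proof of \cite[Theorem 8.2]{HTWZ1}. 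You instead linearize directly over $K$ (a matrix $M\in {\rm GL}_3(K)$ satisfying $\Omega(Mx)=\det(M)\Omega(x)$, scalars bounded by $d-3$, and the Hurwitz bound $84(g-1)=42d(d-3)$ for the smooth plane curve), in effect re-proving over $K$ the bound that the paper simply imports; this is more self-contained but duplicates the cited argument. The obstacle you flag is real but resolves exactly as you predict: in $\{\phi(x_i),\phi(x_j)\}$ the contributions from $\{-,-\}_K$ lie in $K\otimes P_2$, while the remaining terms and $\phi(\{x_i,x_j\})$ lie in $K\otimes P_{d-1}$ with $d-1\geq 4$, so comparing Adams-graded components shows the $K$-bracket terms only impose additional vanishing constraints, and the degree-$(d-1)$ component pins the well-definedness constant $c$ in $\Omega(Mx)=c\,\Omega(x)$ to $\det M$. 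The paper's Poisson-center trick gets this for free (entries in $\Bbbk'$ make those terms vanish identically), whereas your route yields a bound proved directly over $K$ at the cost of writing out this graded comparison and the curve-automorphism argument; to be complete you should also record that a good negatively-${\mathbb N}$-filtration on the Poisson field $K$ is necessarily trivial (i.e. $K$ is $u_{good,w}$-maximal, \cite[Corollary 1.4]{HNWZ1}), which Lemma \ref{xxlem3.3} requires of its first factor.
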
 

\begin{proof}
We break our proof into three steps. In the first two steps, we consider the case when $\xi=0$ and the last step deals with the case when $\xi\neq 0$.

\noindent
{\bf Step 1:} 
Recall that $P_{\Omega}$ has the standard grading
\[
P_{\Omega} = \bigoplus_{i \in \Bbb Z} P_i,
\]
as explained before Lemma~\ref{xxlem2.14}.
On the other hand,
\[
K \otimes P_{\Omega} = \bigoplus_{i \in \Bbb Z} K \otimes P_i
\]
is also a graded Poisson algebra. The goal of this step is to show that the cardinality of $
\Aut_{\mathrm{gr.Poi.alg}}(K \otimes P_{\Omega} \mid K),$ the subgroup of $\Aut_{\mathrm{Poi.alg}}(K \otimes P_{\Omega} \mid K)$ consisting of grading-preserving automorphisms, is bounded above by
$42\, d(d-3)^2$.

First, we show that this cardinality is bounded by 
the cardinality of $
\Aut_{\mathrm{gr.Poi.alg}}(P'_{\Omega}),$
where $P^{\prime}_{\Omega}:=\Bbbk^{\prime}[x,y,z]/(\Omega)$ and $\Bbbk'$ denotes the Poisson center of $K$.
Note that $\Bbbk \subseteq \Bbbk^{\prime} \subseteq K$, and that $\Bbbk^{\prime}$ is a field with trivial Poisson bracket. 

Now let
\[
\phi \in \Aut_{\mathrm{gr.Poi.alg}}(K \otimes P_{\Omega} \mid K).
\]
Since $
\{\lambda \otimes 1,\; \lambda' \otimes x\} = 0 $,  for any $\lambda \in K$, $\lambda' \in \Bbbk'$, and $x \in P_{\Omega}$, and since $\phi$ fixes $K$ pointwise, it follows that $
\{\lambda \otimes 1,\; \phi(\lambda' \otimes x)\} = 0.$
Writing
\[
\phi(\lambda' \otimes x) = \sum_{i = 1}^{n} \mu_i \otimes x_i,
\]
where $n$ is a finite integer, each $\mu_i \in K$ and the elements $x_i \in P_{\Omega}$ are linearly independent, we conclude that each $\mu_i$ belongs to $\Bbbk'$. Consequently, $
\phi(\lambda' \otimes x) \in \Bbbk' \otimes P_{\Omega}.$

Therefore, restriction induces an injective group homomorphism
\[
\Aut_{\mathrm{gr.Poi.alg}}(K \otimes P_{\Omega} \mid K)
\longrightarrow
\Aut_{\mathrm{gr.Poi.alg}}(\Bbbk' \otimes P_{\Omega} \mid \Bbbk').
\]
Note that
\[
\Bbbk' \otimes P_{\Omega} \cong \Bbbk'[x,y,z]/(\Omega) = P'_{\Omega},
\]
and clearly
\[
\Aut_{\mathrm{gr.Poi.alg}}(\Bbbk' \otimes P_{\Omega} \mid \Bbbk')
=
\Aut_{\mathrm{gr.Poi.alg}}(P'_{\Omega}).
\]

By the final part of the proof of \cite[Theorem~8.2]{HTWZ1}, the order of the group
\[
\Aut_{\mathrm{Poi.alg}}(P'_{\Omega})
\]
is bounded above by $42\, d(d-3)^2$. 
Consequently, the cardinality of
\[
\Aut_{\mathrm{gr.Poi.alg}}(K \otimes P_{\Omega} \mid K)
\]
is also bounded above by $42\, d(d-3)^2$, which proves the claim in this step.\\
\noindent
{\bf Step 2:} We now consider $\Aut_{Poi.alg}(K\otimes P_{\Omega}\mid K)$. Let $\sigma\in \Aut_{Poi.alg}(K\otimes P_{\Omega}\mid K)$. Since
$K\otimes P_{\Omega}$ has the unique 
negatively-${\mathbb N}$-$w$-filtration
${\mathbf F}$, $\sigma$ preserves ${\mathbf F}$. So $\sigma$ induces 
a Poisson automorphism $\sigma':
\gr_{\mathbf F} K\otimes P_{\Omega}
\to \gr_{\mathbf F} K\otimes P_{\Omega}
$ which preserves the degree. 
  Bearing in mind that $ 
\gr_{\mathbf F} K\otimes P_{\Omega} = 
K\otimes P_{\Omega}
$, we have that 
$\sigma'$
is in $\Aut_{gr.Poi.alg}(K\otimes P_{\Omega}\mid K)$. Replacing $\sigma$ by 
$\sigma (\sigma')^{-1}$ we may assume that
$\sigma'$ is the identity. We claim that 
$\sigma$ is the identity in this case.
Since $\sigma'$ is the identity, $\sigma(x)=x+a$, $\sigma(y)=y+b$ and $\sigma(z)=z+c$ where $a,b,c\in K$.  (Note that in $K \otimes P_{\Omega}$ we identify
$K$ with $K \otimes \kk$, and $\kk \otimes P_{\Omega}$ with $P_{\Omega}$.)
Observe that  in $K\otimes \Bbbk[x,y,z]$, $$\Omega(x+a, y+b,z+c)
=\Omega(x,y,z)+a \Omega_{x} +b \Omega_{y} +c \Omega_z + ldt$$
where $ltd$ is a sum of terms of Adams degree 
less or equal to $d-2$. Hence in $K\otimes P_{\Omega}$, 
$$\begin{aligned}
0&=\sigma(0)=\sigma(\Omega)
=\Omega (x+a, y+b,z+c)\\
&=\Omega(x,y,z)+a \Omega_{x} +b \Omega_{y} +c \Omega_z + ldt
=a \Omega_{x} +b \Omega_{y} +c \Omega_z + ldt
\end{aligned}$$
which implies that $a \Omega_{x} +b \Omega_{y} +c \Omega_z=0$ when restricted to degree $d-1$. Since $\Omega$ has i.s., 
$\Omega_x, \Omega_y, \Omega_z$ are $\Bbbk$-linearly independent. Consider these as elements in $K\otimes P_{\Omega}$, these are $K$-linearly independent, which implies that $a=b=c=0$ as required. Therefore, every Poisson algebra automorphism in $\Aut_{Poi.alg}(K\otimes P_{\Omega}\mid K)$ preserves the grading.
Thus, the assertion  for 
$\xi =  0$ follows from Step 1.

\noindent
{\bf Step 3:} Let $\xi\neq 0$. We will 
construct a group homomorphism from
$\Aut_{Poi.alg}(K\otimes P_{\Omega-\xi}\mid K)\to \Aut_{Poi.alg}(K\otimes P_{\Omega}\mid K)$. Let $\sigma\in \Aut_{Poi.alg}(K\otimes P_{\Omega-\xi}\mid K)$. Since
$K\otimes P_{\Omega -\xi}$ has the unique 
negatively-${\mathbb N}$-$w$-filtration
${\mathbf F}$ [Lemma \ref{xxlem3.3}], $\sigma$ preserves ${\mathbf F}$. So $\sigma$ induces 
a Poisson automorphism $\sigma':
\gr_{\mathbf F} K\otimes P_{\Omega-\xi}
\to \gr_{\mathbf F} K\otimes P_{\Omega-\xi}
$ which preserves the degree.  Bearing in mind that $ 
\gr_{\mathbf F} K\otimes P_{\Omega-\xi } = 
K\otimes P_{\Omega}
$, we have that 
$\sigma'$
is in $\Aut_{gr.Poi.alg}(K\otimes P_{\Omega}\mid K)$. 

Using the fact that ${\mathbf F}$ is unique, one can easily show that $\sigma\to \sigma'$ is a group homomorphism. We claim that this is an 
injective map. Suppose $\sigma'$ is the 
identity, by using the filtration ${\mathbf F}$, we have 
$\sigma(x)=x+a$, $\sigma(y)=y+b$ and $\sigma(z)=z+c$ where $a,b,c\in K$.
Note that in $K\otimes \Bbbk[x,y,z]$, $$\Omega(x+a, y+b,z+c)-\xi
=\Omega(x,y,z)-\xi +a \Omega_{x} +b \Omega_{y} +c \Omega_z + ldt$$
where $ltd$ is a sum of terms of degree 
less than or equal to  $d-2$. Hence in $K\otimes P_{\Omega}$, 
$$\begin{aligned}
0&=\sigma(0)=\sigma(\Omega-\xi)
=\Omega (x+a, y+b,z+c)\\
&=\Omega(x,y,z)-\xi+a \Omega_{x} +b \Omega_{y} +c \Omega_z + ldt
=a \Omega_{x} +b \Omega_{y} +c \Omega_z + ldt
\end{aligned}$$
which implies that $a \Omega_{x} +b \Omega_{y} +c \Omega_z=0$ in the associated graded ring $\gr_{\mathbf F}
(K\otimes P_{\Omega-\xi})$. Since $\Omega$ has i.s., 
$\Omega_x, \Omega_y, \Omega_z$ are $\Bbbk$-linearly independent. Consider these as elements in $K\otimes P_{\Omega}$, these are $K$-linearly independent, which implies that $a=b=c=0$. So $\sigma$ is the 
identity and the map $\sigma\to \sigma'$ is injective. The assertion follows from Step 1.
\end{proof}

We are now ready to prove the main results.

\begin{proof}[Proof of Theorem \ref{xxthm0.2}]
(1) We use induction on $d$. If $d=1$, the assertion follows from 
\cite[Theorem 0.9]{HTWZ1}. 
Now we assume $d\geq 2$. Without loss of generality, we may assume that $\deg \Omega_1> \deg \Omega_2> \cdots >\deg \Omega_d=:m$.
Let $A$ be the Poisson domain $\bigotimes_{i=1}^{d-1} P_{\Omega_i-\xi_i}$. Then $P=A\otimes P_{\Omega_d-\xi_d}$. By Lemma \ref{xxlem2.14}(1),  $u_{good, m-3}(P_{\Omega_i-\xi_i})=P_{\Omega_i-\xi_i}$ for all $i\leq d-1$. By  Lemma \ref{xxlem2.11} (3), 
$u_{good, m-3}(A)=A$. By Lemma \ref{xxlem2.14}(3), $u_{good,m - 3}(P_{\Omega_d-\xi_d})=\Bbbk$. 
Let $\phi$ be an injective Poisson algebra endomorphism of $P=A\otimes P_{\Omega_d-\xi_d}$. Applying $u_{good, m -3}$ to $\phi$ and using Lemma \ref{xxlem2.14}(2), we obtain an injective Poisson algebra morphism
$$f\colon =u_{good, m -3}(\phi)\colon A\otimes \Bbbk \longrightarrow A\otimes \Bbbk.$$
By the induction hypothesis, $f$ is a bijective morphism. Replacing $\phi$ by $(f^{-1}\otimes Id) \phi$, we may assume that $f$ is the identity, namely, the restriction of $\phi$ to
$A$ is the identity. Localizing elements in $A\setminus \{0\}$, we obtain an injective Poisson algebra endomorphism
$$\psi\colon K\otimes P_{\Omega_{d}-\xi_d} 
\longrightarrow K\otimes P_{\Omega_d-\xi_d}$$
such that the restriction of $\psi$ on $A\otimes P_{\Omega_d-\xi_d}$ is $\phi$.
Since 
$f$ is identity,  $\psi$ is $K$-morphism and now by 
Lemma \ref{xxlem3.4}, $\psi$ is bijective. 
Since $\psi$ is the identity on $K$, by Lemma \ref{xxlem3.5}, there is an integer $m$
such that $\psi^{m}$ is the identity. This implies that $\phi^{m}$ is the identity. Therefore, $\phi$ is a bijection as required.

(2) This follows from the next corollary.

(3) This follows from induction, Lemma 
\ref{xxlem3.2} and \cite[Theorem 1 and Corollary 2]{UZ}. % (or change the reference).
\end{proof}

\begin{corollary}
\label{xxcor3.6}
Let $\Omega_1,\cdots,\Omega_d$ be a set of i.s.
potentials of distinct Adams degrees with 
$\deg \Omega_i\geq 5$ for all $i$. Let 
$\xi_1,\cdots,\xi_d$ be a set of scalars in 
$\Bbbk$. Let $P$ be the Poisson algebra 
$P_{\Omega_1-\xi_1}\otimes \cdots \otimes 
P_{\Omega_d-\xi_d}$.
\begin{enumerate}
\item[(1)]
Let $C$ be a Poisson domain. Then 
$\Aut_{Poi.alg}(C\otimes P\mid C)$ is bounded by $\prod_{i=1}^{d} \{ 42 \deg \Omega_i (\deg \Omega_i-3)^2\}$.
\item[(2)]
Let $L_{\Lambda}(\Bbbk)$ be a simple Poisson torus as defined in Example {\rm{\ref{xxexa0.5}}}. Then
there is a short exact sequence
$$1\to \Aut_{Poi.alg}(L_{\Lambda}(\Bbbk) \otimes P\mid L_{\Lambda}(\Bbbk) ) \to \Aut_{Poi.alg}(L_{\Lambda}(\Bbbk) \otimes P)
\to \Aut_{Poi.alg}(P)\to 1. $$
\end{enumerate}
\end{corollary}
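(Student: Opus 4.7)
For part~(1), I would imitate the inductive argument used in the proof of Theorem~\ref{xxthm0.2}(1), now carrying the extra Poisson domain $C$ along. In the base case $d=1$, given $\phi \in \Aut_{Poi.alg}(C\otimes P_{\Omega_1-\xi_1}\mid C)$, localize the $C$-part to get an extension $\tilde\phi$ to $K\otimes P_{\Omega_1-\xi_1}$ where $K:=Q(C)$; Lemma~\ref{xxlem3.4} ensures this extension is bijective, so one obtains an injective homomorphism $\Aut_{Poi.alg}(C\otimes P_{\Omega_1-\xi_1}\mid C)\hookrightarrow \Aut_{Poi.alg}(K\otimes P_{\Omega_1-\xi_1}\mid K)$, and Lemma~\ref{xxlem3.5} bounds the target by $42\deg\Omega_1(\deg\Omega_1-3)^2$. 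For the inductive step, assume (after reordering) $\deg\Omega_d=m$ is smallest, write $P=A\otimes P_{\Omega_d-\xi_d}$ with $A:=\bigotimes_{i=1}^{d-1}P_{\Omega_i-\xi_i}$, pass to $K\otimes P$, and apply Theorem~\ref{xxthm2.12}(2) with ``$A$''$=K\otimes A$, ``$R$''$=P_{\Omega_d-\xi_d}$, and ``$C$''$=K$. The $u$-invariant hypotheses are checked as follows: $u_{firm,m-3}(P_{\Omega_d-\xi_d})=\kk$ by Lemma~\ref{xxlem2.14}(3) and algebraic closure; while $u_{good,m-3}(K\otimes A)=K\otimes A$ follows from Lemma~\ref{xxlem2.11}(3), Lemma~\ref{xxlem2.14}(1) on the $A$-side, and the auxiliary observation that every Poisson field $K$ satisfies $u_{good,w}(K)=K$ for all $w$ — a unit $a\in K$ lying outside $F_0$ would yield nonzero classes $\bar a$ and $\overline{a^{-1}}$ in the associated graded whose product represents $1\in F_0$, which vanishes in the relevant degree, contradicting the domain property of $\gr_{\mathbf F}K$. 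The resulting short exact sequence bounds $|\Aut_{Poi.alg}(K\otimes P\mid K)|$ by the product of $|\Aut_{Poi.alg}(K\otimes A\mid K)|$ (inductive hypothesis applied to the Poisson domain $K$) and $|\Aut_{Poi.alg}(K\otimes P\mid K\otimes A)|$ (further localization at $(K\otimes A)\setminus\{0\}$ and then Lemma~\ref{xxlem3.5}).

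For part~(2), I would apply Theorem~\ref{xxthm2.12}(2) with ``$A$''$=L_{\Lambda}(\kk)$, ``$R$''$=P$, and ``$C$''$=\kk$. The algebra $L_{\Lambda}(\kk)$ is $u_{good,w}$-maximal for every $w$: the same units-in-$F_0$ argument applied to each Laurent monomial $x_i^{\pm 1}$ forces every good $w$-filtration on $L_{\Lambda}(\kk)$ to be trivial (compare Example~\ref{xxexa2.5}). Setting $w:=\max_i(\deg\Omega_i-3)$, Lemma~\ref{xxlem2.14}(3) gives $u_{good,\deg\Omega_i-3}(P_{\Omega_i-\xi_i})=\kk$ for each $i$, and the monotonicity $\Phi_{good,w'}\subseteq\Phi_{good,w}$ for $w'\le w$ upgrades this to $u_{good,w}(P_{\Omega_i-\xi_i})=\kk$; Lemma~\ref{xxlem2.11}(3) together with algebraic closure then give $u_{firm,w}(P)=u_{good,w}(P)=\kk$. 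Theorem~\ref{xxthm2.12}(2) applied in this setup produces the desired short exact sequence.

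The main obstacle I anticipate is the simultaneous verification of the two $u$-invariant conditions in part~(2): we need $w$ large enough for $u_{good,w}(P)=\kk$ while still keeping $L_{\Lambda}(\kk)$ $u_{good,w}$-maximal. Fortunately the units-in-$F_0$ argument on $L_{\Lambda}(\kk)$ is uniform in $w$, so any $w$ works on that side. A secondary technicality is the factorization $u_{good,w}(K\otimes A)=K\otimes A$ in part~(1), which again reduces to $u_{good,w}(K)=K$ for Poisson fields and is handled by the same argument.
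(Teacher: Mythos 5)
Your part~(1) is correct and is essentially the paper's own argument: reduce to a Poisson field base (the paper invokes \cite[Corollary~1.4]{HNWZ1} for $u_{good,w}$-maximality of the field, where you give the direct ``units lie in $F_0$'' argument, which is fine and is exactly what that citation encapsulates), peel off the factor of smallest degree, apply Theorem~\ref{xxthm2.12}(2), bound the kernel after localizing by Lemma~\ref{xxlem3.5}, and bound the quotient by induction. The only difference from the paper is cosmetic (you write $K\otimes A$ with $K=Q(C)$ where the paper first replaces $C$ by a field).

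In part~(2), however, there is a mismatch you have glossed over. Theorem~\ref{xxthm2.12}(2) (equivalently Theorem~\ref{xxthm2.4}(2), which is what the paper uses) applied with $A=L_{\Lambda}(\kk)$, $R=P$, $C=\kk$ yields
$1\to \Aut_{Poi.alg}(L_{\Lambda}(\kk)\otimes P\mid L_{\Lambda}(\kk))\to \Aut_{Poi.alg}(L_{\Lambda}(\kk)\otimes P)\to \Aut_{Poi.alg}(L_{\Lambda}(\kk))\to 1$,
i.e.\ the third term is $\Aut_{Poi.alg}(L_{\Lambda}(\kk))$, not $\Aut_{Poi.alg}(P)$ as in the displayed statement; your claim that this setup ``produces the desired short exact sequence'' is therefore not literally true. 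Indeed, the sequence as printed cannot be exact: exactness would force $\Aut_{Poi.alg}(P)\cong \Aut_{Poi.alg}(L_{\Lambda}(\kk)\otimes P)/\Aut_{Poi.alg}(L_{\Lambda}(\kk)\otimes P\mid L_{\Lambda}(\kk))\cong \Aut_{Poi.alg}(L_{\Lambda}(\kk))$, and the latter is infinite (it contains all rescalings $x_i\mapsto c_ix_i$, $c_i\in\kk^{\times}$), while $\Aut_{Poi.alg}(P)$ is finite by part~(1) with $C=\kk$. The paper's own proof produces exactly the sequence with quotient $\Aut_{Poi.alg}(L_{\Lambda}(\kk))$, so the printed third term appears to be a typo in the statement rather than a flaw in your strategy; still, you should state explicitly that the quotient obtained is $\Aut_{Poi.alg}(L_{\Lambda}(\kk)\mid \kk)=\Aut_{Poi.alg}(L_{\Lambda}(\kk))$ rather than asserting agreement with the displayed sequence. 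Your verification of the $u$-invariant hypotheses for this application (uniform $u_{good,w}$-maximality of $L_{\Lambda}(\kk)$ via units, $u_{good,w}(P)=\kk$ for $w=\max_i(\deg\Omega_i-3)$ via Lemma~\ref{xxlem2.14}(3), monotonicity in $w$, and Lemma~\ref{xxlem2.11}(3)) is correct and parallels the paper's citation of \cite[Corollary~1.4]{HNWZ1} and Theorem~\ref{xxthm2.15}.
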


\begin{proof}
(1) Without loss of generality, we may assume $C$ is a Poisson field. In particular, by \cite[Corollary~1.4]{HNWZ1}, we may assume that $C$ is $u_{\mathrm{good},\,w}$-maximal for all $w \geq 0$.
 We use induction on $d$. If $d=0$, 
there is nothing to prove. If 
 $d=1$, the statement follows from Lemma \ref{xxlem3.5}.
Now we assume $d\geq 2$. Similar to the 
proof of Theorem \ref{xxthm0.2} and 
assume that $\deg \Omega_1> \deg \Omega_2> \cdots >\deg \Omega_d=\colon m$. Let $A$ be the Poisson domain $ C \otimes  \bigotimes_{i=1}^{d-1} P_{\Omega_i-\xi_i}$. Then $C\otimes P = A\otimes P_{\Omega_d-\xi_d}$.
So $A$ is 
$u_{good, m- 3}$-maximal and $P_{\Omega_d-\xi_d}$ is 
$u_{good, m - 3}$-minimal. By Theorem \ref{xxthm2.12}(2),
there is a short exact sequence of groups
{\small $$1\longrightarrow \Aut_{Poi.alg}(A\otimes P_{\Omega_d-\xi_d}\mid A) \longrightarrow \Aut(A\otimes P_{\Omega_d-\xi_d}\mid C)\longrightarrow \Aut_{Poi.alg}(A\mid C)\to 1.$$
}
Let $K$ be the fractional field of $A$, then 
$\Aut_{Poi.alg}(A\otimes P_{\Omega_d-\xi_d}\mid A)$ is a subgroup of $\Aut_{Poi.alg}(K\otimes P_{\Omega_d-\xi_d}\mid K)$ where latter is bounded by $ 42 \deg \Omega_d (\deg \Omega_d-3)^2$ by 
Lemma \ref{xxlem3.5}. Now the assertion follows from the induction and the above exact sequence. 

(2) Note that $L_{\Lambda}(\Bbbk)$ is $u_{good}$-maximal (see \cite[Corollary~1.4]{HNWZ1}) and $P$ is $u_{firm}$-minimal (follows from 
Theorem \ref{xxthm2.15}). The assertion now follows from Theorem \ref{xxthm2.4}(2), where we take $C = \kk$.
\end{proof}

\begin{proof}[Proof of Theorem \ref{xxthm0.6}]
Suppose that  $A$ is  the Poisson algebra $L_{\Lambda}(\Bbbk) \otimes P$ where $P=\bigotimes _{i=1}^{d} P_{\Omega_i-\xi_i}$. First, we prove that $A$ is Dixmier.
We use induction on $d$. If $d=0$, the 
assertion follows from Theorem \ref{xxthm2.6}, with this remark that $\kk$  is algebraically closed and every commutative domain is a firm domain.
Now we assume $d\geq 1$. Without loss of generality, 
we may assume that $\deg \Omega_1> \deg \Omega_2> 
\cdots >\deg \Omega_d=\colon m$. Let $A'$ be the 
Poisson domain $L_{\Lambda}(\Bbbk) \otimes 
\prod_{i=1}^{d-1} P_{\Omega_i-\xi_i}$. Then $A 
=A'\otimes P_{\Omega_d-\xi_d}$. By Lemma 
\ref{xxlem2.14}(1), $u_{good, m - 3}(P_{\Omega_i-
\xi_i})=P_{\Omega_i-\xi_i}$ for all $i\leq d-1$. Therefore using Lemma \ref{xxlem2.11} (2) and \cite[Corollary~1.4]{HNWZ1}, $u_{good, m- 3}(A')=A'$. We also use Lemma~\ref{xxlem2.14}(3) to obtain $u_{\mathrm{good},\,m-3}(P_{\Omega_d-\xi_d})=\Bbbk$.

Let $\phi$ be an injective Poisson algebra endomrophism of $A = A' \otimes P_{\Omega_d-\xi_d}$. Applying $u_{good, m-3}$ to $\phi$ and using Lemma \ref{xxlem2.11}(2), we obtain an injective Poisson algebra morphism
$$f\colon =u_{good, m - 3}(\phi)\colon A'\otimes \Bbbk \longrightarrow A'\otimes \Bbbk.$$
By the induction hypothesis, $f$ is a bijective morphism. Replacing $\phi$ by $(f^{-1}\otimes Id) \phi$, we may assume that $f$ is the identity, namely, the restriction of $\phi$ to
$A'$ is the identity. Inverting all elements in $A'\setminus \{0\}$, we obtain an injective Poisson algebra endomorphism
$$\psi: K\otimes P_{\Omega_{d}-\xi_d} 
\longrightarrow K\otimes P_{\Omega_d-\xi_d}$$
such that the restriction of $\psi$ on $A'\otimes P_{\Omega_d-\xi_d}$ is $\phi$.
Since $\psi$ is the identity on $K$, by Lemma \ref{xxlem3.4}, $\psi$ is bijective and consequently 
 by 
Lemma \ref{xxlem3.5}, there is an integer $m$
such that $\psi^{m}$ is the identity. This implies that $\phi^{m}$ is the identity. Therefore, $\phi$ is a bijection as required.

Now, to conclude Theorem~\ref{xxthm0.6}, it is enough to show that $A$ and $P$ are  ${\mathcal F}_{w}$-Dixmier whenever
\[
w \leq \min\{\deg \Omega_i\} - 1.
\]
Indeed, as defined earlier,  ${\mathcal F}_{w}$ is the class of Poisson firm domains $S$ such that
\[
u_{\mathrm{firm},\,w}(S) = \Bbbk.
\]
Since $\Bbbk$ is algebraically closed, every commutative domain is a firm domain. Therefore, by definition, we have $
{\mathcal G}_{w} \subseteq {\mathcal F}_{w}.$ 
So let us see  that $A$ is ${\mathcal F}_{w}$-Dixmier if $w\leq \min\{\deg \Omega_i\}-1 = m - 4$. 
By the above proof, one sees that $A$ is 
$u_{good, w}$-maximal. Then every $R$ in 
${\mathcal F}_{w}$ is $u_{good, w}$-minimal.
The assertion follows from Theorem \ref{xxthm2.12}(3). So this takes care of the case when $A=L_{\Lambda}(\Bbbk) \otimes 
\bigotimes_{i=1}^{d} P_{\Omega_i-\xi_i}$.

When $P= \bigotimes_{i=1}^{d} P_{\Omega_i-\xi_i}$,
by Theorem \ref{xxthm0.2}, $P$ is Dixmier.
Note that $P$ is $u_{good, w}$-maximal and 
that $R\in {\mathcal F}_{w}$ is 
$u_{good, w}$-minimal. By Theorem \ref{xxthm2.12}(3), $P$ is $R$-Dixmier.
\end{proof}

\section{Integral closure and 
complementary space}
\label{xxsec4}
In this section, we introduce the concepts of complementary spaces and modules, which help us better understand the Dixmier and $R$-Dixmier properties.  Among other results, we provide the proof of Theorem~\ref{xxthm0.9}.  %Moreover, for simplicity, we retain the assumptions of Section~3 and assume that our base field $\kk$ is algebraically closed.

\subsection{Closure of $A$}
\label{xxsec4.1}
First, we recall some lemmas from commutative algebra.

\begin{lemma}
\label{xxlem4.1}
Let $\{A_{i}\}$ be a set of integrally closed subalgebras inside a field $K$. Then the intersection $A := \bigcap_{i} A_i$ is integrally closed.
\end{lemma}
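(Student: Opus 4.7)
The plan is to use the definition of integral closure directly and observe that the property propagates through intersections essentially by inclusion. The key input is the trivial observation that if $A \subseteq A_i$, then every element integral over $A$ is also integral over $A_i$ (the same monic polynomial that witnesses integrality over $A$ has coefficients in $A_i$).

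Concretely, I would proceed as follows. Let $A := \bigcap_i A_i$, and suppose $x \in K$ is integral over $A$, so there is a monic polynomial $p(T) = T^n + a_{n-1}T^{n-1} + \cdots + a_0 \in A[T]$ with $p(x) = 0$. For each index $i$, since $A \subseteq A_i$, the coefficients $a_j$ lie in $A_i$, so $p(T) \in A_i[T]$ and hence $x$ is integral over $A_i$. Because $A_i$ is integrally closed in $K$ (equivalently, in its fraction field $Q(A_i) \subseteq K$, and $x$ belongs to $Q(A_i)$ since $x \in Q(A) \subseteq Q(A_i)$), it follows that $x \in A_i$. Since this holds for every $i$, we conclude $x \in \bigcap_i A_i = A$. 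Therefore $A$ is integrally closed in $K$ (and in particular in its own fraction field).

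This is essentially a one-paragraph argument; the only conceptual point is to make sure that the convention of ``integrally closed'' is matched with the ambient field $K$ (or, equivalently by the inclusion of fraction fields, with $Q(A_i)$), and there is no real obstacle. No deeper commutative-algebra machinery (Noetherianness, finite generation, normalization, etc.) is required, which is why the lemma is stated without any finiteness hypothesis on the index set or on the subalgebras $A_i$.
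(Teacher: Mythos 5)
Your core mechanism is exactly the paper's proof: pass the monic polynomial witnessing integrality over $A$ to each $A_i$ and invoke integral closedness of $A_i$. However, there is a convention problem that makes your write-up internally inconsistent and your final claim too strong. In this paper ``integrally closed'' means integrally closed in the algebra's \emph{own} fraction field (this is stated explicitly just before the definition of $\Phi(K)$ in Section 4), and that is \emph{not} equivalent to being integrally closed in the ambient field $K$: for instance $\Bbbk[t^{2}]\subset K=\Bbbk(t)$ is integrally closed in $Q(\Bbbk[t^{2}])=\Bbbk(t^{2})$, yet $t$ is integral over it (root of $T^{2}-t^{2}$) and does not lie in it. So your parenthetical ``equivalently, in its fraction field $Q(A_i)$'' is false, and the stronger conclusion ``$A$ is integrally closed in $K$'' neither follows from nor holds under the paper's hypotheses — it already fails for a single $A_i$.

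Relatedly, you begin with an arbitrary $x\in K$ integral over $A$, but then justify $x\in Q(A_i)$ by writing $x\in Q(A)\subseteq Q(A_i)$, which presupposes $x\in Q(A)$ — something you never assumed. The repair is exactly the paper's argument: take $x\in Q(A)$ integral over $A$ (that is all the lemma asserts under the paper's convention); then $x\in Q(A)\subseteq Q(A_i)$, $x$ is integral over $A_i$ via the same monic polynomial with coefficients in $A\subseteq A_i$, so $x\in A_i$ for every $i$ by integral closedness of $A_i$ in $Q(A_i)$, hence $x\in A$. With the quantifier restricted in this way, your proof is verbatim the one in the paper.
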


\begin{proof}
Let $x\in Q(A)\setminus A$ that is integral 
over $A$.  Then $x$ is a root of a monic polynomial $f(t)$ with coefficients in $A$.
So, for each $i$, 
$x\in Q(A_i)$ is a root of the same 
monic polynomial $f(t)$ whose coefficients 
are in $A\subseteq A_i$. Since each $A_i$ is 
integrally closed, $x\in A_i$ for all $i$.
So $x\in A$ as required.
\end{proof}

We say a Poisson domain $A$ is {\it integrally closed} if it is integrally closed in its fractional field $Q(A)$  as a commutative algebra. Let $\mathcal{K}$ denote the category of all Poisson fields over $\Bbbk$. 
For every $K\in \mathcal{K}$, let us define 
\[
\Phi(K)=\{\text{all Poisson subalgebras of}\, K\, \text{that are integrally closed}\}.
\]
First of all, we need to define a closure operation, which furnishes every Poisson domain $A$ with its closure in its fractional field $Q(A)$. 
\begin{definition}
\label{xxdef4.2}
Let $A$ be a Poisson domain.
\begin{enumerate}
\item[(1)]
Let $K\in {\mathcal K}$ and $f\colon A\longrightarrow K$ be an injective Poisson morphism, 
whose extension to $Q(A)$ is also denoted by $f$. The {\it closure} of $A$ over $f$ 
is defined to be 
$$\moc_{f}(A)
=\bigcap_{f(A)\subseteq B\in 
\Phi(K)} B.$$
Since $f(Q(A))\in \Phi(K)$, we have 
$A \subseteq f^{-1}(\moc_{f}(A))\subseteq Q(A)$. 
\item[(2)]
The {\it closure} of $A$ is 
defined to be 
$$\moc(A)
=\bigcap_{{f\colon A\hookrightarrow K}, K\in {\mathcal K}}
f^{-1}(\moc_{f}(A)).$$
\end{enumerate}
\end{definition}

The following is our main definition.

{
\begin{definition}
\label{xxdef4.3}
Let $A$ be a Poisson domain and $\moc(A)\subseteq Q(A)$ its
closure.

\begin{enumerate}
\item[(1)]
A subspace $V\subseteq \moc(A)$ is called a 
\it{complementary space} of $A$ if $A\oplus V = \moc(A)$.
Any such space is denoted by $\CompS(A)$.

\item[(2)]
The \it{complement module} of $A$ is
\[
\CompM(A) := \moc(A)/A,
\]
viewed as an $A$-module.
\end{enumerate}
\end{definition}

It is clear that $\CompS(A)\cong \CompM(A)$ as vector spaces. Note that $\CompM(A)$ is uniquely
defined, but $\CompS(A)$ is not. We will explore some applications of complementary spaces.

}

\begin{lemma}
\label{xxlem4.4}
Let $A$ be a Poisson domain. Then $\moc(A)=
f^{-1}(\moc_{f}(A))$ for every injective Poisson algebra map $f: A\to K$ where $K$ is a Poisson
field.
\end{lemma}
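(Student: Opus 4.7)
The plan is to show that the single-embedding closure $f^{-1}(\moc_f(A))$ is actually independent of the choice of $f$; once this is established, the intersection in Definition \ref{xxdef4.2}(2) collapses to any one of its terms. Since one inclusion $\moc(A)\subseteq f^{-1}(\moc_f(A))$ is immediate from the definition, the only real content is to prove the reverse inclusion, and it will be convenient to use the ``canonical'' embedding $\iota\colon A\hookrightarrow Q(A)$ as a reference point. I will show $f^{-1}(\moc_f(A))=\iota^{-1}(\moc_\iota(A))$ for every $f$.

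For the inclusion $f^{-1}(\moc_f(A))\subseteq \iota^{-1}(\moc_\iota(A))$, I take $x\in f^{-1}(\moc_f(A))$ and an arbitrary integrally closed Poisson subalgebra $C$ of $Q(A)$ containing $A$. Transporting along the Poisson isomorphism $f\colon Q(A)\xrightarrow{\sim} f(Q(A))$, the image $f(C)$ is a Poisson subalgebra of $K$ that is integrally closed in its own fraction field $f(Q(C))$; hence $f(C)\in\Phi(K)$ and $f(C)\supseteq f(A)$. By definition $\moc_f(A)\subseteq f(C)$, so $f(x)\in f(C)$ and, by injectivity of $f$, $x\in C$. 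Intersecting over $C$ yields $x\in \moc_\iota(A)$.

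For the reverse inclusion, I take $x\in \iota^{-1}(\moc_\iota(A))$ and a $B\in\Phi(K)$ with $f(A)\subseteq B$, and I aim to prove $f(x)\in B$. Set
\[
C:=f^{-1}\bigl(B\cap f(Q(A))\bigr)\subseteq Q(A).
\]
Then $A\subseteq C$, and $C$ is a Poisson subalgebra of $Q(A)$ since $B\cap f(Q(A))$ is an intersection of two Poisson subalgebras of the Poisson field $f(Q(A))$, and $f$ is a Poisson isomorphism onto $f(Q(A))$. The crucial point is that $C$ is integrally closed in $Q(C)$: if $y\in Q(B\cap f(Q(A)))\subseteq f(Q(A))$ is integral over $B\cap f(Q(A))$, then $y$ is integral over $B$; and because $Q(B)\supseteq Q(f(A))=f(Q(A))$, we have $y\in Q(B)$, so integral closedness of $B$ forces $y\in B$, hence $y\in B\cap f(Q(A))$. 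Thus $C\in\Phi(Q(A))$, and since $x\in\moc_\iota(A)\subseteq C$, we conclude $f(x)\in B\cap f(Q(A))\subseteq B$. As $B$ was arbitrary, $f(x)\in\moc_f(A)$.

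The main obstacle, which is exactly the technical step above, is verifying that the intersection $B\cap f(Q(A))$ remains integrally closed in its own fraction field; the rest of the argument is a direct manipulation of the definitions. Combining the two inclusions gives $f^{-1}(\moc_f(A))=\iota^{-1}(\moc_\iota(A))$ for every $f$, which is independent of $f$, so
\[
\moc(A)=\bigcap_{g\colon A\hookrightarrow K'\in\mathcal{K}} g^{-1}(\moc_g(A))=f^{-1}(\moc_f(A)),
\]
as claimed.
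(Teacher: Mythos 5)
Your proof is correct, and it establishes exactly what the paper's proof establishes, namely that $f^{-1}(\moc_{f}(A))$ is independent of the embedding $f$, but the route is organized differently. The paper first invokes Lemma \ref{xxlem4.1} to see that $\moc_{g}(A)$, and hence $g^{-1}(\moc_{g}(A))$, is an integrally closed Poisson subalgebra of $Q(A)$ containing $A$; pushing this single algebra forward along $f$ produces an element of $\Phi(K)$ containing $f(A)$, whence $\moc_{f}(A)\subseteq f\bigl(g^{-1}(\moc_{g}(A))\bigr)$ and so $f^{-1}(\moc_{f}(A))\subseteq g^{-1}(\moc_{g}(A))$ for any two embeddings $f,g$ at once, and symmetry finishes the argument. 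You instead anchor at the inclusion $\iota\colon A\hookrightarrow Q(A)$ and work member-by-member with the defining families: for one inclusion you push forward each $C\in\Phi(Q(A))$ individually (the paper's transport step, but applied to single members rather than to the whole closure, so Lemma \ref{xxlem4.1} is not needed), and for the reverse inclusion you pull back $B\cap f(Q(A))$ for each $B\in\Phi(K)$, where your inline verification that $B\cap f(Q(A))$ is integrally closed is precisely a two-term instance of Lemma \ref{xxlem4.1} with one term a field, proved by hand. What your version buys is self-containedness; what the paper's buys is brevity: once one knows $f^{-1}(\moc_{f}(A))\in\Phi(Q(A))$ (via Lemma \ref{xxlem4.1}), your second inclusion $\moc_{\iota}(A)\subseteq f^{-1}(\moc_{f}(A))$ is immediate from the definition of $\moc_{\iota}(A)$, with no pullback construction required. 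Note also that your final collapse of the intersection in Definition \ref{xxdef4.2}(2) is safe independently of whether $\iota$ is counted among the embeddings there, since you show every term of that (nonempty) intersection equals the common value $\moc_{\iota}(A)$.
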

\begin{proof}
Take an injective Poisson algebra map $f \colon A \to K$, where $K$ is a Poisson field.
By definition, $\moc_{f}(A)$ is the intersection of a family of integrally closed Poisson domains.
Since the intersection of Poisson subalgebras is again a Poisson algebra, it follows from Lemma~\ref{xxlem4.1}
that $\moc_{f}(A)$ is an integrally closed Poisson domain.
Therefore, using Part~(1) of Definition~\ref{xxdef4.2}, we have
$f^{-1}(\moc_{f}(A)) \in \Phi(Q(A))$; moreover, $f^{-1}(\moc_{f}(A))$ contains $A$.

Similarly, if $g \colon A \to K'$ is another injective Poisson algebra homomorphism, where $K'$ is a Poisson field,
then $g^{-1}(\moc_{g}(A)) \in \Phi(Q(A))$, and $g^{-1}(\moc_{g}(A))$ contains $A$.
We have
\[
f(A) \subseteq f\bigl(g^{-1}(\moc_{g}(A))\bigr) \subseteq f(Q(A)) \subseteq K.
\]
Since $g^{-1}(\moc_{g}(A))$ is a Poisson algebra, so is $f\bigl(g^{-1}(\moc_{g}(A))\bigr)$.
Consequently, by the definition of $\moc_{f}(A)$, we obtain
\[
\moc_{f}(A) \subseteq f\bigl(g^{-1}(\moc_{g}(A))\bigr).
\]
This implies that
\[
f^{-1}(\moc_{f}(A)) \subseteq g^{-1}(\moc_{g}(A)).
\]
Finally, using the definition of $\moc(A)$ in Part~(2) of Definition~\ref{xxdef4.2} and the fact that
$g \colon A \to K'$ was an arbitrary injective Poisson algebra homomorphism into a Poisson field, we conclude that
\[
\moc(A) = f^{-1}(\moc_{f}(A)).
\]

\end{proof}

\begin{lemma}
\label{xxlem4.5}
Let $A$ be a Poisson domain and $B$ be a Poisson subalgebra of $Q(A)$ containing $A$. If $B$ is integrally closed 
and is integral over $A$, then $\moc(A)=B$.
\end{lemma}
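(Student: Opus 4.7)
The plan is to reduce to the concrete description of $\moc(A)$ as an intersection inside $Q(A)$ using Lemma~\ref{xxlem4.4}, and then verify the two containments $\moc(A)\subseteq B$ and $B\subseteq \moc(A)$ directly.

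First, I would apply Lemma~\ref{xxlem4.4} to the inclusion $\iota\colon A\hookrightarrow Q(A)$, which gives the identification
\[
\moc(A)=\iota^{-1}(\moc_{\iota}(A))=\bigcap_{A\subseteq B'\in \Phi(Q(A))} B'.
\]
Thus $\moc(A)$ is the intersection of all Poisson subalgebras of $Q(A)$ that contain $A$ and are integrally closed in $Q(A)$. The containment $\moc(A)\subseteq B$ is then immediate: by hypothesis, $B$ itself is a Poisson subalgebra of $Q(A)$ that contains $A$ and is integrally closed, so $B$ is one of the sets in the intersection defining $\moc(A)$.

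For the reverse containment $B\subseteq \moc(A)$, I would fix an arbitrary $B'\in \Phi(Q(A))$ with $A\subseteq B'$ and show $B\subseteq B'$. Given any $b\in B$, since $B$ is integral over $A$, the element $b$ satisfies a monic polynomial with coefficients in $A\subseteq B'$, so $b$ is integral over $B'$. Because $A\subseteq B'\subseteq Q(A)$, the fraction fields agree: $Q(B')=Q(A)$, and hence $b\in Q(A)=Q(B')$. As $B'$ is integrally closed in its own fraction field, this forces $b\in B'$. Taking the intersection over all such $B'$ gives $B\subseteq \moc(A)$, which combined with the reverse containment yields $\moc(A)=B$.

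There is no real obstacle here; the only subtlety worth checking carefully is the equality of fraction fields $Q(B')=Q(A)$ (which uses $A\subseteq B'\subseteq Q(A)$) and that ``integrally closed'' in Definition~\ref{xxdef4.3} refers to being integrally closed in the ambient $Q(A)$, which is the same condition as being integrally closed in $Q(B')$ once these fields coincide. Once these are in place, the argument is a one-line use of transitivity of integrality combined with the intersection formula for $\moc(A)$.
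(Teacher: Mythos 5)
Your proof is correct and follows essentially the same route as the paper: use Lemma~\ref{xxlem4.4} with the inclusion $A\hookrightarrow Q(A)$ to write $\moc(A)$ as the intersection of all integrally closed Poisson subalgebras of $Q(A)$ containing $A$, note $B$ is one of them, and use integrality of $B$ over $A$ (plus $Q(B')=Q(A)$) to show $B$ is contained in every such subalgebra, so $B$ is the minimal one. The extra care you take with the equality of fraction fields is a welcome clarification of a point the paper leaves implicit, but it is not a different argument.
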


\begin{proof}
By Lemma \ref{xxlem4.4}, $\moc(A)=\moc_{f}(A)$ where $f$ is the inclusion $A\to Q(A)$. 
Since $B$ is integrally closed, $B\in \Phi(Q(A))$. Consequently, $\moc(A)\subseteq B$. Now let $C$ be any element in $\Phi(Q(A))$ containing $A$. Since $B$ is integral 
over $A$, $B\subseteq C$. So $B$ is the 
minimal element in $\Phi( Q(A))$  containing $A$. Therefore
$\moc(A)=B$.
\end{proof}

\subsection{Complementary  spaces and complement modules}
\label{xxsec4.2}

\begin{lemma}
\label{xxlem4.6}
Let $A$ be a Poisson domain.
\begin{enumerate}
\item[(1)]
$c(A)$ is an algebraic invariant.
\item[(2)]
$\CompM(A)$ (resp. $\CompS(A)$) is an algebraic
invariant.
\item[(3)]
There are inclusions of groups
$$\Aut(A)\subseteq \Aut(\moc(A))\subseteq \Aut(Q(A)).$$
\end{enumerate}
\end{lemma}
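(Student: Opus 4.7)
The plan is to reduce all three parts to a single observation: any Poisson algebra isomorphism $\phi\colon A \to A'$ of Poisson domains induces a compatible Poisson isomorphism $\tilde\phi\colon \moc(A) \to \moc(A')$. To establish this, first extend $\phi$ uniquely to a Poisson field isomorphism $\tilde\phi\colon Q(A) \to Q(A')$ (using that algebra isomorphisms of domains extend uniquely to fraction fields, and that the Poisson bracket on $Q(A)$ is determined by the one on $A$). Integral closedness of a subalgebra inside its fraction field is preserved by any algebra isomorphism, and being a Poisson subalgebra is preserved by $\tilde\phi$, so $\tilde\phi$ restricts to a bijection between the family $\{B \in \Phi(Q(A)) : B \supseteq A\}$ and $\{B' \in \Phi(Q(A')) : B' \supseteq A'\}$. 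By Lemma~\ref{xxlem4.4}, $\moc(A)$ and $\moc(A')$ are exactly the intersections of these two families, so $\tilde\phi(\moc(A)) = \moc(A')$, proving~(1).

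For~(2), the restriction $\tilde\phi|_{\moc(A)}$ extends $\phi\colon A \to A'$ to a Poisson isomorphism $\moc(A) \to \moc(A')$; passing to the quotient of $A$-modules yields an isomorphism $\CompM(A) = \moc(A)/A \to \moc(A')/A' = \CompM(A')$ that is compatible with $\phi$. For $\CompS(A)$, given any choice $V \subseteq \moc(A)$ with $A \oplus V = \moc(A)$, the image $\tilde\phi(V) \subseteq \moc(A')$ satisfies $A' \oplus \tilde\phi(V) = \moc(A')$, hence is a valid complementary space of $A'$; since any two complementary spaces are vector-space complements of the same subalgebra and are each isomorphic to $\CompM(\cdot)$, this settles the assertion for $\CompS$ up to the intrinsic non-uniqueness.

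For~(3), apply~(1) to an arbitrary element $\sigma \in \Aut_{Poi.alg}(A)$: the extension $\tilde\sigma$ to $Q(A)$ maps $\moc(A)$ onto itself, producing a Poisson automorphism of $\moc(A)$. Uniqueness of the extension makes $\sigma \mapsto \tilde\sigma|_{\moc(A)}$ an injective group homomorphism $\Aut_{Poi.alg}(A) \hookrightarrow \Aut_{Poi.alg}(\moc(A))$. For the second inclusion, any $\tau \in \Aut_{Poi.alg}(\moc(A))$ extends uniquely to $Q(\moc(A))$; but the inclusions $A \subseteq \moc(A) \subseteq Q(A)$ force $Q(\moc(A)) = Q(A)$, and this extension is injective in $\tau$.

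The main obstacle I anticipate is ensuring that the extension $\tilde\phi$ actually carries $\moc(A)$ onto $\moc(A')$ rather than into a strictly larger or smaller intermediate algebra; this is controlled by the symmetric description of $\moc$ in Lemma~\ref{xxlem4.4} and by applying $\tilde\phi^{-1}$ to reverse the inclusion. The secondary technical point is the non-canonical nature of $\CompS(A)$, which I would handle by observing that the isomorphism type of any vector-space complement of $A$ inside $\moc(A)$ is intrinsically determined by $\CompM(A)$.
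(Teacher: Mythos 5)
Your argument is correct and follows essentially the same route as the paper: both hinge on Lemma~\ref{xxlem4.4} to identify $\moc(A)$ with the intersection of integrally closed Poisson subalgebras of $Q(A)$ containing $A$, transfer this description along the (uniquely extended) isomorphism to get $\tilde\phi(\moc(A))=\moc(A')$, deduce (2) by passing to the quotient $\moc(A)/A$ (the paper phrases this as a commutative diagram of short exact sequences), and obtain (3) by specializing (1) to $A'=A$. No gaps; the treatment of the non-canonical $\CompS(A)$ matches the paper's own caveat that only $\CompM(A)$ is uniquely defined.
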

\begin{proof}
(1) Let $\phi\colon A\longrightarrow A'$ be a Poisson algebra 
isomorphism. A Poisson algebra map $f\colon A\longrightarrow K$,
where $K$ is a Poisson field, is  injective if 
and only if $f'\colon =f\circ \phi^{-1}\colon A'\longrightarrow K$ is 
 injective.
By definition, $\moc_{f}(A)=\moc_{f'}(A')$. Hence,
$\phi$ induces an isomorphism from $f^{-1}(\moc_{f}(A))
\to (f')^{-1}(\moc_{f'}(A))$. By Lemma \ref{xxlem4.4},
$\phi$ induces an isomorphism from $\moc(A)\to 
\moc(A')$.

(2) The assertion follows from the following 
diagram
$$\begin{CD}
0@>>> A @>>> \moc(A) @>>> \CompM(A) @>>> 0\\
@. @V \stackrel{\phi} {\cong} VV @VV \stackrel{\phi} {\cong} V @VVV @.\\
0@>>> A' @>>> \moc(A') @>>> \CompM(A') @>>> 0.\\
\end{CD}$$

(3) The assertions follow from part (1).
\end{proof}

\begin{lemma}
\label{xxlem4.7}
Let $A$ be a Poisson subalgebra of $B$ that is a 
Poisson domain and  $\partial$ be a weak 
dimension function. Then, there is a natural inclusion of Poisson algebras
$\moc(A)\longrightarrow \moc(B)$ such that the following 
diagram commute
\begin{equation}
\label{E4.7.1}\tag{E4.7.1}
\begin{CD}
0@>>> A @>>> \moc(A) @>>> \CompM(A) @>>> 0\\
@. @VVV @VVV @VVV @.\\
0@>>> B @>>> \moc(B) @>>> \CompM(B) @>>> 0\\
\end{CD}
\end{equation}
where the map $\CompM(A)\longrightarrow \CompM(B)$ is induced by the left commutative square.
\end{lemma}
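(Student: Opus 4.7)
The plan is to build the map via fraction fields and use Lemma~\ref{xxlem4.4}. First, I would observe that the inclusion $A\hookrightarrow B$ composes with $B\hookrightarrow Q(B)$ to give an injective Poisson morphism $f\colon A\to Q(B)$, which extends uniquely to an injective Poisson morphism $f\colon Q(A)\to Q(B)$. By Lemma~\ref{xxlem4.4}, $\moc(A)=f^{-1}(\moc_{f}(A))$, where $\moc_{f}(A)$ is computed inside the Poisson field $Q(B)$.

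The key conceptual step is then to notice that $\moc(B)$, viewed as a subalgebra of $Q(B)$, belongs to $\Phi(Q(B))$ and contains $B$, hence contains $f(A)$. By the very definition of $\moc_{f}(A)$ as the intersection of all integrally closed Poisson subalgebras of $Q(B)$ containing $f(A)$, this forces $\moc_{f}(A)\subseteq \moc(B)$. Composing, one obtains an injective Poisson map
\[
\moc(A)\;\xrightarrow{\;f\;}\; \moc_{f}(A)\;\hookrightarrow\;\moc(B),
\]
which is the desired inclusion.

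Next I would check that the left square in (\ref{E4.7.1}) commutes: both ways around send $a\in A$ to $f(a)\in\moc(B)$, where on the $B$--side this is $a\in B\hookrightarrow\moc(B)$ after the identification $A\subseteq B$. The map $\CompM(A)\to\CompM(B)$ is then the one induced on cokernels by the left commutative square; it is well defined because the image of $A$ in $\moc(B)$ lies in $B$, and it automatically makes the right square commute.

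I do not expect any real obstacle here: the argument is essentially the universal property of $\moc_f(A)$ as a minimal integrally closed Poisson enlargement of $f(A)$ inside $Q(B)$. The only mildly delicate point is to be careful about which ambient field one works in when invoking Lemma~\ref{xxlem4.4}, and to justify the extension of $f$ to $Q(A)\to Q(B)$, which uses only that $B$ (and hence $f(A)$) sits inside the Poisson field $Q(B)$. The role of the weak dimension function $\partial$ in the hypothesis does not appear to enter the construction itself; presumably it is carried along for compatibility with statements elsewhere in the paper.
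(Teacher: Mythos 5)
Your proposal is correct and follows essentially the same route as the paper: both work inside the Poisson field $K=Q(B)$, invoke Lemma~\ref{xxlem4.4} to identify $\moc(A)$ with $\moc_f(A)\subseteq K$, deduce $\moc_f(A)\subseteq\moc(B)$ from the minimality of the defining intersection (the paper phrases this as the intersection for $A$ running over a larger family, you phrase it as $\moc(B)$ being one of the integrally closed Poisson subalgebras containing $f(A)$ — the same observation), and then obtain the induced map on cokernels. Your remark that the weak dimension function $\partial$ plays no role is also accurate; it is not used in the paper's proof either.
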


\begin{proof}
We have an inclusion map
$f\colon A\hookrightarrow B\hookrightarrow Q(B)=: K$. Let $f'$ be the inclusion
of $B$ into  $K$. Then both $\moc_{f}(A)$ 
and $\moc_{f'}(B)$ are Poisson subalgebras of
$K$ by the definition. Also by the definition, $\moc_{f}(A)$
is a Poisson subalgebra of $\moc_{f'}(B)$ both 
of which are considered as Poisson 
subalgebras of $K$. By Lemma \ref{xxlem4.4},
$\moc(A)=f^{-1}(\moc_{f}(A))=\moc_{f}(A)$,  since
$f$ is the inclusion. Similarly, $\moc(B)=
(f')^{-1}(\moc_{f'}(B))=\moc_{f'}(B)$. Therefore, we obtain the left commuting 
square of diagram \eqref{E4.7.1}. By the definition,
the top and bottom rows are short exact sequences.
Therefore, we have an induced map 
$\CompM(A)\longrightarrow \CompM(B)$ that makes the right square 
commutative.

\end{proof}

\subsection{Dixmier property vs Complementary spaces}
\label{xxsec4.3}
Let $n \geq 0$ be a nonnegative integer. We say that a Poisson algebra $A$ has the {\it  $n$-Dixmier property} if $A$ is $\kk[x_1,\ldots,x_n]$-Dixmier. Clearly, the $0$-Dixmier property is just the Dixmier property. 
An {\it  augmentation} of a $\kk$-algebra $A$ is a $\kk$-algebra homomorphism
\[
\pi \colon A \longrightarrow \kk
\]
whose restriction to $\kk \subseteq A$ is the identity.
The kernel of $\pi$, denoted by $\ker(\pi)$, is called the {\it  augmentation ideal}.
If $A$ is a Poisson algebra, we additionally require that $\pi$ be a Poisson homomorphism.

We say that $A$ is {\it  augmented $R$-Dixmier} if, for any Poisson morphism
\[
\phi \colon A \longrightarrow A \otimes R
\]
and any augmentation $\pi \colon R \to \kk$ such that the composition
\[
A \xrightarrow{\ \phi\ } A \otimes R \xrightarrow{\ \mathrm{Id}_A \otimes \pi\ } A\otimes \Bbbk \cong A
\]
is the identity map, it follows that
\[
\operatorname{im}(\phi)= A \otimes \kk.
\]

\begin{lemma}
\label{xxlem4.8}
Let $B$ be a Poisson domain that is integrally 
closed. Let $A$ be a Poisson subalgebra of $B$
such that {$B\subseteq Q(A)$ with} $\dim_{\Bbbk} B/A<\infty$.
\begin{enumerate}
\item[(1)]
Suppose $B$ has the Dixmier property. Then so does
$A$.
\item[(2)]
Let $R$ be a Poisson domain such that $B\otimes R$
is an integrally closed domain. If $B$ satisfies $R$-Dixmier property,
so does $A$.
\item[(3)]
Let $n$ be a positive integer. Suppose $B$ has 
$n$-Dixmier property. Then so does $A$.
\item[(4)]
Suppose that $B$ is an augmented $R$-Dixmier and that $B\otimes R$ is integrally closed. Then $A$ 
is augmented $R$-Dixmier.
\end{enumerate}
\end{lemma}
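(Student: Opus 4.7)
The plan is to observe that, under the hypotheses $A\subseteq B\subseteq Q(A)$ with $\dim_{\kk} B/A<\infty$ and $B$ integrally closed, $B$ coincides with the invariant $\moc(A)$, and then to use this identification to extend each Poisson morphism out of $A$ canonically to one out of $B$. The structural reduction comes first: since $B/A$ is finite-dimensional, $B$ is a finitely generated $A$-module, hence integral over $A$; combined with $B$ being integrally closed inside $Q(A)=Q(B)$, Lemma~\ref{xxlem4.5} gives $\moc(A)=B$. This identification will be used repeatedly.

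For part (1), I would start with an injective Poisson morphism $f\colon A\to A$, extend it to $\tilde f\colon Q(A)\to Q(A)$, and check that $\tilde f(B)\subseteq B$: any $b\in B$ satisfies a monic equation over $A$, so $\tilde f(b)$ satisfies a monic equation over $\tilde f(A)\subseteq A$, whence $\tilde f(b)$ is integral over $A$, lies in $Q(A)$, and therefore lies in $B$. Thus $\tilde f$ restricts to an injective Poisson endomorphism $g\colon B\to B$, which is bijective by the Dixmier property of $B$. A finite-codimension count then closes the argument: since $g(A)=f(A)\subseteq A$ and $g(B)=B$,
\[
\dim_{\kk} A/g(A) \;=\; \dim_{\kk} B/g(A)\;-\;\dim_{\kk} B/A \;=\;0,
\]
so $f(A)=A$.

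Part (2) follows the same template but over $R$. Given injective $f\colon A\to A\otimes R$, extend to $\tilde f\colon Q(A)\to Q(A\otimes R)$ (possible because $A\otimes R\subseteq B\otimes R$ is a domain), and use the identical integrality argument with $B\otimes R$ playing the role of $B$ to land $\tilde f(B)$ inside $B\otimes R$. The $R$-Dixmier hypothesis on $B$ then gives $\mathrm{im}(\tilde f)=B\otimes\kk$. Intersecting with $A\otimes R$ inside $B\otimes R$ yields $\mathrm{im}(f)\subseteq (B\otimes\kk)\cap(A\otimes R)=A\otimes\kk$, and identifying $B\otimes\kk$ with $B$, the dimension count from (1) upgrades this to $\mathrm{im}(f)=A\otimes\kk$. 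Part (3) is then immediate from (2) with $R=\kk[x_1,\ldots,x_n]$, since polynomial rings over integrally closed domains are integrally closed.

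For part (4), the construction of $\tilde f\colon B\to B\otimes R$ is identical, and I would additionally verify that the augmented condition is inherited. The composition $(\mathrm{Id}_B\otimes\pi)\circ\tilde f$ is a Poisson endomorphism of $B$ whose restriction to $A$ is $\mathrm{Id}_A$; by the uniqueness of extension of ring maps to the fraction field $Q(A)=Q(B)$, such an endomorphism must extend to $\mathrm{Id}_{Q(A)}$ and therefore equal $\mathrm{Id}_B$. Hence the augmented $R$-Dixmier hypothesis for $B$ applies and gives $\mathrm{im}(\tilde f)\subseteq B\otimes\kk$; the augmented condition on $f$ itself then pins down $f(a)=a\otimes 1$ for every $a\in A$. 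The only delicate step across all four parts is the containment $\tilde f(B)\subseteq B$ (respectively $B\otimes R$), but this is handled uniformly by $\moc(A)=B$ together with the integrality of $B$ over $A$; everything else reduces to a finite-codimension dimension count and, in (4), a one-line uniqueness argument for the augmentation.
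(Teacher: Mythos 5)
Your proposal is correct and takes essentially the same route as the paper: identify $B=\moc(A)$ (Lemma \ref{xxlem4.5}, using that finite codimension makes $B$ integral over $A$), extend the given morphism from $A$ to $B$ (resp.\ into $B\otimes R$) via integral closure, apply the corresponding Dixmier-type property of $B$, and finish using $\dim_{\kk}B/A<\infty$. The only cosmetic differences are in the bookkeeping: the paper routes the extension through Lemma \ref{xxlem4.7} and closes part (1) with the five lemma on $0\to A\to B\to \CompM(A)\to 0$, while you extend through fraction fields and use a direct codimension count, and in part (4) you conclude from the augmentation itself rather than re-invoking part (1).
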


\begin{proof}
(1) Let $\sigma: A\longrightarrow A$ be an injective Poisson 
endomorphism of $A$. By Lemma \ref{xxlem4.7}(1),
we have a commutative diagram
\begin{equation}
\notag%\label{E2.5.1}\tag{E2.5.1}
\begin{CD}
0@>>> A @>>> B=\moc(A) @>>> \CompM(A) @>>> 0\\
@. @V\sigma VV @V\sigma' VV @VVg V @.\\
0@>>> A @>>> B=\moc(A) @>f >> \CompM(A) @>>> 0\\
\end{CD}
\end{equation}
where $\sigma'$ denotes the induced Poisson
endomorphism of $B$. Since $\sigma'$ and $f$ are surjective, $g$ is surjective. By hypothesis, $\CompM(A)$ is finite-dimensional.
So $g$ is bijective. By the $5$-lemma, $\sigma$
is an automorphism as required. Therefore, Theorem \ref{xxthm0.9} holds. 

(2) Let $f\colon A\longrightarrow A\otimes R$ be an 
injective Poisson algebra morphism. 
By Lemma \ref{xxlem4.7}(1), there is
an induced Poisson algebra morphism
$f'\colon \moc(A)\longrightarrow \moc(A\otimes R)$. Since both $B$ and $B\otimes R$ are integrally closed with $A\subseteq B\subseteq Q(A)$ and $A\otimes R\subseteq B\otimes R\subseteq Q(A\otimes R)$, 
we have that $B=\moc(A)$ and $B\otimes R=\moc(A\otimes R)$  by Lemma \ref{xxlem4.5}. Thus $f'$ is 
the induced injective Poisson algebra morphism $f'\colon \moc(A)=B \longrightarrow \moc(A\otimes R)=B\otimes R$.
Since $B$ is $R$-Dixmier, we have ${\rm{im}}(f')=
B\otimes \Bbbk$. This implies that 
${\rm{im}}(f)\subseteq (A\otimes R)
\cap (B\otimes \Bbbk)=
A\otimes \Bbbk$. By part (1), $A$ is
Dixmier. Hence 
${\rm{im}}(f)=A\otimes \Bbbk$.

(3) This is a special case of part (2),  indeed  $B \otimes \kk [x_1, \cdots, x_n] \cong  B [x_1, \cdots, x_n[$ is an integrally closed domain. 

(4) Let $\pi: R\to \Bbbk$ be an augmentation of 
$R$. Suppose $A\xrightarrow{\phi} A\otimes R\xrightarrow{Id_A\otimes \pi} A$ is the identity.
Using Lemma \ref{xxlem4.7} and the fact that 
$\moc (A) = B$ and $\moc (A\otimes R) = B \otimes R$, the map $\phi$ induces a Poisson algebra morphism
$\sigma: B\to B\otimes R$. So we have a commutative
diagram
$$\begin{CD}
A@>\phi >> A\otimes R @>>> A\\
@VVV @VVV @VVV @.\\
B@>\sigma>> B\otimes R @>>> B.
\end{CD}$$
Since the composition of the top row is the 
identity, the composition of the bottom row is also
the identity. Since $B$ is augmented $R$-Dixmier,
${\rm{im}}(\sigma)=B\otimes \Bbbk$. Restricted to $A$,
we have ${\rm{im}}(\phi)\subseteq A\otimes \Bbbk$. 
Since $A$ is Dixmier by part (1), we obtain ${\rm{im}}(\phi)= A\otimes \Bbbk$ as required.
\end{proof}

\subsection{Field extension}

\label{xxsec4.4}
It is natural to ask about the Dixmier property under field extensions, and in this short subsection, we address this question with the following proposition.

\begin{proposition}
\label{xxprop4.9}
Let $A$ and $R$ be Poisson domains over the base field $\Bbbk$. Let $B=A\otimes K$ 
for some commutative field $K$
with a trivial Poisson bracket.
So $B$ can be viewed as a Poisson
algebra over $K$.
\begin{enumerate}
\item[(1)]
If $B$ is Dixmier as a Poisson $K$-algebra, then $A$ is Dixmier as a Poisson $\Bbbk$-algebra.
\item[(2)]
If $B$ is $R\otimes K$-Dixmier as a Poisson $K$-algebra,
then $A$ is $R$-Dixmier as a Poisson $\Bbbk$-algebra.
\item[(3)]
Suppose $\Aut_{\Bbbk}(A)=\Aut_{K}(B)$ and $B$ is Dixmier as a Poisson $K$-algebra. Then $A$ is $K$-Dixmier 
as a Poisson $\Bbbk$-algebra. 
\end{enumerate}
\end{proposition}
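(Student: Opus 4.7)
The uniform strategy for all three parts is: take an injective Poisson $\Bbbk$-morphism with source $A$, extend it by tensoring with $K$ to a Poisson $K$-morphism with source $B=A\otimes K$, apply the Dixmier hypothesis on $B$, and pull the resulting conclusion back to $A$. For part~(1), I would start from an injective Poisson $\Bbbk$-endomorphism $f\colon A\to A$ and form the base change $\tilde{f}:=f\otimes\mathrm{Id}_K\colon B\to B$. This $\tilde{f}$ is injective (by $\Bbbk$-freeness of $K$) and Poisson over $K$ (because $K$ carries the trivial bracket), so the Dixmier hypothesis on $B$ makes it bijective. Fixing a $\Bbbk$-basis $\{k_\alpha\}$ of $K$ with $k_0=1$ and writing a preimage of $a\otimes 1$ under $\tilde{f}$ as $\sum_\alpha a_\alpha\otimes k_\alpha$, the injectivity of $f$ forces $a_\alpha=0$ for $\alpha\neq 0$ and $f(a_0)=a$, so $f$ is surjective.

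For part~(2), I apply the same maneuver to an injective Poisson $\Bbbk$-morphism $f\colon A\to A\otimes R$. Under the canonical identification $(A\otimes R)\otimes K\cong B\otimes_K(R\otimes K)$, the base change $\tilde{f}:=f\otimes\mathrm{Id}_K$ becomes an injective Poisson $K$-morphism $B\to B\otimes_K(R\otimes K)$, and the $(R\otimes K)$-Dixmier hypothesis yields $\mathrm{im}(\tilde{f})\subseteq B\otimes_K K$; inside $A\otimes R\otimes K$ this is the subspace $A\otimes\Bbbk\otimes K$. Intersecting with $A\otimes R\otimes\Bbbk$ gives $\mathrm{im}(f)\subseteq A\otimes\Bbbk$. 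For the reverse inclusion, note that $(R\otimes K)$-Dixmier on $B$ implies $B$ is Dixmier (compose any injective Poisson $K$-endomorphism of $B$ with the inclusion $B\hookrightarrow B\otimes_K(R\otimes K)$), so by part~(1) $A$ is Dixmier and $f\colon A\to A\otimes\Bbbk\cong A$ is bijective.

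For part~(3), given an injective Poisson $\Bbbk$-morphism $f\colon A\to B$, the universal property of $A\otimes_\Bbbk K$ produces a unique Poisson $K$-algebra endomorphism $\tilde{f}\colon B\to B$ extending $f$, characterized by $\tilde{f}(a\otimes k)=f(a)(1\otimes k)$. Once $\tilde{f}$ is known to be injective, the Dixmier hypothesis on $B$ forces $\tilde{f}\in\Aut_K(B)$; the identification $\Aut_\Bbbk(A)=\Aut_K(B)$ then writes $\tilde{f}=\sigma\otimes\mathrm{Id}_K$ for some $\sigma\in\Aut_\Bbbk(A)$, whence $f(a)=\tilde{f}(a\otimes 1)=\sigma(a)\otimes 1\in A\otimes\Bbbk$, and surjectivity of $\sigma$ yields $\mathrm{im}(f)=A\otimes\Bbbk$.

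The main obstacle is establishing injectivity of $\tilde{f}$ in part~(3). Unlike parts~(1) and~(2), $\tilde{f}$ is not a pure base change but involves the multiplication map of $B$, and its injectivity amounts to the linear disjointness of $f(A)$ and $K$ over $\Bbbk$ inside $B$. A first reduction is that $K$ is the Poisson center of $B$ and $f$ is Poisson, so any $a_0\in A$ with $f(a_0)\in K$ must be Poisson central in $A$; the rigidity encoded by $\Aut_\Bbbk(A)=\Aut_K(B)$ should then rule out a Poisson center of $A$ strictly larger than $\Bbbk$ (otherwise a Poisson-central element of $A$ would admit $K$-valued shift automorphisms of $B$ not coming from $\Aut_\Bbbk(A)$), yielding $f(A)\cap K=\Bbbk$. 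Promoting this to full linear disjointness, and thereby to injectivity of $\tilde{f}$, is the remaining delicate step.
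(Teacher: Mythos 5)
Your treatments of parts (1) and (2) are correct and essentially identical to the paper's: base change along $-\otimes_\Bbbk K$, apply the Dixmier (resp.\ $R\otimes K$-Dixmier over $K$) hypothesis to $\phi\otimes \mathrm{Id}_K$, and descend. Your extra bookkeeping (a $\Bbbk$-basis of $K$, the intersection $(A\otimes R\otimes\Bbbk)\cap(A\otimes\Bbbk\otimes K)=A\otimes\Bbbk\otimes\Bbbk$, and the remark that $R\otimes K$-Dixmier implies Dixmier so that part (1) upgrades the inclusion $\operatorname{im}(f)\subseteq A\otimes\Bbbk$ to an equality) only fills in steps the paper leaves implicit.

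Part (3) is where the proposal is incomplete, as you acknowledge: the injectivity of the multiplicative $K$-linear extension $\tilde f\colon B\to B$, $\tilde f(a\otimes k)=f(a)(1\otimes k)$, is never established, and everything after that point hinges on it. This is a genuine gap rather than a routine verification. For a general injective Poisson $\Bbbk$-morphism $f\colon A\to A\otimes K$ the extension can honestly fail to be injective: with $A=\Bbbk[x]$ carrying the zero bracket and $K=\Bbbk(t)$, the map $f(x)=1\otimes t$ is injective but $\tilde f(x\otimes 1-1\otimes t)=0$. Moreover, the intermediate goal you propose, $f(A)\cap K=\Bbbk$ via Poisson-centrality, is strictly weaker than the linear disjointness you actually need: with $A=\Bbbk[x,y]$ (zero bracket), $f(x)=x\otimes 1$, $f(y)=x\otimes t$, the map $f$ is injective and $f(A)\cap K=\Bbbk$, yet $\tilde f(y\otimes 1-x\otimes t)=0$. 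So the centrality reduction cannot close the gap by itself.

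For comparison, the paper's proof constructs the same map, writing it as $(\mathrm{Id}_A\otimes m_K)\circ\iota\circ(\phi\otimes\mathrm{Id}_K)$ with $\iota$ the associativity isomorphism, and argues injectivity by exhibiting the $\Bbbk$-linear section $s(\lambda)=\lambda\otimes 1$ of $m_K$ and claiming that $(\mathrm{Id}_A\otimes s)\circ(\mathrm{Id}_A\otimes m_K)$ restricts to the identity on the image of $\iota\circ(\phi\otimes\mathrm{Id}_K)$; the hypothesis $\Aut_{\Bbbk}(A)=\Aut_K(B)$ is then used exactly as you use it, to write $\tilde f=\pi\otimes K$ and conclude $\operatorname{im}(f)=A\otimes\Bbbk$. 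Be aware that the claimed identity in that injectivity step is precisely the delicate point: the first example above ($f(x)=1\otimes t$) shows it cannot hold for an arbitrary injective $\phi$, so any complete argument must bring additional structure (e.g.\ the automorphism-group hypothesis) into the injectivity proof itself. In short, your skeleton for (3) coincides with the paper's, but the missing injectivity argument is the substantive content of that part, and your sketched route does not supply it.
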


\begin{proof}
(1) Let $\phi\colon A\longrightarrow A$ be an injective Poisson $\Bbbk$-algebra endomorphism and let $\sigma=\phi\otimes K$. Then 
$\sigma$ is an injective Poisson $K$-algebra endomorphism of 
$B$. Since $B$ is Dixmier as a Poisson $K$-algebra, $\sigma$ is surjective. This forces $\phi$ to be surjective.

(2) Let $\phi\colon A\longrightarrow A\otimes R$ be an injective Poisson $\Bbbk$-algebra morphism and 
let $\sigma=\phi\otimes K$. We consider $\sigma$ as a Poisson $K$-algebra morphism from $B$ to  
$(A\otimes R)\otimes K=(A\otimes K) \otimes_{K} (R\otimes K)=B\otimes_{K} S$,  where $S=R\otimes K$. 
Since $B$ is $S$-Dixmier as a Poisson $K$-algebra, we have that ${\rm{im}}(\sigma)=B=A\otimes K$. 
This forces that ${\rm{im}}(\phi)=A\otimes \Bbbk$ inside $A\otimes R$.
Thus $A$ is 
$R$-Dixmier as a Poisson $\Bbbk$-algebra.

(3) By part (1), $A$ is Dixmier as a Poisson $\Bbbk$-algebra. Let 
\(\phi \colon A \longrightarrow A \otimes K\) be an injective Poisson 
$\Bbbk$-algebra morphism. Define $\sigma$ to be the composition
\[
B := A \otimes K
\xrightarrow{\phi \otimes \mathrm{Id}_K} (A \otimes K) \otimes K
\cong A \otimes (K \otimes K)
\xrightarrow{\mathrm{Id}_A \otimes m_K}
A \otimes K =: B.
\]
Let us show that $\sigma$ is injective. Since  the canonical associativity isomorphism
\[
\iota\colon (A \otimes K)\otimes K \xrightarrow{\ \cong\ } A \otimes (K \otimes K)
\]
is bijective, so the composition $\iota \circ (\phi \otimes \mathrm{Id}_K)$ is injective.

Next, consider the multiplication map \(m_K \colon K \otimes K \to K\). It admits a $\Bbbk$-linear section
\[
s\colon K \longrightarrow K \otimes_{\Bbbk} K,\qquad s(\lambda)=\lambda \otimes 1,
\]
since \(m_K(\lambda \otimes 1)=\lambda\). Tensoring with $A$ yields a $\Bbbk$-linear map
\[
\mathrm{Id}_A \otimes s \colon A \otimes K \longrightarrow A \otimes (K \otimes K),
\]
which, for every \(m\) in the image of \(\iota \circ (\phi \otimes \mathrm{Id}_K)\), satisfies
\[
(\mathrm{Id}_A \otimes s)\circ (\mathrm{Id}_A \otimes m_K)(m)=m.
\]
Thus $\sigma$ is injective.
Since $B$ is Dixmier as a Poisson $K$-algebra, $\sigma$ is an automorphism. 
By the hypothesis, $\Aut_{\Bbbk}(A)=\Aut_{K}(B)$, there is an 
$\pi\in \Aut_{\Bbbk}(A)$ such that $\sigma=\pi\otimes K$.
For every $a\in A$,
$$\pi(a)\otimes 1=\sigma(a\otimes 1)=
(Id_A\otimes m_K)\circ (\phi\otimes Id_K)(a\otimes 1)=
(Id_A\otimes m_K)(\phi(a)\otimes 1)=\phi(a)$$
which implies that ${\rm{im}}(\phi)\subseteq A\otimes \Bbbk$.
Since $A$ is Dixmier as a Poisson $\Bbbk$-algebra, 
${\rm{im}}(\phi)= A\otimes \Bbbk$ as required.
\end{proof}

\subsection{Poisson algebras with trivial automorphism groups}
\label{xxsec4.5} In this subsection, using some results from this section, we introduce certain Poisson algebras with trivial Poisson automorphism groups. Let $A$ be a graded commutative domain that is a Poisson algebra. For example $A=\Bbbk[x_1,\cdots,x_n]$
as in Example \ref{xxexa1.7}. Next, we will define 
an infinite  class of Poisson algebras that are between $A$ and
$A(d)$ for some $d\geq 4$.

\begin{example}
\label{xxexa4.10}
Let $d\geq 4$ and let $\zeta\in A$ 
which is of the form $\zeta=\sum_{i=2}^{d-1} \zeta_i$ where $\zeta_i$ is  either zero or homogeneous of degree $i$. Suppose $\zeta^2\in A(d)$. Then 
$A(d,\zeta):=A(d)+\Bbbk \zeta$ is Poisson subalgebra of $A$ containing $A(d)$. 
\end{example}

\begin{theorem}
\label{xxthm4.11}
Let $A(d,\zeta)$ be defined as in Example \ref{xxexa4.10}. Suppose that 
\begin{enumerate}
\item[(a)]
Every automorphism of $A$ is graded.
\item[(b)]
For every automorphism $Id_{A}\neq \sigma\in \Aut_{Poi.alg}(A)$,
$\sigma(\zeta)\not\in \Bbbk \zeta$.
\end{enumerate}
Then $\Aut_{Poi.alg}(A(d,\zeta))$ is trivial. 
\end{theorem}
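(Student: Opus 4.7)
The strategy is to lift every Poisson automorphism of $A(d,\zeta)$ to an automorphism of the ambient algebra $A$ via the complement machinery developed in Section~4, and then exhaust hypotheses (a) and (b) through a graded-piece comparison applied to $\zeta$. First I would check that $A$ is integral over $A(d,\zeta)$: any homogeneous element $a\in A_i$ with $i\geq 1$ satisfies $a^d\in A_{id}\subseteq A_{\geq d}\subseteq A(d)\subseteq A(d,\zeta)$, so $a$ is a root of the monic polynomial $t^d-a^d$ over $A(d,\zeta)$; since sums and products of integral elements are integral, all of $A$ is integral over $A(d,\zeta)$. Assuming (in line with the motivating case $A=\Bbbk[x_1,\ldots,x_n]$) that $A$ is integrally closed in $Q(A)$, Lemma~\ref{xxlem4.5} then gives $\moc(A(d,\zeta))=A$. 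By Lemma~\ref{xxlem4.6}(1) any $\sigma\in\Aut_{Poi.alg}(A(d,\zeta))$ extends to a Poisson automorphism $\tilde\sigma\in\Aut_{Poi.alg}(A)$ whose restriction to $A(d,\zeta)\subseteq A$ equals $\sigma$; in particular $\tilde\sigma(A(d,\zeta))=A(d,\zeta)$.

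Hypothesis (a) now forces $\tilde\sigma$ to be graded, so $\tilde\sigma(A_i)=A_i$ for every $i$. Since $\tilde\sigma(\zeta)\in A(d,\zeta)=A(d)+\Bbbk\zeta$, I may write
\[
\tilde\sigma(\zeta)=b+c\,\zeta,\qquad b\in A(d)=\Bbbk\oplus A_{\geq d},\ c\in\Bbbk.
\]
On the other hand, $\tilde\sigma(\zeta)=\sum_{i=2}^{d-1}\tilde\sigma(\zeta_i)$ is a sum of homogeneous elements whose degrees lie in $\{2,\ldots,d-1\}$, while $b$ contributes only in degree $0$ and degrees $\geq d$. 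Because $d\geq 4$ these two degree ranges are disjoint, so matching graded components forces $b=0$ and $\tilde\sigma(\zeta_i)=c\,\zeta_i$ for each $i\in\{2,\ldots,d-1\}$. Summing, $\tilde\sigma(\zeta)=c\,\zeta\in\Bbbk\zeta$.

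The contrapositive of hypothesis (b) then forces $\tilde\sigma=\mathrm{Id}_A$, and consequently $\sigma=\mathrm{Id}_{A(d,\zeta)}$. The main obstacle is the opening identification $\moc(A(d,\zeta))=A$: one must make sure the closure is exactly $A$ and not a strictly larger integrally closed ring, which is precisely where integral closedness of $A$ enters. Once that identification is in hand the rest is straightforward degree bookkeeping, and the precise cutoff $d\geq 4$ together with the shape $\zeta=\sum_{i=2}^{d-1}\zeta_i$ is exactly what keeps the decomposition $A(d,\zeta)=A(d)\oplus\Bbbk\zeta$ graded in disjoint degree ranges and makes the comparison collapse.
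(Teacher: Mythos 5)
Your argument is correct and is essentially the paper's own proof: both extend a Poisson automorphism of $A(d,\zeta)$ to an automorphism of the ambient algebra $A$ via the closure machinery (Lemmas \ref{xxlem4.5} and \ref{xxlem4.6}), invoke hypothesis (a) to make that extension graded, compare graded components (degrees $2,\dots,d-1$ versus $\{0\}\cup[d,\infty)$) to force $\tilde\sigma(\zeta)\in\Bbbk\zeta$, and then conclude from hypothesis (b). The only difference is that you spell out the identification $\moc(A(d,\zeta))=A$ --- via integrality of $A$ over $A(d,\zeta)$ and the (implicit) integral closedness of $A$ --- a step the paper's one-line proof passes over silently.
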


\begin{proof}
Let $\sigma$ be an automorphism of $A(d,\zeta)$ that is not the identity.
By Lemma \ref{xxlem4.6}(3), $\sigma$ extends to
an automorphism of $A$. Since $\sigma$ is graded and $\sigma(\zeta)\not\in \Bbbk \zeta$, it cannot be an automorphism of $A(d,\zeta)$, contradiction.
The assertion follows.
\end{proof}

There are many examples of such $A$ and $\zeta\in A$
such that $\Aut_{Poi.alg}(A(d,\zeta))$ is trivial. Basically, a generic $\zeta$ should work. Below is an explicit
example. 

\begin{example}
\label{xxexa4.12}
Let $\Omega=x^{n}+y^n+z^{n}$ for some 
$n\geq 5$. Let $A=A_{\Omega}$ as defined in 
Example \ref{xxexa0.1}. By \cite[Theorem 8.2]{HTWZ1}, every automorphism of $A$ 
is graded and so condition (a) of 
Theorem \ref{xxthm4.11} holds. Now let $d\geq 8$ and let $\zeta=x^{d-1}+y^{d-1}+y^{d-2}+z^{d-2}+z^{d-3}+x^{d-3}
+x^{d-4}+y^{d-4}$. 
Indeed, $\Aut_{Poi.alg} (A)$ is described explicitly in \cite[Theorem 8.2 and Proposition 8.4]{HTWZ1}. As a result, 
one can easily check that $\sigma(\zeta)\not\in \Bbbk \zeta$. By Theorem \ref{xxthm4.11}, $\Aut_{Poi.alg}(A(d,\zeta))$ is trivial.
\end{example}

\subsection*{Acknowledgments}
%The authors thank referees for their careful reading and
%and valuable comments.
The authors would like to thank the organizers of the workshop “Poisson Geometry and Artin–Schelter Regular Algebras-Hangzhou”, during which this joint work was initiated. Nazemian was supported by the Austrian Science Fund (FWF), grant P 36742; Y.-H. Wang was supported by the NSF of China (No. 11971289);
and Zhang was partially supported by the US National Science Foundation (No. DMS-2302087).

\end{document}